\documentclass[11pt]{amsart}

%\usepackage{showkeys}

%auto-ignore
%this ensures the arXiv doesn't try to start TeXing here.
 
 \usepackage{amsmath,amsthm,amsfonts,amssymb,verbatim}
 \usepackage{url}
 \usepackage{graphicx}
 \usepackage[all]{xy}
\usepackage{tikz}\usetikzlibrary{matrix,arrows}
 %\usepackage{fourier}
 %\usepackage{utopia}
 %usepackage{euler}
 \usepackage{wrapfig}
\usepackage{picins}
 \usepackage{pinlabel}
 \usepackage{subfigure}
 \usepackage{xfrac}
 
 \setlength{\oddsidemargin}{0.25in} 
\setlength{\evensidemargin}{0.25in}

%this might be a hack: was -0.25
%\setlength{\topmargin}{-0.75in}
%\setlength{\topmargin}{-0.25in}

\setlength{\textwidth}{6in}

 \setlength{\parindent}{0pt}
 \setlength{\parskip}{10pt}

\def\co{\colon\thinspace}

\newcommand{\KHT}{\underset{\raisebox{3pt}{$\longleftarrow$}}{\operatorname{Kh}}}

\newcommand{\phalf}{\text{\sfrac{1}{2}}}
\newcommand{\mhalf}{\text{-\sfrac{1}{2}}}
\newcommand{\miniminus}{\text{-}}
\newcommand{\Br}{\mathbf{\Sigma}}
\newcommand{\boldA}{\mathbf{A}}
\newcommand{\boldK}{\mathbf{K}}

\newcommand{\bu}{\bullet}

\newcommand{\sC}{\mathcal{C}}

\newcommand{\odd}{\mathbb{Z}_{\operatorname{odd}}}

\newcommand{\bZ}{\mathbb{Z}}

\newcommand{\bF}{\mathbb{F}}

\newcommand{\half}{\textstyle\frac{1}{2}}

%\raisebox{4pt}{\xymatrix@C=-7pt@R=-14pt{1&& \\ &/& \\ &&2 }}

\newcommand{\into}{\hookrightarrow}

%---HF specific---

\newcommand{\HFhat}{\widehat{\operatorname{HF}}}

\newcommand{\HFK}{\widehat{\operatorname{HFK}}}

\newcommand{\Khred}{\widetilde{\operatorname{Kh}}{}}

\newcommand{\Fix}{\operatorname{Fix}}

\newcommand{\Sym}{\operatorname{Sym}}

\newcommand{\kh}{\varkappa}

%---THEOREMS---

\newtheorem{theorem}{Theorem}%[section]

\newtheorem{corollary}[theorem]{Corollary}
\newtheorem{proposition}[theorem]{Proposition}
\newtheorem{lemma}[theorem]{Lemma}
\newtheorem{conjecture}[theorem]{Conjecture}

\newtheorem*{namedtheorem}{\theoremname}
\newcommand{\theoremname}{testing}
\newenvironment{named}[1]{\renewcommand{\theoremname}{#1}
        \begin{namedtheorem}}
        {\end{namedtheorem}}

\theoremstyle{definition}
\newtheorem{definition}[theorem]{Definition}
\newtheorem{question}[theorem]{Question}
\newtheorem{remark}[theorem]{Remark}

%---GLYPHS---

\newcommand{\positive}
	{\raisebox{-2pt}{\includegraphics[scale=0.1]{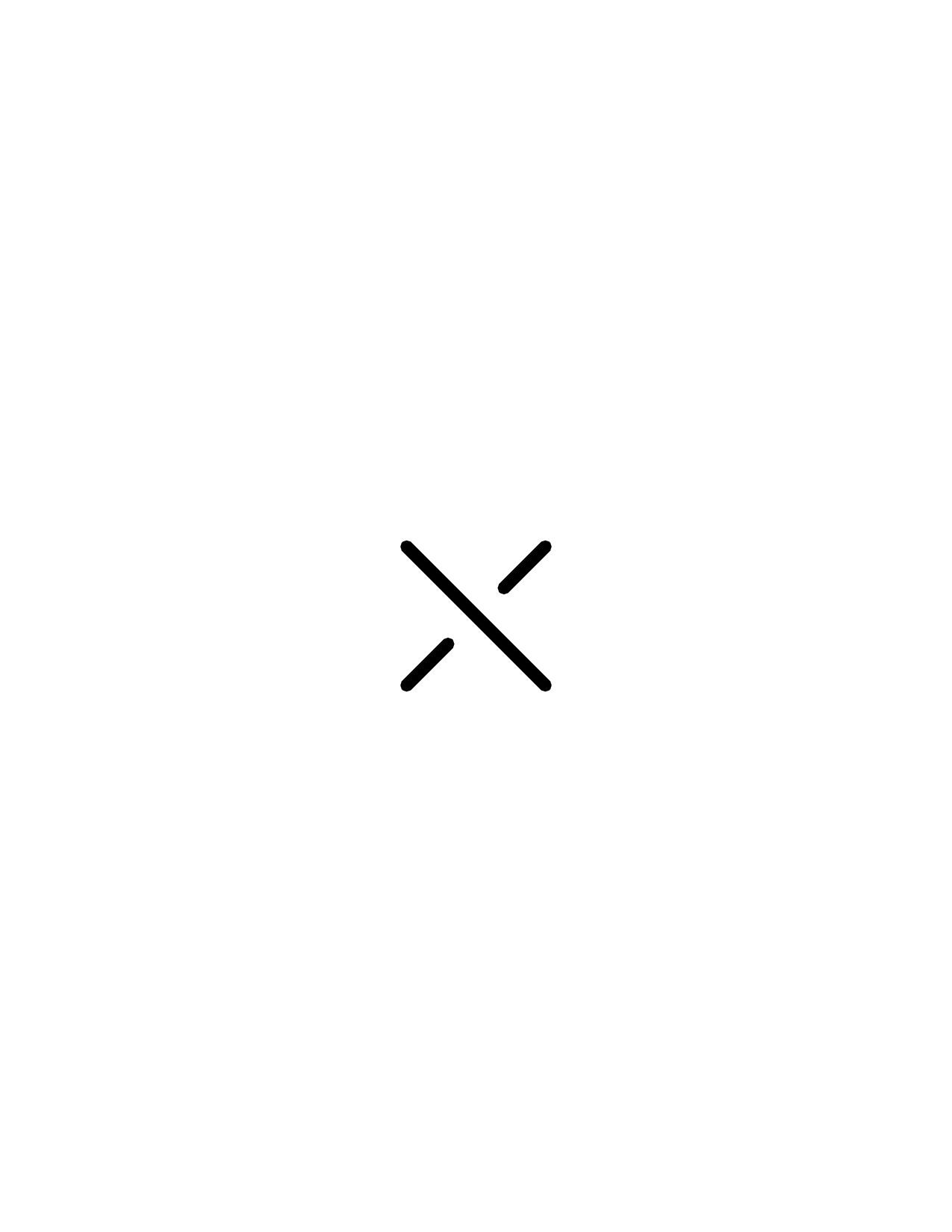}}}

\newcommand{\rightcross}
	{\raisebox{-2pt}
	{\includegraphics[scale=0.1]{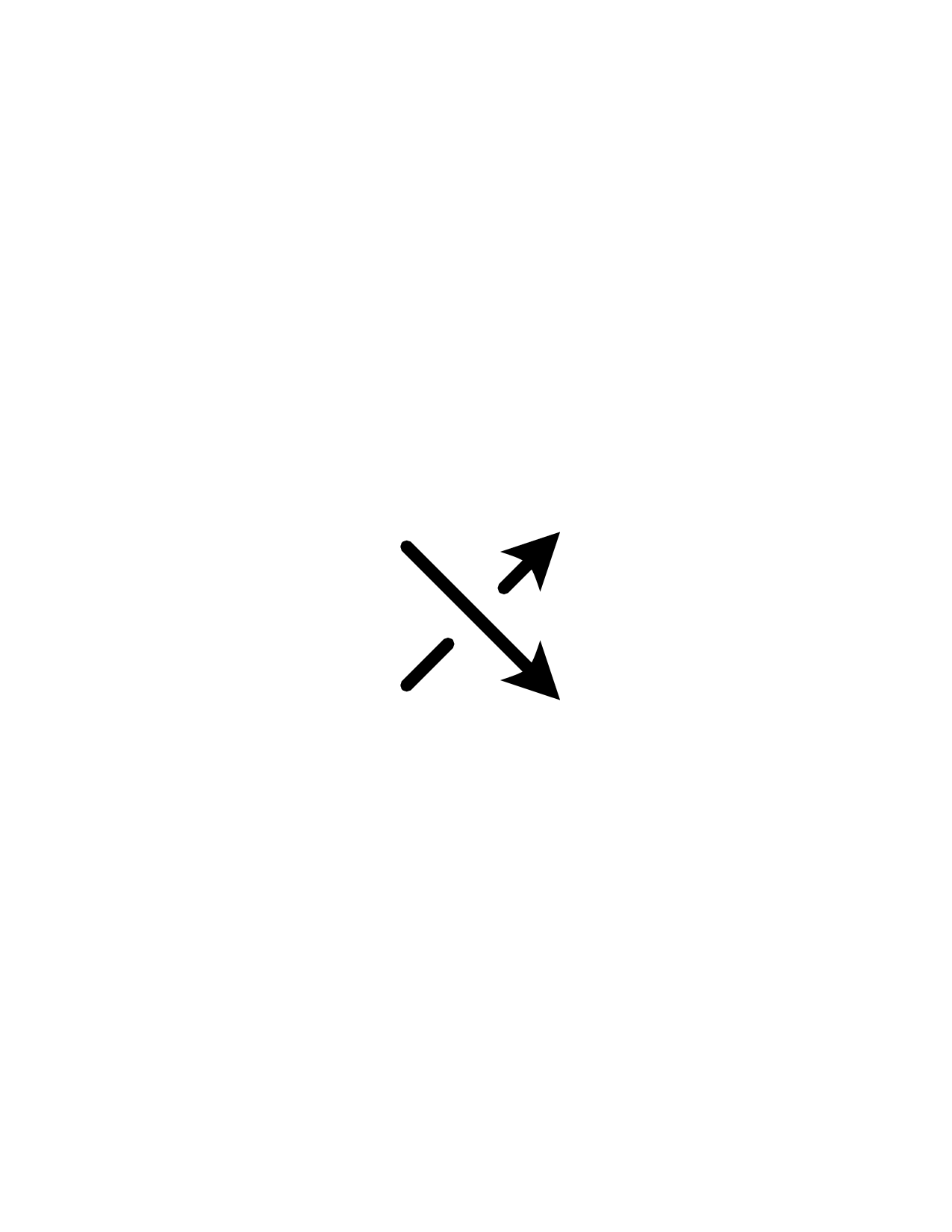}}}

\newcommand{\zero}
	{\raisebox{-2pt}
	{\includegraphics[scale=0.1]{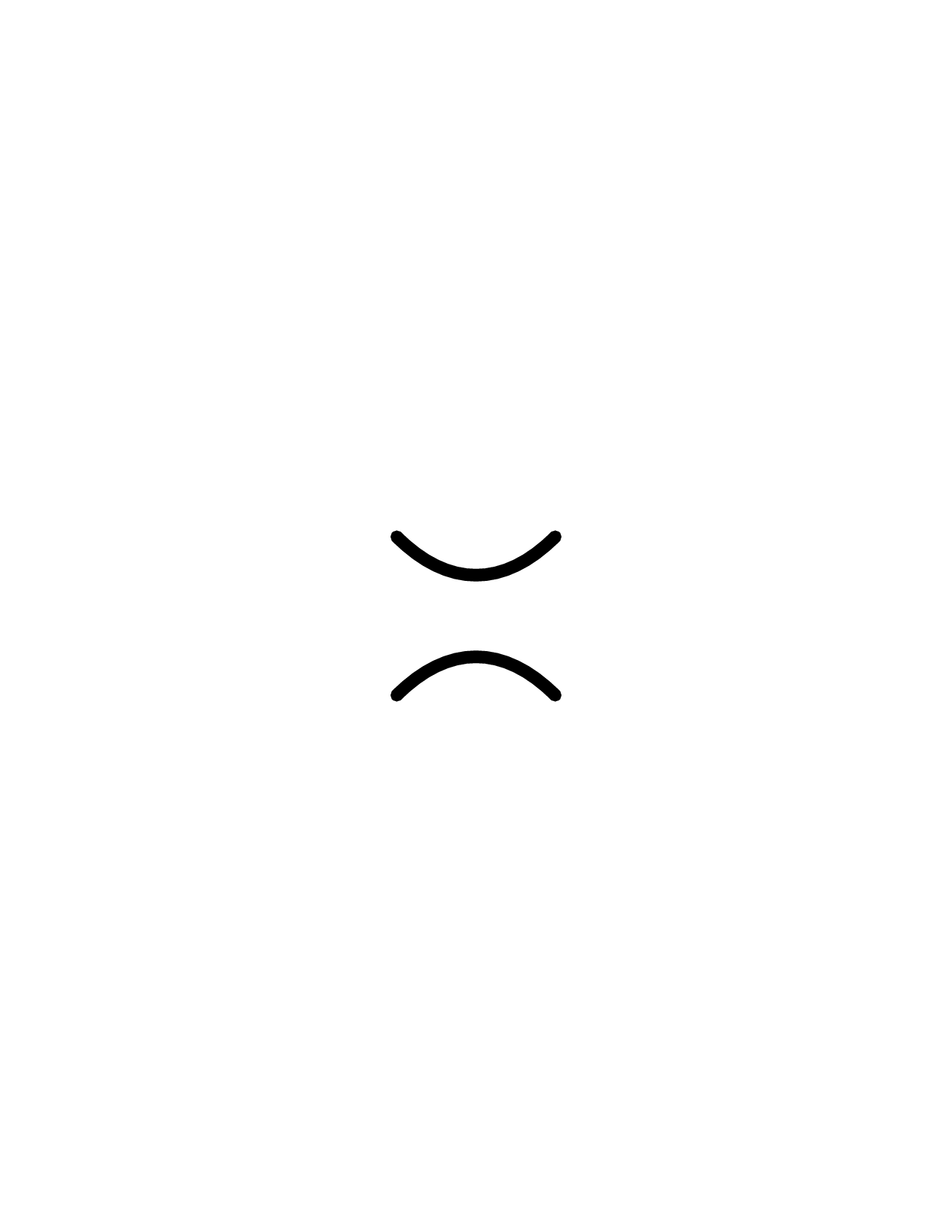}}}
\newcommand{\one}
	{\raisebox{-2pt}
	{\includegraphics[scale=0.1]{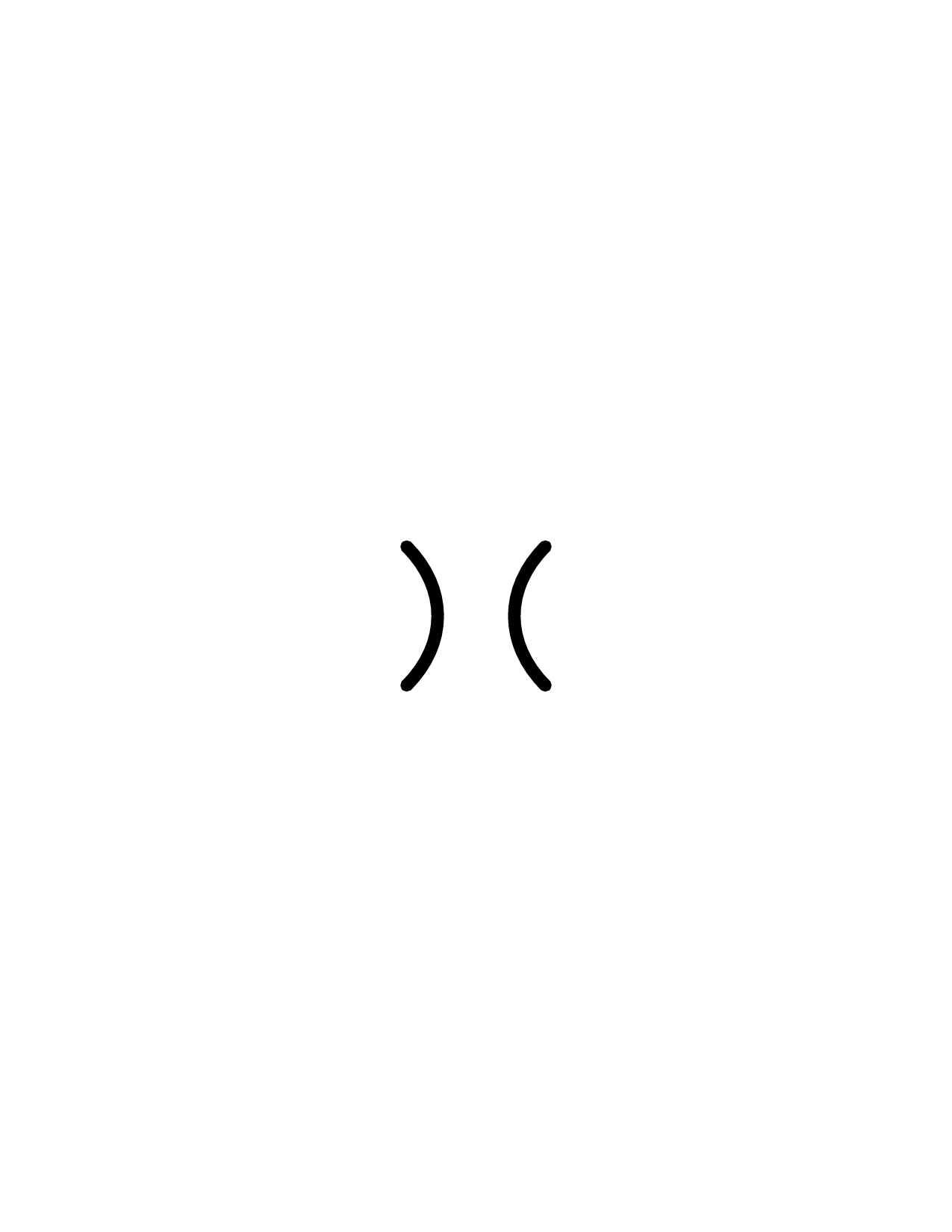}}}

\title[Khovanov homology and the symmetry group]{Khovanov homology\\ and the symmetry group of a knot}
\date{April 5, 2017}
%\date{\today}

\author[Liam Watson]{Liam Watson}
\thanks{Partially supported by a Marie Curie Career Integration Grant (HFFUNDGRP)}
\address{University of Glasgow, School of Mathematics and Statistics, Glasgow, United Kingdom \quad
{\it Current adress:} Universit\'e de Sherbrooke, D\'epartement de Math\'ematiques, Sherbrooke, Qu\'ebec, Canada
}
\email{liam.watson@usherbrooke.ca}
\urladdr{http://math.usherbrooke.ca/lwatson}

\begin{document}

\begin{abstract} 
We introduce an invariant of tangles in Khovanov homology by considering a natural inverse system of Khovanov homology groups. As application, we derive an invariant of strongly invertible knots; this invariant takes the form of a graded vector space that vanishes if and only if the strongly invertible knot is trivial. While closely tied to Khovanov homology --- and hence the Jones polynomial --- we observe that this new invariant detects non-amphicheirality in subtle cases where Khovanov homology fails to do so. In fact, we exhibit examples of knots that are not distinguished by Khovanov homology but, owing to the presence of a strong inversion, may be distinguished using our invariant. This work suggests a strengthened relationship between Khovanov homology and Heegaard Floer homology by way of two-fold branched covers that we formulate in a series of conjectures.
\end{abstract}

\maketitle

\hfill\begin{footnotesize}{\em To my grandfather, Karl Erik Snider.}\end{footnotesize}

The reduced Khovanov homology of an oriented link $L$ in the three-sphere is a bi-graded vector space $\Khred(L)$  for which the graded Euler characteristic $\sum_{u,q}(-1)^ut^q\dim\Khred^u_q(L)$ recovers the Jones polynomial of $L$ \cite{Khovanov2000,Khovanov2003}. This homological link invariant is known to detect the trivial knot. Precisely, Kronheimer and Mrowka prove that $\dim\Khred(K)=1$ if and only if $K$ is the trivial knot \cite{KM2011}. It remains an open problem to determine if the analogous detection result holds for the Jones polynomial.

Preceding the work of Kronheimer  and Mrowka are a range of applications of Khovanov homology in low-dimensional topology. Perhaps most recognised among these is Rasmussen's combinatorial proof of the Milnor conjecture by way of the $s$ invariant \cite{Rasmussen2010}. Other examples include Ng's bound on the Thurston-Bennequin number \cite{Ng2005}, Plamenevskaya's invariant of transverse knots \cite{Plamenevskaya2006}, and obstructions to finite fillings on strongly invertible knots due to the author \cite{Watson2012,Watson2013}. 

There are two features common to each of these applications. First, the quantity extracted from Khovanov homology is an integer (a particular grading \cite{Ng2005, Plamenevskaya2006,Rasmussen2010}, a count of a collection of gradings \cite{Watson2012}, or a dimension count \cite{KM2011});  and second, the quantity measured is not one that can be extracted from the Jones polynomial --- additional structure in Khovanov homology is essential in each case. The latter points to a clear advantage of Khovanov homology over the Jones polynomial, while the former suggests that further applications might be possible by considering more of the available structure. 

This paper is principally concerned with developing new applications of the graded information in Khovanov homology.

\subsection*{Tangle invariants in Khovanov homology} As with the Jones polynomial, tangle decompositions provide an approach to calculation and an enrichment of structure in Khovanov homology. For example, Bar-Natan's work \cite{Bar-Natan2005} gave rise to a considerable improvements in calculation speed \cite{Bar-Natan2007}. Bar-Natan works in a category of formal complexes of tangles up to homotopy (modulo certain topological relations). On the other hand,  Khovanov defines an algebraic invariant that is more natural in certain settings \cite{Khovanov2002} --- particularly in relation to two-fold branched covers and bordered Floer homology \cite{AGW2011}. There are a range of other generalized tangle invariants in Khovanov homology \cite{APS2006,GW2010,LP2009,Roberts2013-A,Roberts2013-D} and the state of the art is nicely summarized by Roberts \cite{Roberts2013-D}. 

\parpic[r]{\includegraphics[scale=0.5]{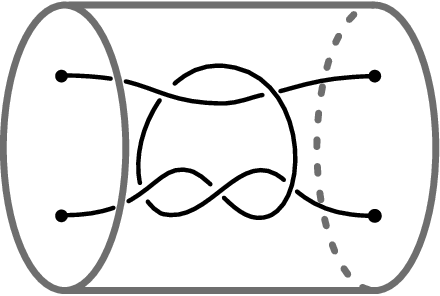}}
We introduce a new tangle invariant in Khovanov homology that is perhaps best aligned with the work of Grigsby and Wehrli \cite{GW2010}. The tangles considered are pairs $T=(B^3,\tau)$, where $\tau$ is a pair of properly embedded disjoint smooth arcs (together with a potentially empty collection of embedded disjoint closed components). These tangles will be endowed with a sutured structure (see Definition \ref{def:suture}, and compare the definitions of \cite[Section 5]{GW2010}), which may be thought of as a partition of the four points of $\partial\tau\subset\partial B^3$ into two pairs of points. Namely, we replace $B^3$ with the product $D^2\times I$ and distinguish the annular subset of the boundary $\partial D^2 \times I$ as the suture.  Equivalence of sutured tangles is up to homeomorphism of the pair $(B^3,\tau)\cong (D^2 \times I,\tau)$ fixing the suture. 

Given a representative $T$ for the homeomorphism class of a sutured tangle, there is a naturally defined link $T(i)$, for any integer $i$, by adding $i$ half-twists and then closing the tangle (as in Figure \ref{fig:closures}).  While these twists do not alter (the homeomorphism class of) the sutured tangle, the links $T(i)$ typically form an infinite family of distinct links. However, the Khovanov homology groups of the $T(i)$ are closely related, owing to the existence of a long exact sequence in Khovanov homology associated with a crossing resolution. In particular, there is a linear map $f_i\co \Khred(T(i+1))\to\Khred(T(i))$ for each integer $i$. Our object of study is the vector space defined by the inverse limit \[\KHT(T)=\varprojlim \Khred(T(i))\] as this yields an invariant of the underlying sutured tangle. It is not immediately apparent how this invariant might be related to other tangle invariants in the literature. While this is an interesting line of inquiry we will leave it for the moment and turn instead to an application.

\subsection*{The symmetry group of a knot} The symmetry group $\Sym(S^3,K)$ of a knot $K$  in $S^3$ is identified with the mapping class group of the knot exterior $M_K=S^3\smallsetminus\nu(K)$ \cite{Kawauchi1996}. 

\labellist
\small
\pinlabel $h$ at 52 4
\endlabellist
\parpic[r]{\includegraphics[scale=0.85]{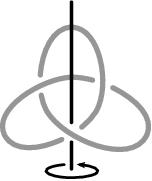}}
A strong inversion on a knot $K$  is an element $h\in\Sym(S^3,K)$ arising from an orientation preserving involution  on $S^3$ that reverses orientation on the knot $K$. The pair $(K,h)$ will be called a strongly invertible knot whenever $h\in\Sym(S^3,K)$ is a strong inversion (this notation follows Sakuma \cite{Sakuma1986}). Notice that, according to the Smith conjecture, the fixed point set of such an involution must be unknotted \cite{Waldhausen1969}. When restricting a strong inversion to $M_K$ we obtain an involution on the knot exterior with one dimensional fixed point set. The quotient of such an involution is a tangle; the arcs of the tangle are the image of the fixed point set in the quotient. Moreover, by choosing equivariant meridional  sutures on $\partial M_K$, the quotient tangle is naturally a sutured tangle $T_{K,h}$ for which the closure $T_{K,h}(\frac{1}{0})$ is the trivial knot. In fact, there is a one-to-one correspondence between pairs $(K,h)$ and sutured tangles $T$ for which $T(\frac{1}{0})$ is the trivial knot. Thus, to any strongly invertible knot $(K,h)$ we may associate a sutured tangle and the invariant $\KHT(T_{K,h})$. 

We will focus on a particular finite dimensional quotient  $\kh(K,h)$ of this inverse limit. This is a $\bZ$-graded vector space; there is a natural secondary relative grading admitting a lift to a $(\bZ\times\odd$)-graded vector space (see Section \ref{sec:bi}). 

Some remarks are in order. If $T=(D^2\times I,\tau)$, then the above construction shows that $M_K$ is the two-fold branched cover  of $D^2\times I$ with branch set $\tau$, denoted $\Br_T$. Moreover, this covering can be chosen so that it respects the sutured structures. It is important to note that $K$ may admit more than one strong inversion and hence it can be the case that $M_K$ may be realised as a two-fold branched cover of $D^2\times I$ in different ways. 

The appropriate notion of equivalence of strongly invertible knots is given by conjugacy in $\Sym(S^3,K)$; see Definition \ref{def:strong-knot}. As such, our invariant is best framed as an invariant of conjugacy classes. For example, if $K$ is hyperbolic it is known that $\Sym(S^3,K)$ is a subgroup of a dihedral group \cite{Riley1979,Sakuma1986} and $K$ admits 0, 1 or 2 strong inversions (up to conjugacy). Furthermore, in the case that there are 2 strong inversions, these must generate a cyclic or free involution \cite{Sakuma1986}. As a result, invariants of strong inversions (particularly, of strongly invertible knots) detect additional structure in the symmetry group. We note that, in general, a given knot admits finitely many strong inversions \cite{Kojima1983}.  

\subsection*{Results and conjectures} An interesting feature of this invariant of strong inversions from Khovanov homology is the following:

\begin{theorem}\label{thm:unknot}
Let $(K,h)$ be a strongly invertible knot. Then $\kh(K,h)=0$ if and only if $K$ is the trivial knot. \end{theorem}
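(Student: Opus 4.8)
The plan is to prove both directions by relating $\kh(K,h)$ to the reduced Khovanov homology of the closures $T_{K,h}(i)$, and ultimately to Kronheimer--Mrowka's unknot-detection theorem $\dim\Khred(K)=1\iff K$ unknotted. One direction should be essentially formal: if $K$ is the trivial knot, then I would exhibit an explicit model for the sutured tangle $T_{K,h}$ (the quotient of the standard involution on the unknot), compute the relevant closures $T_{K,h}(i)$, and check that the maps $f_i\co\Khred(T(i+1))\to\Khred(T(i))$ arrange so that the inverse limit $\KHT(T_{K,h})$---and hence its finite-dimensional quotient $\kh(K,h)$---vanishes. Here the point is that for the unknot all the $T(i)$ are unknots or simple unlinks whose reduced Khovanov homology is one-dimensional, and I expect the connecting maps in the skein long exact sequence to be isomorphisms (or to shift gradings) in a way that forces the limit to die.

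For the substantive direction, I would argue contrapositively: assuming $K$ is nontrivial I want $\kh(K,h)\neq 0$. The key input is that $T_{K,h}(\tfrac{1}{0})$ is the trivial knot by construction, and that $M_K$ is the two-fold branched cover $\Br_T$ respecting the sutured structure. The strategy is to use the long exact sequence relating $\Khred(T(i+1))$, $\Khred(T(i))$, and a fixed third term (the other resolution, which is independent of $i$), so that the behaviour of the inverse system is controlled by a single ``mapping cone'' worth of data. I would track how $\dim\Khred(T(i))$ grows (or stabilises) and show that nontriviality of $K$ forces the inverse limit to retain a nonzero graded piece---morally, the excess of $\dim\Khred(T(i))$ over the contribution coming from the unknotted closure $T(\tfrac{1}{0})$ survives in the limit precisely when $\dim\Khred(K)>1$.

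Concretely, I would set up the identification of $\kh(K,h)$ with a computable homological invariant (the stable cokernel/kernel of the $f_i$), use the exact triangle to express this in terms of the reduced Khovanov homology of $K$ itself via the unknotted closure, and then invoke $\dim\Khred(K)=1\iff K$ unknotted to convert the dimension statement into the trivial-knot statement. The half-twists used to form $T(i)$ do not change the homeomorphism type of the sutured tangle, so all gradings and maps are genuinely invariants of $(K,h)$; this is what licenses passing between the infinite family $\{T(i)\}$ and the single knot $K$.

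The hard part will be the nontriviality direction: establishing that the inverse limit does not collapse when $K$ is knotted. The difficulty is that inverse limits can vanish even when every term is nonzero (if the bonding maps $f_i$ are eventually zero or highly non-injective), so the real work is controlling the \emph{maps} $f_i$ rather than just the groups $\Khred(T(i))$. I expect this to require a careful stabilisation argument showing that, after finitely many twists, the images $\operatorname{im}f_i$ stabilise to a subspace whose dimension is governed by $\dim\Khred(K)-1$, together with the Kronheimer--Mrowka detection result to guarantee this quantity is strictly positive exactly when $K$ is nontrivial. Keeping track of the secondary $(\bZ\times\odd)$-grading may be needed to pin down which graded summand survives and to ensure the finite-dimensional quotient $\kh(K,h)$ (rather than merely the full inverse limit) is nonzero.
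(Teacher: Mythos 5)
Your first direction is essentially the paper's: one fixes the explicit quotient tangle of the standard inversion on the unknot and computes. But note a factual slip in your sketch: the closures are \emph{not} all unknots or unlinks with one-dimensional homology --- for this tangle $T(n)$ is the $(2,n-1)$-torus link, so $\dim\Khred(T(n))$ grows without bound in both directions. The vanishing does not come from the groups being small; it comes from the \emph{maps}. The paper computes the two skein triangles around $T(1)$ (the two-component unlink, with $T(0)$ and $T(2)$ unknots) with gradings and shows the composite $A_2\to A_1\to A_0$ is zero; consequently every coherent sequence $\{x_j\}$ has $x_j=0$ for all $j\le 0$, so $\boldA=\boldK$ and $\kh(K,h)=0$. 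Your remark that inverse limits can die because of the bonding maps is exactly the mechanism, so this half of your plan is repairable by direct computation.

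The substantive direction, however, has a genuine gap: your plan to identify the stable image of the $f_i$ with a quantity ``governed by $\dim\Khred(K)-1$'' and then invoke Kronheimer--Mrowka cannot work, because $\Khred(K)$ never appears in the inverse system. The third term of every skein triangle is $\Khred(T(\frac{1}{0}))\cong\bF$, the unknot closure, \emph{regardless} of $K$; the links $T(i)$ are quotient links whose relation to $K$ runs through the two-fold branched cover ($\Br_{T(n)}\cong S^3_n(K)$), and there is no known mechanism expressing $\Khred(T(i))$ in terms of $\Khred(K)$. Indeed your proposed dimension relation is contradicted by the paper's own computations: for the trefoil $\dim\kh(K,h)=4$ while $\dim\Khred(K)-1=2$, and for $10_{71}$ one has $\dim\kh=152$ while $\dim\Khred(10_{71})=77$ (alternating of determinant $77$). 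The paper's actual argument is Floer-theoretic: if $\kh(K,h)=0$ then for $n\gg0$ stability forces $\Khred(T(n))\cong\bF^n$ supported in a single $\delta$-grading, whence the Ozsv\'ath--Szab\'o spectral sequence gives $n=|H_1(S^3_n(K))|\le\dim\HFhat(S^3_n(K))\le\dim\Khred(T(n))=n$, so $S^3_n(K)$ is an L-space for all $|n|\gg0$; then $g(K)=\tau(K)$ and, applying the same to the mirror, $g(K)=\tau(K^*)=-\tau(K)$, forcing $g(K)=0$. So the unknot-detection input is the L-space/$\tau$/genus machinery of Heegaard Floer homology, not Kronheimer--Mrowka, whose theorem is neither used nor applicable since $\Khred(K)$ itself is never accessible from the data at hand.
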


Note that the trivial knot admits a standard strong inversion, and that $(K,h)$ is trivial if and only if $K$ is the trivial knot \cite{Marumoto1977}.  We consider some particular examples as a means of comparing $\kh(K,h)$ with $\Khred(K)$. These establish:

\begin{theorem}\label{thm:summary} (1) There exist distinct knots $K_1$ and $K_2$, each admitting a unique strong inversion $h_1$ and $h_2$ respectively, for which $\Khred(K_1) \cong \Khred(K_2)$ but $\kh(K_1,h_1)\ncong\kh(K_2,h_2)$ as graded vector spaces.\\[4pt]
(2) There exist non-amphicheiral knots $K$, admitting a unique strong inversion $h$, for which $\Khred(K)\cong\Khred(K^*)$ but $\kh(K,h)\ncong\kh(K^*,h)$ as graded vector spaces.
\end{theorem}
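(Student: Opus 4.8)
The plan is to read Theorem~\ref{thm:summary} as a pair of realization statements, each established by exhibiting explicit strongly invertible knots and computing the two invariants $\Khred$ and $\kh$ directly. All the content lies in producing examples where reduced Khovanov homology fails to separate the knots while $\kh$ succeeds, so the work divides into: (a) locating good candidate knots, (b) verifying the structural hypotheses (distinctness, uniqueness of the strong inversion, and for (2) chirality together with the required symmetry of $\Khred$), and (c) carrying out the computation of $\kh(K,h)$ from the associated sutured tangle.

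For part (1), the natural reservoir of distinct knots with isomorphic reduced Khovanov homology is the class of mutant pairs: since reduced Khovanov homology over $\bF_2$ is known to be invariant under mutation, any mutant pair automatically satisfies $\Khred(K_1)\cong\Khred(K_2)$. I would search among low-crossing mutant pairs for two knots each carrying a strong inversion, separating them by a coarser classical invariant (Alexander polynomial, signature, or hyperbolic volume) to guarantee $K_1\neq K_2$. For part (2), I would look for a chiral knot $K$ whose reduced Khovanov homology is symmetric under the grading involution $(u,q)\mapsto(-u,-q)$; since the mirror satisfies $\Khred^u_q(K^*)\cong\Khred^{-u}_{-q}(K)$, this symmetry forces $\Khred(K)\cong\Khred(K^*)$, while chirality itself is certified by an invariant not extractable from the Jones polynomial, such as the signature or the HOMFLY polynomial.

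With candidates in hand I would pin down the symmetry data. For hyperbolic examples, Sakuma's description of $\Sym(S^3,K)$ as a subgroup of a dihedral group bounds the number of strong inversions to $0$, $1$, or $2$ up to conjugacy, so uniqueness can be read off once a single strong inversion is exhibited on a symmetric diagram; this both guarantees that $\kh(K,h)$ is well defined independent of choices and makes the comparison between $(K_1,h_1)$ and $(K_2,h_2)$, respectively $(K,h)$ and $(K^*,h)$, canonical. The heart of the argument is then the computation of $\kh$: for each example I would pass to the quotient sutured tangle $T_{K,h}$, form the twisted closures $T(i)$, compute their reduced Khovanov homology together with the connecting maps $f_i\co\Khred(T(i+1))\to\Khred(T(i))$ coming from the unoriented skein exact sequence, and then identify the inverse limit $\KHT(T_{K,h})=\varprojlim\Khred(T(i))$ and its finite-dimensional quotient $\kh(K,h)$, keeping careful track of both the $\bZ$-grading and the secondary $(\bZ\times\odd)$-grading.

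I expect the computation of $\kh$ to be the main obstacle. Unlike $\Khred$ of a fixed knot, $\kh$ requires controlling an entire inverse system: one must verify that the maps $f_i$ stabilize for $i\ll 0$, so that the limit is finite dimensional and extractable, and one must normalize the gradings consistently across the family $T(i)$ so that the final comparison of graded vector spaces is meaningful. A secondary difficulty is purely one of search, namely ensuring that the chosen examples genuinely have coincident $\Khred$ yet distinct $\kh$, which is exactly what makes the theorem's subtle cases precise; but once suitable examples are fixed, the inequivalence of $\kh(K_1,h_1)$ from $\kh(K_2,h_2)$, and of $\kh(K,h)$ from $\kh(K^*,h)$, reduces to a finite graded-dimension comparison.
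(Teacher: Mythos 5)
Your overall architecture---exhibit explicit strongly invertible knots, verify uniqueness of the inversion via hyperbolicity and the dihedral bound, then compute $\kh$ through the inverse system---is exactly the paper's, which deduces Theorem \ref{thm:summary} as an immediate corollary of Theorem \ref{thm:ten} by computing $\kh$ for the quasi-amphicheiral knots $10_{48}$, $10_{71}$, $10_{104}$. But your proposal stops at a search strategy and never produces the examples, which for a pure existence statement is the entire content; and the one concrete mechanism you do commit to, for part (1), fails. You propose to draw $K_1, K_2$ from mutant pairs and to certify $K_1\neq K_2$ by ``a coarser classical invariant (Alexander polynomial, signature, or hyperbolic volume)''---but all three are mutation invariants (volume by Ruberman), so none can separate a mutant pair; distinctness would have to come from elsewhere (knot groups, tabulation, knot Floer genus). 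Moreover the smallest mutant knot pair (Conway/Kinoshita--Terasaka) has 11 crossings, and you give no reason such a pair should each admit a unique strong inversion, nor any evidence that $\kh$ differs on one. The paper obtains $\Khred(K_1)\cong\Khred(K_2)$ without mutation: $10_{71}$ and $10_{104}$ are distinct alternating knots, hence thin, so $\Khred$ is determined by the Jones polynomial and signature, which coincide for this pair; uniqueness of the strong inversion is established by ruling out free periods (Hartley) and cyclic periods (Kodama--Sakuma) and then invoking hyperbolicity via Theorem \ref{thm:dihedral}.

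For part (2) there is a second slip: your candidates are chosen so that $\Khred$ is symmetric under $(u,q)\mapsto(-u,-q)$, and for the thin knots in play this forces vanishing signature (this is precisely the paper's notion of quasi-amphicheirality, Definition \ref{def:Qamph}), so the signature cannot then certify chirality, contrary to your suggestion; the chirality of $10_{48},10_{71},10_{104}$ is instead taken from the tables (Perko). Finally, your plan does not isolate the step that actually closes the argument: since $\dim\kh(10_{71})=\dim\kh(10_{104})=152$, the comparison in (1) genuinely requires the $\bZ$-grading, and the non-amphicheirality in (2) is read off from Proposition \ref{prp:mirror}, $\kh^u(K,h)^*\cong\kh^{-u}(K,h)$, together with Proposition \ref{prp:amph}: one checks that the computed graded vector spaces lack the symmetry $u\mapsto -u$. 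With explicit examples substituted for the search heuristics and the distinctness criterion repaired, your computational pipeline (stabilisation of the $f_i$, consistent grading normalisation across the closures $T(i)$) matches the paper's.
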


In fact we show more: Of all knots with 10 or fewer crossing for which the Jones polynomial and the signature (in combination) fail to detect non-amphicheirality, there is an involution present and $\kh$ detects non-amphicheirality; see Theorem \ref{thm:ten}. 

Sakuma introduces and studies a similar invariant $\eta(K,h)$ \cite{Sakuma1986}. This is an Alexander-like polynomial invariant that, like $\kh(K,h)$, vanishes for the trivial knot. Unlike $\kh(K,h)$, Sakuma's invariant  also vanishes for a range of non-trivial strongly invertible knots, including any amphicheiral $(K,h)$ for which $h$ is unique up to conjugacy \cite[Proposition 3.4]{Sakuma1986}. In this context it is also worth mentioning the work of Couture defining a Khovanov-like homology associated with links of divides \cite{Couture2009}. This gives rise to a homological invariant of strongly invertible knots $(K,h)$, though it is not clear how this is related to $\kh(K,h)$ (or if the two are related at all); see Remark \ref{rmk:Couture}.

Our work points to some conjectures about the behaviour of the vector space $\kh(K,h)$ and, most notably, its relationship with Heegaard Floer homology. In particular, there is evidence suggesting that the family of strongly invertible L-space knots --- knots admitting a Dehn surgery with simplest-possible Heegaard Floer homology --- might be characterised by way of Khovanov homology by appealing to $\kh$; see Conjecture \ref{con:L-space}.

\subsection*{Organization} Background on Khovanov homology is collected in Section \ref{sec:Kh} with particular attention paid to our grading conventions and their relationship to other conventions in the literature; this is summarized in Figure \ref{fig:gradings}. The invariants of sutured tangles and of strongly invertible knots are defined in Section \ref{sec:inv}; the invariant of strongly invertible knots $\kh$ is the main focus of the remainder of the paper. In Section \ref{sec:properties} some basic properties of $\kh$ are established including the non-vanishing result (Theorem \ref{thm:unknot}). In Section \ref{sec:examples} we give some preliminary examples. This includes properties of $\kh$ for torus knots (Theorem \ref{thm:torus}) and highlights the invariant's ability to distinguish strong inversions; compare Question \ref{qst:seperate}. Section \ref{sec:amph} considers the problem of obstructing amphicheirality and establishes  Theorem  \ref{thm:summary}. Section \ref{sec:conjectures} presents three conjectures.  The invariant $\kh$ is graded, but also comes with a natural relative bi-grading that can be useful in calculations. The paper concludes with a construction of a lift of the latter to an absolute bi-grading in Section \ref{sec:bi}. 
 
\section{Khovanov homology} \label{sec:Kh}

Khovanov's categorification of the Jones polynomial gives rise to a (co)homological invariant of oriented links in the three-sphere with the Jones polynomial arising as a graded Euler characteristic \cite{Khovanov2000}. For the purpose of this paper it will be sufficient to work with the reduced Khovanov homology $\Khred(L)$  \cite{Khovanov2003} taking coefficients in the field $\bF=\bZ/2\bZ$ and giving rise to a $(\bZ\times\frac{1}{2}\bZ)$-graded vector space. Letting $u$ denote the integer (homological) grading and $q$ denote the half-integer (quantum) grading, the invariant satisfies $\Khred(U)\cong\bF$ supported in grading $(u,q)=(0,0)$ (where $U$ denotes the trivial knot) and, more generally, \[V_L(t)=\sum_{q\in\bZ}b_qt^q\] where each coefficient $b_q=\chi_u\big(\Khred_q(L)\big)=\sum_{u\in\bZ}(-1)^u\dim\Khred^u_q(L)$ is the Euler characteristic in a fixed quantum grading. The symmetry in the Jones polynomial $V_{L^*}(t)=V_L(t^{-1})$, where $L^*$ denotes the mirror image of $L$, is realised in the bi-grading of Khovanov homology as $\Khred^u_q(L^*)\cong\Khred^{-u}_{-q}(L)$.
 
Note that the quantum grading used here is half the grading considered elsewhere (compare \cite{Khovanov2000}, for example) and results in half-integer powers for links with an even number of components. 

There is a third natural grading to consider: Setting $\delta=u-q$ records diagonals of slope 1 in the $(u,q)$-plane and gives rise to a $\frac{1}{2}\bZ$-grading on $\Khred(L)$. This may be relaxed to a relative $\bZ$-grading (compare \cite{Watson2012}). It is an absolute $\bZ$-grading for knots and we have that \[\det(K) = |\chi_\delta\Khred(K)|\]
where $\chi_\delta\big(\Khred(K)\big)=\sum_{\delta\in\bZ}(-1)^\delta\dim\Khred^\delta(K)$ (ignoring the quantum grading).

\begin{figure}
\begin{tikzpicture}[>=latex] 

\draw [gray] (-5,3.5) -- (5,3.5);
\draw [gray] (-5,3) -- (5,3);
\draw [gray] (-5,2.5) -- (5,2.5);
\draw [gray] (-5,2) -- (5,2);
\draw [gray] (-5,1.5) -- (5,1.5);
\draw [gray] (-5,1) -- (5,1);
\draw [gray] (-5,0.5) -- (5,0.5);
\draw [gray] (-5,0) -- (5,0);

\draw [gray] (4,-5) -- (4,4.5);
\draw [gray] (3.5,-5) -- (3.5,4.5);
\draw [gray] (3,-5) -- (3,4.5);
\draw [gray] (2.5,-5) -- (2.5,4.5);
\draw [gray] (2,-5) -- (2,4.5);
\draw [gray] (1.5,-5) -- (1.5,4.5);
\draw [gray] (1,-5) -- (1,4.5);
\draw [gray] (0.5,-5) -- (0.5,4.5);
\draw [gray] (0,-5) -- (0,4.5);

\draw [gray] (-3,-0.5) -- (-3,4.5);
\draw [gray] (-3.5,-0.5) -- (-3.5,4.5);
\draw [gray] (-4,-0.5) -- (-4,4.5);

\draw [gray] (-0.5,-1.5) -- (5,-1.5);
\draw [gray] (-0.5,-2) -- (5,-2);
\draw [gray] (-0.5,-2.5) -- (5,-2.5);

\draw [gray] (-0.5,-3.5) -- (5,-3.5);
\draw [gray] (-0.5,-4) -- (5,-4);

%u,q

\node at (4.5,-0.125) {\footnotesize{$u$}};
\node at (-0.125,4) {\footnotesize{$q$}};

\draw[ultra thick,->] (0,-0.5) -- (0,4);
\draw[ultra thick,->] (-0.5,0) -- (4.5,0);

\node at (0.25,-4.25) {\footnotesize{-$7$}};
\node at (0.75,-4.25) {\footnotesize{-$6$}};
\node at (1.25,-4.25) {\footnotesize{-$5$}};
\node at (1.75,-4.25) {\footnotesize{-$4$}};
\node at (2.25,-4.25) {\footnotesize{-$3$}};
\node at (2.75,-4.25) {\footnotesize{-$2$}};
\node at (3.25,-4.25) {\footnotesize{-$1$}};
\node at (3.75,-4.25) {\footnotesize{$0$}};

\node at (-4.25,0.25) {\footnotesize{-$10$}};
\node at (-4.25,0.75) {\footnotesize{-$9$}};
\node at (-4.25,1.25) {\footnotesize{-$8$}};
\node at (-4.25,1.75) {\footnotesize{-$7$}};
\node at (-4.25,2.25) {\footnotesize{-$6$}};
\node at (-4.25,2.75) {\footnotesize{-$5$}};
\node at (-4.25,3.25) {\footnotesize{-$4$}};

\node at (0.25,0.25) {$\bu$};
\node at (0.75,0.75) {$\bu$};
\node at (1.75,1.75) {$\bu$};

\node at (1.25,0.75) {$\bu$};
\node at (2.25,1.75) {$\bu$};
\node at (2.75,2.25) {$\bu$};
\node at (3.75,3.25) {$\bu$};

%q,\delta

\node at (-2,-0.125) {\footnotesize{$\delta\!=\!u\!-\!q$}};
\node at (-4.125,4) {\footnotesize{$q$}};

\node at (-3.25,-0.25) {\footnotesize{$4$}};
\node at (-3.75,-0.25) {\footnotesize{$3$}};

\draw[ultra thick,->] (-4,-0.5) -- (-4,4);
\draw[ultra thick,->] (-4.5,0) -- (-2.5,0);

\node at (-3.75,0.25) {$\bu$};
\node at (-3.75,0.75) {$\bu$};
\node at (-3.75,1.75) {$\bu$};

\node at (-3.25,0.75) {$\bu$};
\node at (-3.25,1.75) {$\bu$};
\node at (-3.25,2.25) {$\bu$};
\node at (-3.25,3.25) {$\bu$};

%u,\delta

\draw[ultra thick,->] (-0.5,-1.5) -- (4.5,-1.5);
\node at (4.5,-1.675) {\footnotesize{$u$}};
\draw[ultra thick,->] (0,-1) -- (0,-3);
\node at (-0.125,-3.0125) {\footnotesize{$\delta$}};

\node at (-0.25,-2.25) {\footnotesize{$4$}};
\node at (-0.25,-1.75) {\footnotesize{$3$}};

\node at (0.25,-1.75) {$\bu$};
\node at (0.75,-1.75) {$\bu$};
\node at (1.75,-1.75) {$\bu$};

\node at (1.25,-2.25) {$\bu$};
\node at (2.25,-2.25) {$\bu$};
\node at (2.75,-2.25) {$\bu$};
\node at (3.75,-2.25) {$\bu$};

%u

\draw[ultra thick,->] (-0.5,-4) -- (4.5,-4);
\node at (4.5,-4.125) {\footnotesize{$u$}};

\node at (0.25,-3.75) {$\bu$};
\node at (0.75,-3.75) {$\bu$};
\node at (1.75,-3.75) {$\bu$};

\node at (1.25,-3.75) {$\bu$};
\node at (2.25,-3.75) {$\bu$};
\node at (2.75,-3.75) {$\bu$};
\node at (3.75,-3.75) {$\bu$};

\node at (-2.5,-2.5) {\includegraphics[scale=0.75]{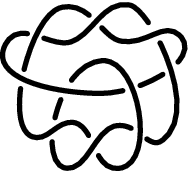}};
\end{tikzpicture}
\caption{A gradings glossary: $u$ (cohomological), $q$ (quantum) and $\delta$ (diagonal) gradings on the vector space $\Khred(K)$. For the purpose of illustration we have considered the torus knot $K\simeq 10_{124}$. Each $\bu$ denotes a copy of the vector space $\bF$.  Notice that $V_K(t)=t^{-4}+t^{-6}-t^{-10}$ in this case. For reference, the conventions in the upper left correspond to those of \cite{Watson2012,Watson2013}.}\label{fig:gradings}\end{figure}

These conventions are consistent with \cite{MO2007,Watson2012,Watson2013} and are summarized for a particular example in Figure \ref{fig:gradings}. Two different gradings on Khovanov homology will be used in this paper:
\begin{itemize}
\item[(1)] A finite dimensional $\bZ$-graded vector space $\Khred(L)=\Khred^u(L)$ by considering the homological grading $u$ (and ignoring both $q$ and $\delta$);  \\
\item[(2)] a finite dimensional $(\bZ\times\frac{1}{2}\bZ)$-graded vector space $\Khred(L)=\Khred^{u,\delta}(L)$ by considering the homological grading $u$ and the diagonal grading $\delta$. We will generally relax the half-integer grading to an integer grading at the expense of passing from an absolute grading to a relative grading in the second factor.   \\
\end{itemize}

With these conventions in place, we review the long exact sequence associated with a crossing resolution. Let $[\cdot,\cdot]$ be an operator on the bi-grading satisfying \[\Khred^{u,\delta}(L)[i,j]\cong\Khred^{{u-i},{\delta-j}}(L)\] and, given an orientation on (a fixed diagram of)  $L$, let $n_-(L)$ record the number of negative crossings according to a right-hand rule. Then given a distinguished positive crossing $\rightcross$ in a link diagram fix $c= n_-(\one)-n_-(\positive)$ for some choice of orientation on the affected strands of the new link associated with the resolution $\one$ at the crossing --- note that this is the resolution that does not inherit the orientation of the original link. With the abuse of notation $\Khred(L)=\Khred(\positive)$, $n_-(L)=n_-(\positive)$ (and so forth) understood, we have a long exact sequence 
\[\begin{tikzpicture}[>=latex] 
\matrix (m) [matrix of math nodes, row sep=1em,column sep=1em]
{ \Khred(\positive) & & \Khred(\zero)[0,-\half] \\
& \Khred(\one)[c+1,-\half c] & \\ };
\path[->,font=\scriptsize]
(m-1-1) edge node[auto] {$ f^+ $} (m-1-3)
(m-2-2.north west) edge (m-1-1.south east)
(m-1-3.south west) edge node[auto] {$ \partial $} (m-2-2.north east);
\end{tikzpicture}\]
The connecting homomorphism $\partial$ is graded of bi-degree $(1,1)$, that is, this map raises both  $u$- and  $
\delta$-grading by 1. Note that the long exact sequence is particularly well behaved with respect to the $\bZ$-grading:
\[\begin{tikzpicture}[>=latex] 
\matrix (m) [matrix of math nodes, row sep=1em,column sep=1em]
{ \Khred^u(\positive) & & \Khred^u(\zero) \\
& \Khred^{u-c-1}(\one) & \\ };
\path[->,font=\scriptsize]
(m-1-1) edge node[auto] {$ f^+ $} (m-1-3)
(m-2-2.north west) edge (m-1-1.south east)
(m-1-3.south west) edge node[auto] {$ \partial $} (m-2-2.north east);
\end{tikzpicture}\]
That is, this exact {\em triangle} encodes the long exact sequence 
\[\begin{tikzpicture}[>=latex] 
\matrix (m) [matrix of math nodes, row sep=1em,column sep=2em]
{ \cdots & \Khred^{u-c-1}(\one) & \Khred^u(\rightcross) & \Khred^u(\zero) & \Khred^{u-c}(\one) & \cdots\\ };
\path[->,font=\scriptsize]
(m-1-1) edge (m-1-2)
(m-1-2) edge (m-1-3)
(m-1-3) edge node[auto] {$ f^+ $} (m-1-4)
(m-1-4) edge node[auto] {$ \partial $} (m-1-5)
(m-1-5) edge (m-1-6);
\end{tikzpicture}\]
and the map $f^+\co \Khred(\positive) \to \Khred(\zero)$ preserves the cohomological grading.  

\section{Invariants from inverse limits}\label{sec:inv}

\subsection{An invariant of sutured tangles} \label{sec:tangles}
A tangle $T$ is the homeomorphism class of a pair $(B^3,\tau)$ where $B^3$ is a three-ball and $\tau$ is a pair of properly embedded arcs (together with a potentially empty collection of embedded circles). Consider the identification $B^3\cong D^2\times I$.
  
\begin{definition}\label{def:suture} A sutured tangle is a pair $(D^2\times I,\tau)$ where the four endpoints $\partial \tau$ are divided into two pairs confined to $D^2\times\{0\}$ and $D^2\times\{1\}$ respectively. The suture is the annulus $\partial D^2\times I$; equivalence of sutured tangles is up to homeomorphism of the pair $(D^2\times I,\tau)$ fixing the suture pointwise. A sutured tangle is called braid-like if the arcs $\tau$ admit an orientation that is inward at $D^2\times \{0\}$ and outward at $D^2\times \{1\}$. 
\end{definition}

An example is illustrated in Figure \ref{fig:suture}. All tangles considered in this work will be sutured. 

\begin{figure}[h]
\includegraphics[scale=0.75]{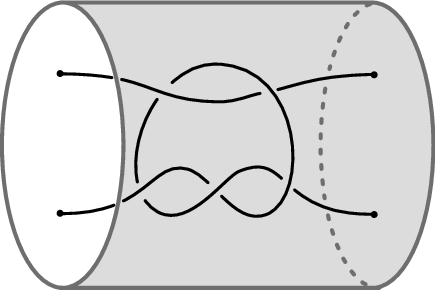}\qquad\qquad\qquad
\includegraphics[scale=0.75]{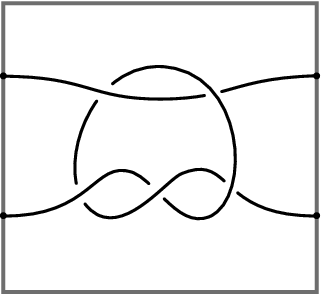}
\caption{An example of a sutured tangle with the suture $\partial D^2\times I$ shaded (left); and projection to $I\times I$ illustrating the convention for planar representations of sutured tangles (right). Note that this example is braid-like in the sense of Definition \ref{def:suture}.}\label{fig:suture}\end{figure}

Having fixed a representative for a sutured tangle $T$, there are two natural links obtained in the closure: $T(\frac{1}{0})$ joins the endpoints in $D^2\times \{0\}$ and $D^2\times \{1\}$ respectively without adding any new crossings; $T(0)$ joins each endpoint in  $D^2\times \{0\}$ to an endpoint in $D^2\times \{1\}$ without adding new crossings (see Figure \ref{fig:closures}).

\begin{figure}[h]
\labellist
\pinlabel $\underbrace{\phantom{aaaaaaaaaaaaia}}_n$ at 360 26
\large 
\pinlabel $T$ at 41 49
\pinlabel $T$ at 160 49
\pinlabel $T$ at 279 49
\pinlabel $\cdots$ at 375 49
\endlabellist
\includegraphics[scale=0.75]{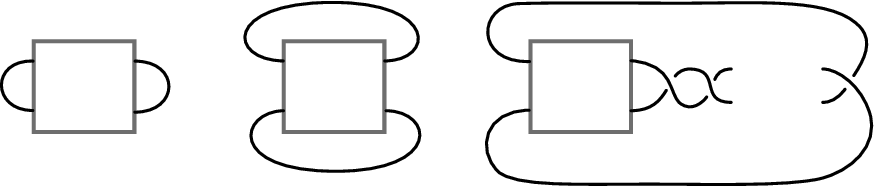}
\caption{Links $T(\frac{1}{0})$, $T(0)$ and $T(n)$ (from left-to-right) obtained via the closure of a fixed representative of a sutured tangle $T$. }\label{fig:closures}\end{figure}

More generally, note that the homeomorphism class of $T$ is not altered by adding horizontal twists, that is, a homeomorphism exchanging the pair of points  $\partial\tau|_{D^2\times\{1\}}\subset D^2\times\{1\}$. With the convention that the crossing $\positive$ is represented by $+1$, the obvious one-parameter family of tangle representatives gives rise to an infinite family of links $T(n)$ in the closure. Precisely, if $T^n$ is the representative obtained from $T$ by adding $n$ half-twists, then we have $T(n)=T^n(0)$ (see Figure \ref{fig:closures}). Rational tangle attachments other than these horizontal twists will not, in general, preserve the suture despite the fact that the equivalence class of the underlying (unsutured) tangle is preserved (see \cite{Watson2012}, for example, for details on this more standard notion of tangle equivalence). 

Fix a representative of a sutured tangle $T$ and, with the above conventions for closures in place, define $A_i=\Khred(T(i))$ for all  $i\in\bZ$. Then there is a natural inverse system provided by the maps $f_i\co A_{i+1}\to A_i$ in the long exact sequence. These are not necessarily graded maps since the resolved crossing need not be positive for a general tangle: At present we are distinguishing between $f_i$ and $f_i^+$ depending on compatibility of orientations at the resolved crossing. Note however that $f_i$ may always be regarded as a relatively graded map between the bi-graded vector spaces $A_{i+1}$ and $A_i$. Define \[\KHT(T) = \varprojlim A_i,\] the inverse limit of $\{A_i,f_i\}$ (see Weibel \cite{Weibel1994}, for example). We have by construction that:

\begin{proposition}
The vector space $\KHT(T)$ is an invariant of the sutured tangle $T$, up to isomorphism. Moreover, if $T$ is braid-like then $\KHT(T)$ is naturally $\bZ$-graded.  
\end{proposition}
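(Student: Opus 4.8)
The plan is to establish the two assertions in turn: first that the inverse limit $\KHT(T)=\varprojlim A_i$ depends only on the homeomorphism class of the sutured tangle $T$ (not on the chosen representative, nor on the chosen indexing of the family $T(i)$), and second that in the braid-like case the construction is compatible with the cohomological $\bZ$-grading. For the first part, I would recall that two representatives of the same sutured tangle differ by a sequence of moves fixing the suture, and that adding $n$ horizontal half-twists produces precisely the reindexed family $T^n(0)=T(n)$. The key point is that changing the representative merely shifts the index: a different choice of representative yields an inverse system $\{A'_i,f'_i\}$ that agrees with $\{A_{i+k},f_{i+k}\}$ for some integer $k$. Since a reindexing by a fixed shift is a cofinal (indeed bijective on the indexing set $\bZ$) morphism of inverse systems, I would invoke the standard fact that cofinal subsystems compute the same inverse limit (see Weibel), so $\varprojlim A'_i\cong\varprojlim A_i$ canonically.

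The maps $f_i\co A_{i+1}\to A_i$ themselves arise from the long exact sequence of a crossing resolution recalled in Section~\ref{sec:Kh}: resolving the distinguished half-twist crossing in $T^{i+1}$ recovers $T(i)$ on one side, so $f_i$ is the map $f^+$ (or $f$, depending on orientation compatibility) in that triangle, applied to the Khovanov homology of a link whose underlying diagram is determined by $T$. Because the long exact sequence is itself a link invariant, the maps $f_i$ are well-defined up to the isomorphisms identifying $\Khred(T(i))$ across representatives, and these isomorphisms intertwine the $f_i$. Hence the whole inverse system $\{A_i,f_i\}$ is an invariant of $T$ up to isomorphism of inverse systems, and so is its limit.

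For the braid-like statement, the essential input is the last remark of Section~\ref{sec:Kh}: when the resolved crossing is positive, the map $f^+\co\Khred(\positive)\to\Khred(\zero)$ preserves the cohomological grading $u$. In the braid-like case the arcs carry a consistent orientation (inward at $D^2\times\{0\}$, outward at $D^2\times\{1\}$) that is respected by the closures, so each added half-twist is a positive crossing whose oriented resolution yields the next link in the family; thus every bonding map $f_i$ is genuinely $u$-graded rather than merely relatively graded. I would then note that $\varprojlim$ of an inverse system in the category of $\bZ$-graded vector spaces, with grading-preserving maps, is computed gradewise, so $\KHT(T)$ inherits a $\bZ$-grading.

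The main obstacle I expect is purely bookkeeping rather than conceptual: pinning down exactly how a homeomorphism of the pair fixing the suture translates into an isomorphism of the \emph{systems} $\{A_i,f_i\}$, including verifying that such a homeomorphism can be realized as a (possibly shifted) sequence of horizontal twists plus isotopy, so that it acts on the indexing set $\bZ$ by a shift and intertwines the bonding maps. Equivalently, one must be careful that the orientation ambiguity distinguishing $f_i$ from $f_i^+$ does not obstruct the identification; in the braid-like case this ambiguity disappears, which is precisely why the grading survives there.
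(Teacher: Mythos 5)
Your proposal is correct and follows essentially the same route as the paper: invariance via the observation that changing the representative merely reindexes the inverse system $\{A_i,f_i\}$ by a shift (which the paper compresses into ``immediate from the definitions,'' citing the Mittag--Leffler condition for inverse systems of finite-dimensional vector spaces), and the $\bZ$-grading via the fact that the braid-like orientation makes every bonding map the grading-preserving map $f_i^+$ from the long exact sequence at a positive crossing, so the limit inherits the $u$-grading. Your expansion of the reindexing step through cofinality is just a more explicit rendering of the paper's implicit argument, not a different method.
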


\begin{proof}
The proposition follows immediately from the definitions owing to the fact that the pair $\{A_i,f_i\}$ is an invariant of $T$ up to reindexing. However the grading in the second statement deserves a few words. 

If $T$ is a representative of a braid-like sutured tangle then the link $T(i)$, with Khovanov invariant $A_i$, inherits an orientation from the braid-like orientation on the properly embedded arcs of $T$. As a result, with this orientation fixed, the long exact sequence may be expressed as
\[\begin{tikzpicture}[>=latex] 
\matrix (m) [matrix of math nodes, row sep=1em,column sep=1em]
{ A_{i+1} & & A_i\\
& B & \\ };
\path[->,font=\scriptsize]
(m-1-1) edge node[auto] {$ f^+_i $} (m-1-3)
(m-2-2.north west) edge (m-1-1.south east)
(m-1-3.south west) edge  (m-2-2.north east);
\end{tikzpicture}\]

where $B=\Khred(T(\frac{1}{0}))[c_T+i+1]=\Khred^{u-c_T-i-1}(T(\frac{1}{0}))$. The integer $c_T$ counts the negative crossings in $T$ when the orientation on one of the arcs of $\tau$ is reversed so that \[\textstyle c=n_-(T(\frac{1}{0}))+i-n_-(T(i+1))=c_T+i.\]  Now the directed system $(A_i,f^+_i)$ is graded in the sense that each $f_i^+$ is a $\bZ$-graded map between $\bZ$-graded vector spaces. As a result the inverse limit $\KHT(T)$ inherits this grading as claimed. 
\end{proof}

\subsection{An invariant of strongly invertible knots}\label{sec:strong}

\begin{definition} A strong inversion $h$ on a knot $K$ is the isotopy class of an orientation preserving homeomorphism of $S^3$ that reverses orientation on the knot $K$. \end{definition}

Note that the fixed point set of $h$ is necessarily unknotted as a consequence of the Smith conjecture \cite{Waldhausen1969}. The involution $h$ is an element of the symmetry group of the knot, denoted $\Sym(S^3,K)$, which is identified with  the mapping class group of the knot exterior $M_K\cong S^3\smallsetminus \nu(K)$. Properties of this group are summarized by Kawauchi \cite[Chapter 10]{Kawauchi1996}; strong inversions in particular are considered by Sakuma \cite{Sakuma1986}. While the symmetry group of a knot may be trivial, and in particular, a given knot might not admit a strong inversion, these are relatively natural objects. For example:

\begin{theorem}[Schreier \cite{Schreier1924}, see {\cite[Exercise 10.6.4]{Kawauchi1996}} or {\cite[Proposition 3.1 (1)]{Sakuma1986}}]\label{thm:Schreier} If $K$ is a torus knot then $\Sym(S^3,K)\cong\bZ/2\bZ$ is generated by a unique strong inversion on $K$. $\hfill\ensuremath\Box$ \end{theorem}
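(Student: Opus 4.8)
The plan is to compute the symmetry group through the fundamental group of the exterior. Since $\Sym(S^3,K)$ is identified with $\MCG(M_K)$ and the torus knot exterior $M_K$ is Haken (it is irreducible, with incompressible torus boundary, and contains an essential vertical annulus), Waldhausen's rigidity theorem applies: $\MCG(M_K)$ is isomorphic to the group $\Out_\partial(\pi_1 M_K)$ of outer automorphisms preserving the peripheral structure. It therefore suffices to bound this group from above and to realise its nontrivial element geometrically. Writing $K=T_{p,q}$ on the genus-one Heegaard torus with $\gcd(p,q)=1$ and $|p|,|q|\geq 2$, I would record the standard presentation $\pi_1(M_K)=\langle a,b\mid a^p=b^q\rangle$, where $a,b$ are cores of the two complementary solid tori and $z:=a^p=b^q$ is the class of the regular Seifert fibre. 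Here the centre is exactly $Z=\langle z\rangle\cong\bZ$, and the quotient is the free product $G/Z\cong\bZ/p\ast\bZ/q$.

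The group-theoretic core comes next. Any automorphism $\phi$ preserves the characteristic subgroup $Z$, so $\phi(z)=z^{\varepsilon}$ with $\varepsilon\in\{\pm1\}$, and $\phi$ descends to $\bar\phi\in\operatorname{Aut}(\bZ/p\ast\bZ/q)$. Because $\gcd(p,q)=1$, the two free factors are the only conjugacy classes of maximal finite cyclic subgroups and are distinguished by their orders (via the Kurosh subgroup theorem); hence $\bar\phi$ carries $\langle\bar a\rangle$ and $\langle\bar b\rangle$ to conjugates of themselves. After composing with an inner automorphism I may assume $\phi(a)=w\,a^{r}z^{k}w^{-1}$ with $\gcd(r,p)=1$; computing $\phi(a)^{p}=z^{\,r+kp}$ and comparing with $\phi(a^{p})=\phi(z)=z^{\varepsilon}$ forces $r+kp=\varepsilon$, so $\bar a\mapsto\bar a^{\varepsilon}$, and symmetrically $\bar b\mapsto\bar b^{\varepsilon}$ with the same sign. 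Thus $\Out_\partial(\pi_1 M_K)$ has order at most two, the nontrivial class being represented by $a\mapsto a^{-1}$, $b\mapsto b^{-1}$, $z\mapsto z^{-1}$ up to conjugacy.

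Finally I would realise this class and conclude. Presenting $S^3=\{(z_1,z_2)\in\bC^2:|z_1|^2+|z_2|^2=1\}$ with $K$ cut out by $z_1^{p}=z_2^{q}$, complex conjugation $(z_1,z_2)\mapsto(\bar z_1,\bar z_2)$ is an orientation-preserving involution of $S^3$ (it has determinant $+1$ on $\bR^4$) that carries $K$ to itself while reversing the angular parameter, hence reversing the orientation of $K$; its fixed-point set $S^3\cap\bR^2$ is an unknotted circle. This is a strong inversion, and on $\pi_1$ it inverts $a$, $b$, and $z$, so it represents the nontrivial outer class. Combined with the bound above, this gives $\Sym(S^3,K)\cong\bZ/2\bZ$ generated by the strong inversion; as a byproduct every nontrivial symmetry preserves the orientation of $S^3$, recovering the chirality of torus knots.

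The main obstacle is the free-product computation: one must justify that the cyclic factors are characteristic up to conjugacy (the order argument requires $\gcd(p,q)=1$ so that the orders present in the two factors meet only in the trivial subgroup) and carefully control the central and conjugation ambiguities in the lift of $\bar\phi$ to $\phi$. A secondary point of care is confirming the precise hypotheses of Waldhausen's theorem for $M_K$, so that $\MCG(M_K)$ is genuinely $\Out_\partial(\pi_1 M_K)$ rather than merely surjecting onto it.
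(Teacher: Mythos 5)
This theorem is stated in the paper without proof --- it is quoted as a classical result of Schreier, with the $\Box$ standing in for the citations to Kawauchi and Sakuma --- so there is no in-paper argument to compare against; what you have done is reconstruct the standard proof behind those citations, and your reconstruction is essentially correct. Your algebraic core (centre $Z=\langle a^p\rangle$, quotient $\bZ/p\ast\bZ/q$, Kurosh forcing each factor to a conjugate of itself, and the relation $\phi(a)^p=z^{\varepsilon}$ pinning the exponent to $r\equiv\varepsilon\bmod p$ with a common sign $\varepsilon$ for both factors) is in fact Schreier's original computation of the automorphism group, and the passage from $\Out_\partial(\pi_1 M_K)$ to $\MCG(M_K)$ via Waldhausen rigidity plus the explicit realisation by complex conjugation on $\{z_1^p=z_2^q\}\cap S^3$ is exactly how the modern references (Sakuma, Kawauchi) package the topological statement. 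Two points you flag at the end deserve to be named precisely, since they are where the残 work lives: (i) to get $|\Out|\le 2$ you need that an automorphism of $\bZ/p\ast\bZ/q$ acting as conjugation on each factor is inner --- this is the two-factor case of the Fouxe--Rabinovitch description $\Out(A\ast B)\cong\Out(A)\times\Out(B)$ for non-isomorphic freely indecomposable factors, or can be done by hand with Kurosh normal forms; and (ii) the kernel of $\operatorname{Aut}(G)\to\operatorname{Aut}(G/Z)$ must be checked inner, which is a pleasant finite computation: such a $\phi$ has the form $\phi(g)=g\,z^{c(g)}$ for a homomorphism $c\co G\to\bZ$, so $c(a)=qm$, $c(b)=pm$ for some $m$, whence $\phi(z)=z^{1+pqm}$, and $\phi(z)=z^{\pm1}$ with $pq\ge 6$ forces $m=0$. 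With those two lemmas supplied, your bound together with the orientation-preserving involution (whose nontriviality in $\MCG$ is immediate since it inverts the central element $z$, which no inner automorphism can do) yields $\Sym(S^3,K)\cong\bZ/2\bZ$ generated by a strong inversion, unique up to conjugacy exactly as the theorem asserts; the chirality remark you extract as a byproduct is also correct.
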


\begin{theorem}[Thurston, see {\cite[Page 124]{Riley1979}} and {\cite[Proposition 3.1 (2)]{Sakuma1986}}] \label{thm:dihedral}If $K$ is a hyperbolic knot then $\Sym(S^3,K)$ is a subgroup of a dihedral group. In particular, $K$ admits 0, 1 or 2 strong inversions up to conjugacy, and $K$ admits 2 strong inversions if and only if $K$ admits a free or cyclic involution.$\hfill\ensuremath\Box$\end{theorem}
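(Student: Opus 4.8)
The plan is to reduce the statement to a question about a finite group of isometries and then to exploit its constrained action on the cusp of $M_K$. Since $K$ is hyperbolic, Thurston's hyperbolization theorem gives $M_K$ a complete finite-volume hyperbolic metric with a single cusp. By Mostow--Prasad rigidity every self-homeomorphism of $M_K$ is isotopic to a unique isometry, so under the identification $\Sym(S^3,K)\cong\MCG(M_K)$ we obtain $\Sym(S^3,K)\cong\operatorname{Isom}(M_K)=:G$, and $G$ is finite because a finite-volume hyperbolic manifold has finite isometry group.

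Next I would restrict the isometries to a horospherical cusp cross-section, a Euclidean torus $T=\partial M_K$. This gives a homomorphism $G\to\operatorname{Isom}(T)$ which is injective: an isometry restricting to the identity on $T$ fixes a horosphere pointwise and is therefore the identity of $\mathbb{H}^3$. On $H_1(T)\cong\bZ^2$ the meridian $\mu$ is characterised up to sign as the generator pairing with $H_1(M_K)\cong\bZ$, and the longitude $\lambda$ as the generator of the kernel of $H_1(T)\to H_1(M_K)$; hence every element of $G$ sends $\mu\mapsto\pm\mu$ and $\lambda\mapsto\pm\lambda$. I would then let $C\trianglelefteq G$ be the kernel of the action on $H_1(T)$ --- the subgroup of periodic symmetries --- and observe that $C$ is cyclic while the image $G/C$ embeds in the group $\{\pm I,\operatorname{diag}(\pm1,\mp1)\}\cong(\bZ/2\bZ)^2$ of diagonal sign matrices. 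Choosing any element $h$ that reverses the longitude and inverts $C$ exhibits $G$ as a subgroup of the dihedral group $C\rtimes\langle h\rangle$, which gives the first assertion (recall that $(\bZ/2\bZ)^2\cong D_2$ is itself dihedral).

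Finally I would count strong inversions. A strong inversion preserves the orientation of $S^3$, hence of $T$, so it acts with determinant $+1$ on $H_1(T)$; since it reverses $K$ it sends $\lambda\mapsto-\lambda$, forcing $\mu\mapsto-\mu$, i.e. the action $-I$. Thus the strong inversions are exactly the involutions in the single coset of $G$ mapping to $-I$, and writing $\langle\rho,h\rangle=C\rtimes\langle h\rangle\le G$ with $C=\langle\rho\rangle$ cyclic, these are precisely the elements $\rho^k h$. A routine check in a dihedral group shows the $\rho^k h$ form one conjugacy class if $|C|$ is odd and two if $|C|$ is even, giving the count $0$, $1$, or $2$. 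Moreover two inequivalent strong inversions $h_1,h_2$ satisfy $h_1h_2\in C$, an involution acting trivially on $H_1(T)$; such an element is a free or cyclic involution, and conversely an order-two element of $C$ produces the second strong inversion $\rho^{|C|/2}h$. This is exactly the stated criterion.

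The main obstacle is the last step: translating the clean homological dichotomy into an honest count of conjugacy classes of \emph{geometric} involutions, and matching $h_1h_2$ with a free or cyclic involution, requires keeping careful track of the orientation and fixed-point data attached to each element of $G$ rather than just its image in $GL_2(\bZ)$. The hyperbolization input is deep but is used as a black box; the genuine bookkeeping lies in the dihedral structure of $G$ and the identification of the periodic subgroup $C$ with the free and cyclic symmetries.
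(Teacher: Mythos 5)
First, note that the paper does not prove Theorem \ref{thm:dihedral} at all: it is quoted, with a $\Box$, from Riley and Sakuma (ultimately Thurston), so your sketch can only be measured against the standard argument in those sources --- which runs through the faithful action of the isometry group on the knot $K$ itself, not primarily through the cusp torus. Your reduction to $G=\operatorname{Isom}(M_K)$ and the injectivity of restriction to a horospherical cross-section are fine, but two load-bearing steps are asserted rather than proved, and one of them would genuinely fail as stated. (a) You assert $C$ is cyclic; a finite group of translations of a flat torus is abstractly only $\bZ/a\oplus\bZ/b$, so cyclicity needs topological input. The standard route: restrict $C$ to $K\cong S^1$; an element fixing $K$ pointwise has fixed set containing a knotted circle, contradicting the Smith conjecture, so $C$ embeds in a finite group of orientation-preserving homeomorphisms of a circle, which is cyclic. (b) More seriously, exhibiting the subgroup $C\rtimes\langle h\rangle$ and remarking that $(\bZ/2\bZ)^2\cong D_2$ does not prove that $G$ itself embeds in a dihedral group: when the point group is all of $(\bZ/2\bZ)^2$, $G$ is an extension of $(\bZ/2\bZ)^2$ by the cyclic group $C$, and such extensions need not embed in dihedral groups --- $\bZ/4\bZ\times\bZ/2\bZ$ is such an extension, and every subgroup of a dihedral group is cyclic or dihedral. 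The actual argument makes \emph{all} of $G$ act faithfully on the circle $K$ (faithfulness again via Smith theory, since the fixed set of a nontrivial finite-order diffeomorphism of $S^3$ is empty, unknotted, $S^0$, or $S^2$, none of which can contain the hyperbolic knot $K$), and then invokes the fact that a finite group acting faithfully on $S^1$ is cyclic or dihedral. A smaller issue of the same kind: the homological characterisation of $\mu$ only forces $\mu\mapsto\pm\mu+k\lambda$, and your finite-order argument kills $k$ only for the equal-sign matrices; killing it in general uses that every symmetry preserves the meridian \emph{slope} (it extends over $S^3$, or one quotes Gordon--Luecke).

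Your final count also contains a concrete error: it is not true that two inequivalent strong inversions $h_1,h_2$ have $h_1h_2$ an involution in $C$. In $D_n$ with $n=|C|$ even, $h$ and $\rho h$ are non-conjugate reflections, yet $h\cdot\rho h=\rho^{-1}$ has order $n$. The correct bridge to the free or cyclic involution is the central element $\rho^{n/2}\in C$ when $n$ is even (with $h$ and $\rho^{n/2}h$ representing the two classes), and conversely an order-two element of $C$ splits the reflections into two classes. Moreover, the conjugacy count must be carried out in the full group $G$ --- indeed in the full symmetry group, per Definition \ref{def:strong-knot} --- not merely inside $C\rtimes\langle h\rangle$: when $G$ strictly contains this dihedral subgroup (the amphicheiral case, e.g.\ the figure eight with $G\cong D_4$), you must check that the extra elements do not fuse the two reflection classes. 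So the ``routine check in a dihedral group'' is exactly where the genuine bookkeeping lives, as your own closing caveat half-acknowledges; as written, the sketch does not yet establish either the dihedral bound or the $0/1/2$ dichotomy.
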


More generally, any given knot admits finitely many strong inversions \cite{Kojima1983}. 

\begin{definition}\label{def:strong-knot}
A strongly invertible knot is a pair $(K,h)$ where $K$ is a knot in $S^3$ and $h$ is a strong inversion on $K$. Equivalence of strongly invertible knots $(K,h)$ and $(K',h')$ is up to orientation preserving homeomorphism $f\co S^3\to S^3$ satisfying $f(K)=K'$ and $fhf^{-1}=h'$. In particular, a strongly invertible knot corresponds to the conjugacy class of a strong inversion in $\Sym(S^3,K)$. 
\end{definition}

If $(K,h)$ is a strongly invertible knot then the knot exterior $M_K$ admits an involution $h|_{M_K}$ with one dimensional fixed-point set meeting the boundary torus in four points. Moreover, the quotient  of $M_K$ by the involution $h|_{M_K}$ is necessarily homeomorphic to $B^3$ (see \cite[Proposition 3.5]{Watson2012}, for example), and the image of the fixed-point set in the quotient is a pair of properly embedded arcs. 

Choose a pair of disjoint annuli in $\partial M_K$, equivariant  with respect to $h$, with meridional cores. Then the quotient of $M_K$ (as an equivariantly sutured manifold) is a sutured tangle denoted $T_{K,h}$; see Figure \ref{fig:si}. Notice that, by construction, the closure $T_{K,h}(\frac{1}{0})$ provides a branch set for the trivial surgery on $K$ and is therefore the trivial knot.

\begin{figure}
\labellist\footnotesize
\pinlabel $h$ at 136 4
\pinlabel $h(\gamma_\mu)$ at 30 48 
\pinlabel $\gamma_\mu$ at 193 48 
\pinlabel $\mu_0$ at 93 198
\pinlabel $\mu_1$ at 93 127

\pinlabel $\overline{\mu_0}$ at 283 100
\pinlabel $\overline{\mu_1}$ at 458 100
\endlabellist
\includegraphics[scale=0.75]{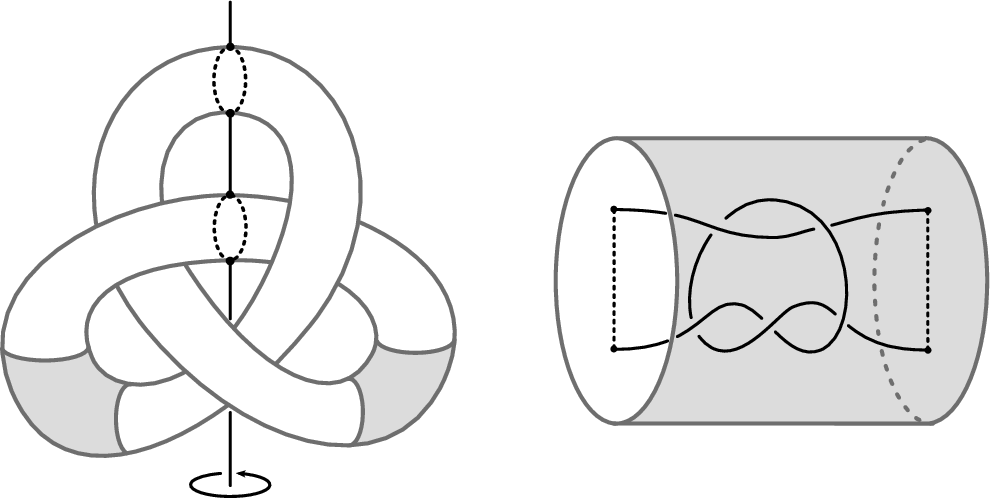}
\caption{The trefoil is strongly invertible. On the left, the involution on the complement of the trefoil is illustrated. Note that this symmetry exchanges the annular sutures $\gamma_\mu$ and $h(\gamma_\mu)$ in the boundary  while fixing the meridians $\mu_0$ and $\mu_1$. On the right, the resulting quotient is a sutured tangle where each meridian descends to an arc $\operatorname{im}(\mu_i)=\overline{\mu_i}\in D^2\times\{i\}$ for $i=0,1$. This representative for the quotient tangle shown is compatible with the framing $6\mu+\lambda$ (in terms of the preferred generators); more on this quotient may be found in \cite[Section 3]{Watson2012} and \cite[Section 2]{Watson2013}.}\label{fig:si}\end{figure}

\begin{proposition}\label{prp:tangle}
There is a one-to-one correspondence between strongly invertible knots $(K,h)$ and sutured tangles satisfying the additional property that $T(\frac{1}{0})$ is the trivial knot. 
\end{proposition}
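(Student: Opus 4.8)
The plan is to construct explicit maps in both directions and verify they are mutually inverse, using the two-fold branched cover as the bridge between a sutured tangle and a knot equipped with an involution. In the forward direction I send $(K,h)$ to the quotient tangle $T_{K,h}$ already constructed above. To see this descends to equivalence classes, suppose $f\co S^3\to S^3$ realises an equivalence $(K,h)\sim(K',h')$, so that $f(K)=K'$ and $fhf^{-1}=h'$. Then $f$ restricts to an orientation preserving homeomorphism $M_K\to M_{K'}$ conjugating $h|_{M_K}$ to $h'|_{M_{K'}}$, and hence descends to a homeomorphism of quotients $D^2\times I\to D^2\times I$ carrying $\tau$ to $\tau'$. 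Since the sutures are meridional and a meridian is unique up to equivariant isotopy on $\partial M_K$, any two choices of equivariant sutures yield homeomorphic sutured tangles; after an isotopy supported near the boundary the descended homeomorphism may be taken to fix the suture pointwise.

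In the backward direction, given a sutured tangle $T$ with $T(\frac{1}{0})$ the trivial knot, I form the two-fold branched cover $\Br_T$ together with its deck involution $\iota$. The branch set meets $\partial(D^2\times I)=S^2$ in four points, whose double branched cover is $T^2$, so $\Br_T$ has torus boundary; the hypothesis that $T(\frac{1}{0})$ is a (connected) knot, with connected double branched cover $S^3$, forces $\Br_T$ to be connected. The meridional suture lifts to a pair of meridional curves and so determines a filling slope, and filling $\Br_T$ along it produces the two-fold branched cover of $\bigl(S^3,T(\tfrac{1}{0})\bigr)$, which is $S^3$ precisely because $T(\frac{1}{0})$ is unknotted. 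Taking $K$ to be the core of the filling solid torus identifies $\Br_T$ with $M_K$, and $\iota$ extends equivariantly over the filling to an involution $h$ of $S^3$. Being the covering involution of an oriented branched cover, $h$ is orientation preserving with one-dimensional fixed set (the lift of $\tau$, closing up to the unknotted axis); and since the axis meets $K$ in two points, $h$ restricts to the core circle as a reflection and thus reverses the orientation of $K$. Hence $(K,h)$ is a strongly invertible knot. Well-definedness on equivalence classes follows from functoriality of the two-fold branched cover: a homeomorphism of sutured tangles fixing the suture lifts to an equivariant homeomorphism of the covers, unique up to the deck involution, conjugating the two involutions.

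That the two assignments are mutually inverse is then a matter of tracing the constructions. Starting from $(K,h)$, the discussion preceding the statement identifies $M_K$ with $\Br_{T_{K,h}}$ and $h|_{M_K}$ with the deck involution, so the meridional filling recovers $(S^3,K)$ together with the extended involution $h$. Conversely, starting from $T$, the quotient of $(\Br_T,\iota)$ by the involution reproduces $(D^2\times I,\tau)$ with the same meridional suture, using $\Fix(\iota)$ as the branch set. Thus each composite is the identity on equivalence classes.

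The main difficulty, and the step deserving the most care, is matching the two equivalence relations exactly: showing that conjugacy of the involution in $\Sym(S^3,K)$ corresponds precisely to homeomorphism of sutured tangles fixing the suture pointwise. This rests on the functoriality of the two-fold branched cover in both directions — lifting and descending homeomorphisms — which in turn uses that the cover is canonically determined by its branch set together with the Smith-conjecture input that the fixed set is unknotted. A secondary point requiring attention is that the meridional sutured structure pins down exactly the framing needed to reconstruct the embedded knot $K\subset S^3$, rather than merely the abstract exterior $M_K$.
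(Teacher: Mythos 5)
Your proposal is correct and follows essentially the same route as the paper: the forward direction is the quotient construction already described, and the inverse is the two-fold branched cover of the unknotted closure $T(\frac{1}{0})$, which is $S^3$ with the deck transformation as the strong inversion. Your knot $K$, the core of the meridional filling solid torus, is exactly the paper's lift of the unknotted arc $a$ joining the two lobes of the closure, so the two constructions coincide; you merely supply routine details (well-definedness on conjugacy classes, equivariant uniqueness of the sutures, functoriality of lifts) that the paper leaves implicit.
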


\labellist
\large \pinlabel $T$ at 52 22.5
\small \pinlabel $a$ at 2 53
\endlabellist
\parpic[r]{\includegraphics[scale=0.75]{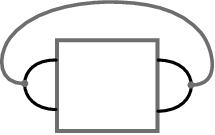}}
{\it Proof.} This is immediate from the discussion above. To reverse the construction, notice that the two-fold branched cover of the trivial knot $T(\frac{1}{0})$ is $S^3$, and the lift of an unknotted arc $a$ meeting the two lobes of the closure defining $T(\frac{1}{0})$ is a strongly invertible knot in $S^3$. $\hfill\ensuremath\Box$

Note that, if $T_{K,h}= (D^2\times I,\tau)$ is the tangle associated with a strongly invertible knot $(K,h)$, then the above construction realises the knot exterior $M_K$ as the two-fold branched cover of $D^2\times I$, with branch set $\tau$, denoted $\Br_{T_{K,h}}$. In particular, a given distinct (conjugacy classes of) strong inversions $h,h'$ on $K$ we get distinct strongly invertible knots $(K,h)$, $(K,h')$  (in the sense of Definition \ref{def:strong-knot}) and the knot exterior may be realised as a two-fold branched cover in two distinct ways. 

Moreover, the Dehn surgery $S^3_n(K)$ on a strongly invertible knot $(K,h)$ may be realised as the two-fold branched cover of $S^3$ with branch set $T_{K,h}(n)$ for a suitable choice of representative (this is the preferred representative of \cite[Section 3.4]{Watson2012}). Note that this is a generalization/reformulation of the Montesinos trick \cite{Montesinos1976-trick}.

It is an immediate consequence of the property that $T(\frac{1}{0})$ is unknotted that the sutured tangle $T$ is braid-like. In view of Proposition \ref{prp:tangle}, given a strongly invertible knot $(K,h)$ we can associate the $\bZ$-graded invariant $\KHT(T_{K,h})$.

Let $\boldA=\KHT(T_{K,h})$ and recall that $x\in\boldA$ may be identified with a sequence $\{x_j\}_{j\in\bZ}$ such that $f_j(x_{j+1})=x_j$, where $x_j\in\Khred(T(j))$, for all $j\in\bZ$. 

\begin{definition}\label{def:strong-inv} 
 Given a strongly invertible knot $(K,h)$ consider the subspace $\boldK\subset\boldA$ consisting of sequences satisfying the additional condition that $x_j=0$ for $j\ll0$.  Denote by $\kh(K,h)$ the vector space $\boldA/\boldK$.
\end{definition}

 \begin{proposition}\label{prp:subspace}
 The vector space $\kh(K,h)$ is a  finite-dimensional $\bZ$-graded invariant of the strongly invertible knot $(K,h)$, up to isomorphism. 
  \end{proposition}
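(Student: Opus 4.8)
The plan is to obtain all three assertions—invariance, the $\bZ$-grading, and finite-dimensionality—from the properties of $\boldA=\KHT(T_{K,h})$ already in hand, exploiting that $\boldK$ is defined intrinsically from the inverse system. For invariance I would argue as follows. By Proposition \ref{prp:tangle} the pair $(K,h)$ determines, and is determined by, the sutured tangle $T_{K,h}$ with $T(\frac{1}{0})$ trivial, and the preceding proposition already gives that $\boldA$ is an invariant of this sutured tangle up to isomorphism. The point to check is that $\boldK$—the coherent sequences $\{x_j\}$ with $x_j=0$ for $j\ll 0$—is a feature of the inverse system $\{A_i,f_i\}$ alone, so it is carried to the corresponding subspace under any isomorphism of inverse systems. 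Since a change of representative only reindexes the family by adding half-twists, i.e.\ shifts $i\mapsto i+k$, and such a shift plainly preserves the tail-vanishing condition, every isomorphism $\boldA\cong\boldA'$ coming from an equivalence of sutured tangles restricts to $\boldK\cong\boldK'$ and hence descends to an isomorphism $\kh(K,h)\cong\kh(K',h')$.

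For the grading I would invoke that $T_{K,h}$ is braid-like, a consequence of $T(\frac{1}{0})$ being unknotted, so that the preceding proposition equips $\boldA$ with a $\bZ$-grading in which each $f_i=f_i^+$ is homogeneous. Because the $f_i^+$ preserve the homological grading, the condition $x_j=0$ for $j\ll0$ may be tested on each homogeneous component separately; thus $\boldK$ is a graded subspace and the quotient $\kh(K,h)=\boldA/\boldK$ inherits the $\bZ$-grading.

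The substantive claim is finite-dimensionality, and here the plan is to bound $\kh(K,h)$ by a single finite-dimensional term of the system. Fix any index $i_0$ and consider the projection $\boldA\to A_{i_0}$, $\{x_j\}\mapsto x_{i_0}$. If $x_{i_0}=0$, then since $x_j=f_j(x_{j+1})$ a downward induction gives $x_j=0$ for every $j\le i_0$, so the sequence lies in $\boldK$; that is, $\ker(\boldA\to A_{i_0})\subseteq\boldK$. Consequently $\kh(K,h)=\boldA/\boldK$ is a quotient of $\boldA/\ker(\boldA\to A_{i_0})\cong\operatorname{im}(\boldA\to A_{i_0})\subseteq A_{i_0}$, and the latter is finite-dimensional because $A_{i_0}=\Khred(T(i_0))$ is. In particular $\dim\kh(K,h)\le\dim\Khred(T(i_0))$ for every $i_0$.

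The step I expect to require genuine care is the invariance bookkeeping: one must confirm that the isomorphisms witnessing invariance of $\boldA$ genuinely respect the tail subspace $\boldK$, equivalently that the only ambiguity in the inverse system arising from the choice of representative is an index shift (which preserves the condition $j\ll0$). By contrast, the finiteness—which superficially looks like the hard part—falls out immediately once one observes that a coherent sequence vanishing at a single index must vanish at all smaller indices, so that $\boldK$ already contains the kernel of projection to any fixed $A_{i_0}$.
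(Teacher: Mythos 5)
Your proposal is correct and rests on the same key observation as the paper's proof: a coherent sequence $\{x_j\}$ vanishing at a single index vanishes at all smaller indices by downward induction on $x_{j-1}=f_{j-1}(x_j)$, so that vanishing at one index forces membership in $\boldK$, and hence $\kh(K,h)$ is controlled by a single finite-dimensional $A_{i_0}$. The paper packages this dually --- choosing a graded section $\sigma\co\kh(K,h)\into\boldA$ and showing each $\iota_j=\pi_j\circ\sigma$ is injective, so that $\kh(K,h)$ embeds in every $A_j$ --- whereas you note $\ker(\boldA\to A_{i_0})\subseteq\boldK$ and pass to quotients, avoiding the non-canonical splitting; this is only a presentational difference (your explicit check that changes of tangle representative merely reindex the inverse system, preserving the tail condition, spells out a point the paper leaves implicit).
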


\begin{proof}
This can be seen from the short exact sequence of vector spaces \[
\begin{tikzpicture}[>=latex] 
\matrix (m) [matrix of math nodes, row sep=1.5em,column sep=1.5em]
{ 0&\boldK&\boldA& \kh(K,h) & 0 \\
 };
\path[->,font=\scriptsize]
(m-1-1) edge (m-1-2) 
(m-1-2) edge (m-1-3) 
(m-1-3) edge (m-1-4) 
(m-1-4) edge (m-1-5) 
;
\end{tikzpicture}
\] which, owing to the fact that the inclusion is graded, may be decomposed according to the $\bZ$-grading. That is, $\kh(K,h)\cong\bigoplus_{u\in\bZ}\kh^u(K,h)$ where $\kh^u(K,h)\cong \boldA^u/\boldK^u$ is the $u^{\text{th}}$ graded piece of $\kh(K,h)$ having decomposed the inclusion $\boldK^u\into\boldA^u$ according to the $\bZ$-grading. 

Setting $A_j=\Khred(T_{K,h}(j))$, so that $\boldA=\varprojlim A_j$ as in the definition of $\KHT(T_{K,h})$, any choice of splitting  gives rise to a (non-canonical) inclusion $\sigma\co\kh(K,h)\into\boldA$. 
Recall that the universal property for the inverse limit is summarized in the present case by the commutative diagram
\[\begin{tikzpicture}[>=latex] 
\matrix (m) [matrix of math nodes, row sep=1.5em,column sep=2em]
{ & \kh(K,h) & \\
 & \boldA & \\
A_{j+1}&  & A_j \\ };
\path[->,font=\scriptsize]
(m-1-2) edge[right hook->] (m-2-2)  
(m-2-2) edge node[above] {$ \pi_{j+1} $} (m-3-1)
(m-2-2) edge node[above] {$ \pi_j  $} (m-3-3)
(m-1-2) edge[bend right] node[left] {$ \iota_{j+1} $} (m-3-1)
(m-1-2) edge[bend left] node[right] {$ \iota_j  $} (m-3-3)
(m-3-1) edge node[above] {$f_j$} (m-3-3);
\node at (0.35,0.555) {\scriptsize$\sigma$};
\node at (-3.2,-1.1) {$\cdots$};\draw[->] (-2.9,-1.1) -- (-2.4,-1.1);
\node at (3.2,-1.1) {$\cdots$};\draw[->] (2.4,-1.1) -- (2.9,-1.1);
\end{tikzpicture}\]
resulting in a family of maps $\iota_j=\pi_j\circ\sigma$ for $j\in\bZ$. Note that $\ker(\iota_j)=\ker(\sigma)=0$ since $\sigma(y)_j\in A_j$ must be non-zero, for all $j\in\bZ$, for any given non-zero element $y\in\kh(K,h)$. As a result this construction gives injections $\iota_j\co\kh(K,h)\into A_j$ for all $j\in\bZ$ and, since $A_j$ is finite dimensional, $\kh(K,h)$ must be finite dimensional as well.
\end{proof} 
%From the proof of Proposition \ref{prp:subspace}, any choice of graded section $\sigma$ gives rise to an alternate, if more colloquial, interpretation of $\kh(K,h)$ as the isomorphism class of the largest $\bZ$-graded subspace common to every $A_j\cong\Khred(T_{K,h}(j))$. As a result, the grading on $\kh(K,h)$ is inherited from any single inclusion once a graded section is chosen. 

We reiterate that distinguishing strong inversions on a particular knot gives additional information about the symmetry group. In particular, if $\kh(K,h)\ncong\kh(K,h')$ then we have identified distinct conjugacy classes of involutions in $\Sym(S^3,K)$. Moreover, if $K$ is hyperbolic and $\kh(K,h)\ncong\kh(K,h')$, then there must be a free or cyclic involution on the knot complement and hence a third element of order two in $\Sym(S^3,K)$ (see Theorem \ref{thm:dihedral}).  

\section{Properties}\label{sec:properties}

\subsection{Behaviour under mirror image}\label{sub:mirror} A key property of the invariant $\kh$ is inherited from Khovanov homology. 

\begin{proposition}\label{prp:mirror}
Let $(K,h)$ be a strongly invertible knot and denote by $(K,h)^*$ the strongly invertible mirror, obtained by reversing orientation on $S^3$. Then $\kh^u(K,h)^*\cong\kh^{-u}(K,h)$ as $\bZ$-graded vector spaces. 
\end{proposition}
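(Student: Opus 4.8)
The plan is to identify the inverse system underlying $(K,h)^{*}$ with a reindexed dual of the system underlying $(K,h)$, and then to read off the grading reversal directly from the behaviour of $\Khred$ under mirroring. First I would set up the mirror tangle. Reversing orientation on $S^3$ descends, through the branched--cover correspondence of Proposition \ref{prp:tangle}, to reversing orientation on the quotient ball, so $(K,h)^{*}$ is the strongly invertible knot associated with the mirror tangle $T^{*}=T_{K,h}^{*}$. Since mirroring turns positive half--twists into negative ones and commutes with closing up a tangle (no crossings are added in the closure), the closures satisfy $T^{*}(i)=\big(T(-i)\big)^{*}$. Writing $T=T_{K,h}$, $A_i=\Khred(T(i))$ and $A_i^{*}=\Khred(T^{*}(i))$, the mirror symmetry $\Khred^{u}_{q}(L^{*})\cong\Khred^{-u}_{-q}(L)$ together with the self--duality of finite--dimensional $\bF$--vector spaces gives grading--reversing isomorphisms
\[
A_i^{*,u}\;\cong\;\big(A_{-i}^{-u}\big)^{\vee}\;\cong\;A_{-i}^{-u}.
\]

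Next I would track the structure maps. The tower maps $f_i$ arise from the crossing--resolution long exact sequence, and mirroring a diagram inverts its cube of resolutions; over $\bF$ this identifies the reduced complex of $L^{*}$ with the dual of that of $L$ and sends the induced maps to their transposes. Consequently, under the isomorphisms above, the structure map $f_i^{*}\co A_{i+1}^{*}\to A_i^{*}$ of the tower for $T^{*}$ is identified with the dual $(f_{-i-1})^{\vee}\co (A_{-i-1})^{\vee}\to(A_{-i})^{\vee}$ of the tower map $f_{-i-1}\co A_{-i}\to A_{-i-1}$ for $T$. Here the reindexing $i\mapsto -i$ exactly absorbs the reversal of direction caused by dualizing, and one checks that the shift constants are compatible so that, after reindexing, no net grading shift is introduced beyond the reversal $u\mapsto-u$ and the maps remain homological--grading preserving on the braid--like tower. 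Thus $\{A_i^{*},f_i^{*}\}$ is, after reindexing and reversing the homological grading, the dual of the inverse system $\{A_i,f_i\}$.

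Finally I would invoke the description of $\kh$ recorded after Proposition \ref{prp:subspace}: in each homological grading $u$ the space $\kh^{u}$ is the largest common graded subspace of the $A_j$, equivalently the stable image $\operatorname{im}\big(\Phi^{u}\co A_{+\infty}^{u}\to A_{-\infty}^{u}\big)$ of the composite twisting maps, where $A_{\pm\infty}^{u}$ denotes the (finite) stabilised value of $A_i^{u}$ as $i\to\pm\infty$. This stabilisation holds because in a fixed grading all but finitely many $f_i$ are isomorphisms, the only possible failures occurring in the two gradings adjacent to the $\Khred(T(\frac{1}{0}))\cong\bF$ term of the exact triangle. Applying the identification above, the composite $\Phi^{*,u}$ for $T^{*}$ is the transpose of the composite $\Phi^{-u}$ for $T$, so
\[
\kh^{u}(K,h)^{*}\;\cong\;\operatorname{im}\big(\Phi^{*,u}\big)\;\cong\;\operatorname{im}\big(\Phi^{-u}\big)^{\vee}\;\cong\;\operatorname{im}\big(\Phi^{-u}\big)\;\cong\;\kh^{-u}(K,h),
\]
using that a map and its transpose have equal rank and that finite--dimensional $\bF$--spaces are self--dual.

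The main obstacle I expect is the middle step: verifying rigorously that mirroring sends each tower map to the transpose of the corresponding original map, and that the reindexing $i\mapsto -i$ together with the grading reversal $u\mapsto -u$ line up with the shift constants $c$ and $c_T$ in the long exact sequence (including the placement of the $\bF$ term). This is where the sign and shift bookkeeping is most delicate; once the stable--image description of $\kh^{u}$ is in hand, the remaining duality argument over $\bF$ is purely formal.
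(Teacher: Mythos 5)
Your argument is sound in outline, but it is not the paper's proof: it is precisely the alternative route the paper only sketches in the remark immediately following its proof, namely the observation that $\KHT^{-u}(T_{K,h})\cong\KHT^{u}(T^*_{K,h})$ via chain-level duality. The paper's actual proof is softer and avoids dualizing the system altogether: a graded section $\sigma\co\kh(K,h)\into\KHT(T_{K,h})$ produces a compatible family of graded inclusions $\iota_j=\pi_j\circ\sigma$, and since $(T_{K,h}(j))^*\simeq T^*_{K,h}(-j)$ the mirror symmetry $\Khred^u(T^*_{K,h}(-j))\cong\Khred^{-u}(T_{K,h}(j))$ converts the inclusions for $(K,h)^*$ into inclusions for $(K,h)$ and vice versa, giving the two containments $\kh^u(K,h)^*\subseteq\kh^{-u}(K,h)$ and $\kh^{-u}(K,h)\subseteq\kh^u(K,h)^*$ without ever matching the structure maps $f_i^*$ against duals of the $f_i$. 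What your route buys is precision where the paper is colloquial: your stable-image formulation $\kh^u\cong\operatorname{im}\big(\Phi^u\co A^u_{+\infty}\to A^u_{-\infty}\big)$ is the correct rigorous reading of the paper's ``largest $\bZ$-graded subspace common to every $A_j$'' --- read naively as a dimension count that description is false, since in the unknot computation $f_0\circ f_1=0$ while the individual groups are nonzero --- and it follows from Corollaries \ref{cor:stability} and \ref{cor:inj/surj} exactly as you say. Your identifications $T^*(i)=(T(-i))^*$ and $A^{*,u}_i\cong A^{-u}_{-i}$, and the final rank argument over $\bF$, are all correct.

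The obstacle you flag at the end is genuine, and it is exactly the check the paper defers (``we leave the reader to check that the relevant linear maps $f_{-j}$ and $f^*_{j-1}$ exchanged correspond to projections and inclusions, respectively''). Concretely: at the chain level $(f_{-i-1})^\vee$ is induced by an \emph{inclusion} of complexes --- it is the map in the long exact sequence for the distinguished \emph{negative} crossing obtained by drawing $T^*(i)$ as $T^*(i+1)$ plus a negative half-twist --- whereas $f^*_i$ is induced by the \emph{projection} associated to the positive crossing of $T^*(i+1)$. These are maps between the same groups arising from different diagrams, related by a Reidemeister 2 equivalence, and identifying them (up to homotopy, hence on homology) is the one lemma your write-up asserts rather than proves; equivalently, one must show the positive-crossing tower and the negative-crossing tower for the same tangle compute the same $\kh$. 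So your proposal is complete modulo the same deferred verification as the paper's alternative remark; the grading bookkeeping, by contrast, is unproblematic, since the isomorphism $\Khred^u(T^*(-j))\cong\Khred^{-u}(T(j))$ holds with no shift and the constants $c_T$ only enter inside the exact triangles, as in the proof of Lemma \ref{lem:stability}.
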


Note that the strongly invertible mirror need not fix the conjugacy class of $h\in\Sym(S^3,K)$ in the case that the underlying knots are amphicheiral (that is, when $K^*\simeq K$); compare \cite[Proposition 4.3]{Sakuma1986}

\begin{proof}[Proof of Proposition \ref{prp:mirror}] From the construction of the tangle $T_{K,h}$ associated with a strongly invertible knot $(K,h)$ we have that $T^*_{K,h}=T_{(K,h)^*}$ is the tangle associated with the strongly invertible mirror $(K,h)^*$. From the forgoing discussion, any choice of graded section $\sigma\co\kh(K,h)\into\KHT(T_{K,h})$ gives rise to a family of graded inclusion maps $\iota_j=\pi_j\circ\sigma$ for $j\in\bZ$. Now, for any $j\in\bZ$, we have 
\[\begin{tikzpicture}[>=latex] 
\matrix (m) [matrix of math nodes, row sep=1.5em,column sep=2.5em]
{ \kh^u(K,h)^*  &   \Khred^u(T_{K,h}^*(-j)) \\
\kh^{-u}(K,h) &     \Khred^{-u}(T_{K,h}(j))\\};
\path[->,font=\scriptsize]
(m-1-1) edge[right hook->] node[above] {$\iota_{-j}$} (m-1-2)
(m-2-1) edge[right hook->] node[above] {$\iota_{j}$} (m-2-2)
(m-1-2) edge[<->] node[right] {$\cong$}(m-2-2);
\end{tikzpicture}\]
by applying the behaviour of Khovanov homology for mirrors since $(T_{K,h}(j))^*\simeq T^*_{K,h}(-j)$. Composing with the isomorphism gives inclusions establishing that $\kh^u(K,h)^*\subseteq\kh^{-u}(K,h)$ and $\kh^{-u}(K,h)\subseteq\kh^u(K,h)^*$. As a result we have the identification  $\kh^u(K,h)^*\cong \kh^{-u}(K,h)$ as claimed.
\end{proof}

This may be obtained, alternatively, from the more general observation that \[\KHT^{-u}(T_{K,h})\cong\KHT^{u}(T^*_{K,h}).\] Given that $\Khred^{-u}(T_{K,h}(j))\cong\Khred^u(T_{K,h}^*(-j))$ we leave the reader to check that the relevant linear maps $f_{-j}$ and $f^*_{j-1}$ exchanged correspond to projections and inclusions, respectively, of complexes at the chain level.  In particular, there is an analogous inverse system associated with resolutions of negative crossings arising from the long exact sequence for a negative crossing. 

\subsection{Notions of stability} A key feature of Khovanov homology, leading ultimately to the computability of $\kh(K,h)$, is that the vector space $\Khred(T(n))$ stabilises, in a suitable sense, for sufficiently large $n$. This is made precise in the following statement (compare \cite[Lemma 4.10]{Watson2012}, taking note of the change in grading convention).

\begin{lemma}\label{lem:stability}
Fix a representative $T=T_{K,h}$ for the sutured tangle associated with a strongly invertible knot $(K,h)$. Let $X$ be the one dimensional bi-graded vector space $\bF^{(c_T,\phalf(1-c_T))}\cong\Khred(T(\frac{1}{0}))[c_T,\frac{1}{2}(1-c_T)]$ where $c_T$ is the difference in negative crossings between the braid-like and non-braid-like orientation on the arcs of $T$. Then, up to an overall $-\frac{n}{2}$ in the $\delta$-grading, \[\Khred(T(n))\cong H_*\Big(\Khred(T(0)) \overset{D}{\to} \bigoplus_{i=0}^nX[i,0]\Big)\] obtained from an iterated mapping cone construction where $D$ is of bi-degree $(1,1)$. 
\end{lemma}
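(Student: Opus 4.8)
The plan is to build $\Khred(T(n))$ as an iterated mapping cone by repeatedly applying the skein long exact sequence recorded in Section~\ref{sec:Kh}, and then to collapse this cone onto the displayed two-term complex using the $\delta$-grading. First I would isolate the final half-twist of the representative $T(n)$ and treat the crossing it introduces as the distinguished positive crossing. Its oriented resolution (the $0$-smoothing) undoes one half-twist and returns the braid-like closure $T(n-1)$, while its disoriented resolution (the $1$-smoothing) caps off the twist region and produces exactly the $T(\tfrac{1}{0})$-closure. Since $T(\tfrac{1}{0})$ is the trivial knot, $\Khred(T(\tfrac{1}{0}))\cong\bF$, and after inserting the grading shift dictated by the exact sequence this contributes the one-dimensional space $X\cong\Khred(T(\tfrac{1}{0}))[c_T,\half(1-c_T)]$. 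Each half-twist therefore yields an exact triangle
\[
\Khred(T(k)) \longrightarrow \Khred(T(k-1)) \overset{\partial}{\longrightarrow} X[\,\cdot\,] \longrightarrow,
\]
whose connecting homomorphism $\partial$ has bidegree $(1,1)$ and whose $u$-grading behaviour is governed by the constant $c_T$ exactly as in the braid-like case treated in Section~\ref{sec:inv}.

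Next I would iterate. Unfolding these triangles from level $n$ down to level $0$ expresses $\Khred(T(n))$ as an iterated mapping cone (a twisted complex) whose pieces are the base $\Khred(T(0))$ together with one shifted copy of $X$ for each half-twist, assembling into the direct sum $\bigoplus_i X[i,0]$ of the statement. Tracking the shifts coming from the long exact sequence, the $k$-th triangle contributes a copy of $X$ whose $u$-shift telescopes to the value recorded by the operator $[i,0]$, while the $-\half$ carried by each oriented-resolution term accumulates, over the $n$ steps from $T(n)$ to $T(0)$, into the overall $-\tfrac{n}{2}$ in the $\delta$-grading allowed for in the statement.

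The substance of the argument is to show that this twisted complex degenerates to the single map $\Khred(T(0))\overset{D}{\to}\bigoplus_i X[i,0]$, i.e.\ that no component of the differential runs between the $X$-summands and that no higher differentials among them survive. This is where the $\delta$-grading earns its keep: the shift operator $[i,0]$ is purely homological, so all of the summands $X[i,0]$ sit in one and the same $\delta$-grading, namely $\half(1-c_T)$, differing only in their $u$-grading. Since every map assembled from the connecting homomorphisms $\partial$ raises $\delta$ by $1$, no component of the total differential can map one $X[i,0]$ to another, and the same degree count rules out the higher differentials among them. The only surviving part of the differential is thus a single bidegree-$(1,1)$ map $D$ out of $\Khred(T(0))$, which spans a range of $\delta$-gradings and can genuinely hit the $X$-summands. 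Taking homology of the resulting two-term complex gives $\Khred(T(n))\cong\ker D\oplus\operatorname{coker}D$, as asserted.

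I expect the degeneration in the previous paragraph to be the main obstacle: a priori the iterated cone is only a twisted complex, carrying a spectral sequence whose higher differentials must be shown to vanish, and the genuine work is to check that every structural map retains bidegree $(1,1)$ after the cumulative shifts and that $-\tfrac{n}{2}$ is the only $\delta$-ambiguity introduced. The bidegree-$(1,1)$ constraint makes this a grading bookkeeping exercise rather than a geometric one. Should the homology-level argument prove delicate, a hands-on alternative is to run the same reduction at the chain level: delooping and Gaussian elimination reduce the Bar-Natan complex of the $n$-crossing twist region to a staircase whose closed-up terms are $\Khred(T(0))$ at one end and the copies of $X$ along the rest, after which the identical $\delta$-count forces the staircase maps between the $X$'s to cancel.
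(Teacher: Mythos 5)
Your proposal is correct and follows essentially the same route as the paper: iterate the skein long exact sequence down the twist region (each half-twist contributing a shifted copy of $X\cong\Khred(T(\frac{1}{0}))[c_T,\frac{1}{2}(1-c_T)]$ and a cumulative $[0,-\half]$ on the surviving closure), then use that all copies of $X$ lie in a single fixed $\delta$-grading while every connecting homomorphism has bidegree $(1,1)$, so no differentials --- including higher ones in the iterated cone --- can run between the $X$-summands, leaving only the two-term complex $\Khred(T(0))\overset{D}{\to}\bigoplus_{i=0}^n X[i,0]$. Your extra remarks (the twisted-complex framing and the chain-level delooping fallback) are sound but unnecessary; the paper disposes of the degeneration with exactly the $\delta$-grading count you give.
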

\begin{proof}
We fix the constant $c_T=n_-(T(\frac{1}{0}))-n_-(T(0))$ throughout, and let $n>0$ so that $c=n_-(T(\frac{1}{0}))-n_-(T(n))=c_T+n$. Now considering iterated applications of the long exact sequence (and minding gradings!), $\Khred(T(n))$ may be computed in terms of $\Khred(T(0))$:
\[
\begin{tikzpicture}[>=latex] 
\matrix (m) [matrix of math nodes, row sep=1em,column sep=2em]
{    \Khred(T(n)) &\\
 \Khred(T(n-1))[0,-\half]  &\Khred(T(\frac{1}{0}))[c_T+n,-\half(1-c_T-n)] \\ 
 \Khred(T(n-2))[0,-1] &  \Khred(T(\frac{1}{0}))[c_T+n-1,-\half(1-c_T-n)]  \\
   \vdots & \vdots\\
      \Khred(T(0))[0,-\frac{n}{2}] & \Khred(T(\frac{1}{0}))[c_T,-\half(1-c_T-n)] \\} ;
\path[->,font=\scriptsize]
(m-1-1) edge node[auto] {$ f_{n-1} $} (m-2-1)
(m-2-1) edge node[auto] {$ f_{n-2} $} (m-3-1)
(m-3-1) edge node[auto] {$ f_{n-3} $} (m-4-1)
(m-4-1) edge node[auto] {$ f_{0} $} (m-5-1)

(m-2-2) edge[<-] node[auto] {$ \partial $} (m-2-1)
(m-3-2) edge[<-] node[auto] {$ \partial$} (m-3-1)
(m-5-2) edge[<-] node[auto] {$ \partial$} (m-5-1)

;
\end{tikzpicture}
\] 
Recall that the connecting homomorphisms $\partial$ are of bi-degree $(1,1)$ and, in particular, raise the $\delta$-grading by one. As a result, since the occurrences of $\Khred(T(\frac{1}{0}))$ are in a fixed $\delta$-grading, this iterative process does not induce any maps between the $\Khred(T(\frac{1}{0}))$. Hence $\Khred(T(n))$ may be computed from a complex (or, mapping cone; see Weibel \cite{Weibel1994}, for example) of the form
\[
\begin{tikzpicture}[>=latex] 
\matrix (m) [matrix of math nodes, row sep=0.5em,column sep=2em]
{  
 & X[n,-\frac{n}{2}]  \\ 
 & X[n-1,-\frac{n}{2}]  \\
   & \vdots \\
      \Khred(T(0))[0,-\frac{n}{2}] &X[0,-\frac{n}{2}] \\} ;
\path[->]
(m-4-1) edge[bend left]   (m-1-2)
(m-4-1) edge[bend left]  (m-2-2)
(m-4-1) edge[bend left] node[above] {$\vdots$}  (m-4-2)
;
\end{tikzpicture}
\] 
where each of the depicted maps is induced from the connecting homomorphism. The total map $D$ therefore raises the bi-grading by $(1,1)$ and the homology of $D$ gives the result as claimed. 
\end{proof}

There are two immediate and important consequences of this observation.

\begin{corollary}[See {\cite[Lemma 4.14]{Watson2012}}]\label{cor:stability}
If $n\gg0$ then $\Khred(T(n+1))\cong\Khred(T(n))\oplus\bF$. $\hfill\ensuremath\Box$
\end{corollary}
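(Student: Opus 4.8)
The plan is to read the statement directly off the mapping cone description of Lemma~\ref{lem:stability}. Recall that for $n>0$ we have, up to an overall $-\tfrac{n}{2}$ in the $\delta$-grading,
\[\Khred(T(n))\cong H_*\Big(\Khred(T(0))\overset{D}{\to}\bigoplus_{i=0}^nX[i,0]\Big),\]
where $X\cong\bF$ is one-dimensional and $D$ is the total connecting map of bi-degree $(1,1)$. Since $\Khred(T(0))$ and each $X$ already carry trivial internal differential, this two-step complex has homology $\ker D$, supported in the $u$-gradings of $\Khred(T(0))$, together with $\operatorname{coker} D$, supported in the $u$-gradings of the tower of $X$'s. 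Concretely, writing the total space as $\Khred(T(0))\oplus\bigoplus_i X[i,0]$ with the only differential being $D$, one computes $H_*\cong\ker D\oplus\operatorname{coker} D$.

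The key observation is a grading bound. The vector space $\Khred(T(0))$ is finite dimensional, hence occupies a bounded window of $u$-gradings, say $u\in[u_{\min},u_{\max}]$. Since $X[i,0]$ is supported in the $u$-grading of $X$ shifted up by $i$, and $D$ raises the $u$-grading by exactly $1$, the component of $D$ landing in the summand $X[i,0]$ must vanish once $i$ exceeds a threshold $N_0$ determined by $u_{\max}$. Thus for all $i>N_0$ the summand $X[i,0]$ lies entirely in $\operatorname{coker} D$, contributing a single copy of $\bF$ to the homology.

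Now compare $T(n)$ and $T(n+1)$ for $n\geq N_0$. Passing from one to the other appends exactly one new summand $X[n+1,0]$ to the target of $D$, and the corresponding new component $\Khred(T(0))\to X[n+1,0]$ vanishes by the grading bound. Consequently $\ker D$ is unchanged, while $\operatorname{coker} D$ acquires precisely the extra summand $X[n+1,0]\cong\bF$. Therefore $\Khred(T(n+1))\cong\Khred(T(n))\oplus\bF$ for all $n\gg0$, as claimed.

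I expect the only real care to lie in the grading bookkeeping: one must confirm that the two-step complex genuinely has homology $\ker D\oplus\operatorname{coker} D$ (immediate once the internal differentials are trivial, as guaranteed by Lemma~\ref{lem:stability}), and that the overall $-\tfrac{n}{2}$ shift in the $\delta$-grading does not interfere with the $u$-graded comparison driving the argument. The substance of the proof is entirely the grading bound showing that the topmost generator in the $X$-tower cannot be cancelled by $D$ once $n$ is large.
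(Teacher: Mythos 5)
Your proof is correct and is exactly the argument the paper intends: the corollary is stated as an immediate consequence of Lemma \ref{lem:stability}, and your grading bound --- that the component of $D$ into $X[n+1,0]$ must vanish once $c_T+n+1$ exceeds the top $u$-grading of $\Khred(T(0))$ by more than the degree of $D$, so the new summand survives intact into $\operatorname{coker} D$ --- is precisely why the paper deems it immediate. Your closing cautions (that the two-term complex has homology $\ker D\oplus\operatorname{coker} D$ and that the uniform $-\tfrac{n}{2}$ shift lives only in the $\delta$-grading, hence cannot disturb the $u$-graded comparison) are well placed and correctly resolved.
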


\begin{corollary}\label{cor:inj/surj}
The map $f_i$ is surjective for all $i\gg 0$ and injective for all $i\ll 0$.  $\hfill\ensuremath\Box$
\end{corollary}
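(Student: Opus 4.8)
The plan is to reduce everything to a grading-by-grading analysis of the long exact sequence and then read off injectivity and surjectivity from a single dimension count, using crucially that $T(\frac{1}{0})$ is the trivial knot.

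First I would exploit that $T$ is braid-like, so that $f_i=f_i^+$ preserves the $\bZ$-grading, and examine the long exact sequence in each homological grading $u$. Since $\Khred(T(\frac{1}{0}))\cong\bF$ is supported in a single $u$-grading, the term resolving to the trivial knot contributes to the sequence only in the two consecutive gradings $u=c$ and $u=c+1$, where $c=c_T+i$. Hence $f_i\colon A_{i+1}^u\to A_i^u$ is an isomorphism for every $u\neq c,c+1$; it is injective in grading $c$ with cokernel of dimension at most one, and surjective in grading $c+1$ with kernel of dimension at most one. Tracking the connecting map $\partial$ through the two relevant segments, I would introduce $\epsilon_i\in\{0,1\}$, the rank of $\partial$ into $\bF$, and verify that $\dim\operatorname{coker}(f_i)=\epsilon_i$ (concentrated in grading $c$) while $\dim\ker(f_i)=1-\epsilon_i$ (concentrated in grading $c+1$). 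In particular $f_i$ is surjective exactly when $\epsilon_i=0$ and injective exactly when $\epsilon_i=1$.

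The next step is a dimension identity. From rank--nullity applied to $f_i\colon A_{i+1}\to A_i$ together with the previous paragraph, $\dim A_{i+1}-\dim A_i=\dim\ker(f_i)-\dim\operatorname{coker}(f_i)=1-2\epsilon_i$. For $i\gg0$, Corollary \ref{cor:stability} gives $\dim A_{i+1}=\dim A_i+1$, which forces $\epsilon_i=0$; thus $f_i$ is surjective for all $i\gg0$, as claimed.

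The remaining, and in my view most delicate, case is injectivity for $i\ll0$, since the stability statement of Lemma \ref{lem:stability} and Corollary \ref{cor:stability} is phrased only for positive twists. Here I would pass to the mirror. The tangle $T^*=T_{(K,h)^*}$ again has $T^*(\frac{1}{0})$ the trivial knot, so Corollary \ref{cor:stability} applies to it, and the behaviour of Khovanov homology under mirroring gives $\dim\Khred(T(j))=\dim\Khred(T^*(-j))$ for all $j$. Setting $n=-i$, stability for $T^*$ at large positive $n$ then yields $\dim A_{i+1}-\dim A_i=-1$ for $i\ll0$, which by the identity above forces $\epsilon_i=1$ and hence injectivity of $f_i$. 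The only points needing care are bookkeeping: confirming that mirroring carries the inverse system of $T$ at very negative indices to the positive end of the inverse system of $T^*$ (as indicated in the remark following Proposition \ref{prp:mirror}), and that the single grading in which surjectivity (respectively injectivity) could fail is precisely the one governed by $\epsilon_i$.
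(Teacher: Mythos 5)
Your proof is correct and follows essentially the route the paper leaves implicit behind its terminal $\Box$: since $\Khred(T(\frac{1}{0}))\cong\bF$ is one-dimensional, the long exact sequence forces $\dim A_{i+1}-\dim A_i=1-2\epsilon_i\in\{\pm1\}$ according to whether $f_i$ is surjective or injective, and Corollary \ref{cor:stability} (applied to $T$ for $i\gg0$, and via the mirror identity $(T(j))^*\simeq T^*(-j)$ from Proposition \ref{prp:mirror} to $T^*$ for $i\ll0$) pins down the sign. Your $\epsilon_i$ bookkeeping and the localisation of kernel and cokernel in gradings $c+1$ and $c$ merely make explicit what the paper treats as immediate from Lemma \ref{lem:stability}, Corollary \ref{cor:stability}, and the remark on the negative-crossing inverse system following Proposition \ref{prp:mirror}.
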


Corollary \ref{cor:inj/surj} is an essential observation: It ensures computability of $\kh(K,h)$ and $\KHT(T_{K,h})$. Note that {\em sufficiently large/small} in this context depends, in general, on the choice of representative for the sutured tangle. On the other hand, varying the choice of representative can be a useful trick for computing $\kh(K,h)$ (see Section \ref{sec:examples}).

\subsection{Detecting the trivial knot}The object $\kh(K,h)$ bears some similarities with a polynomial invariant $\eta(K,h)$ of strongly invertible knots considered by Sakuma \cite{Sakuma1986}. In particular, Sakuma proves that $\eta$  is zero for the trivial knot. However, Sakuma's invariant must also vanish for $(K,h)$ if $K$ is amphicheiral and $h$ is a unique strong inversion on $K$ (up to conjugacy) \cite[Proposition 3.4 (1)]{Sakuma1986}. A stronger statement holds for $\kh(K,h)$ (compare \cite[Section 4.6]{Watson2012}). 

\begin{named}{Theorem \ref{thm:unknot}}Let $(K,h)$ be a strongly invertible knot. Then $\kh(K,h)=0$ if and only if $K$ is the trivial knot. \end{named}

\parpic[r]{\includegraphics[scale=0.75]{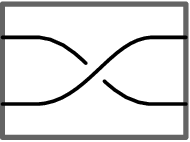}}
{\it Proof.} First suppose that $K$ is the trivial knot, and notice that $T_{K,h}(n)$ may be identified with the $(2,n-1)$-torus link. This choice of representative for the sutured tangle is illustrated on the right; notice that $T(1)$ is the two-component trivial link and $T(2)$ and $T(0)$ are both trivial knots. The result now follows from direct calculation (compare \cite[Section 6.2]{Khovanov2000}). To see this, observe that if some composition $f=f_i\circ\cdots\circ f_j$ of the inverse system is the zero map, then $\kh(K,h)$ must vanish. Indeed, in this situation any sequence $\{x_j\}$ for which $f_{j}(x_{j+1})=x_j$ and $x_j\in A_j$ must also satisfy $x_j=0$ for $j\ll0$. Hence $\boldA\cong\boldK$ (in the notation of Definition \ref{def:strong-inv}). 

We simply observe that $f_1\circ f_0=0$ in the present setting. In detail, the long exact sequence defining $f_1$ is 
\[\begin{tikzpicture}[>=latex] 
\matrix (m) [matrix of math nodes, row sep=1em,column sep=1em]
{ \Khred(T(2)) & & \Khred(T(1))[0,-\half] \\
& \Khred(T({\frac{1}{0}}))[1,0] & \\ };
\path[->,font=\scriptsize]
(m-1-1) edge node[auto] {$ f_1 $} (m-1-3)
(m-2-2.north west) edge (m-1-1.south east)
(m-1-3.south west) edge node[auto] {$ \partial $} (m-2-2.north east);
\end{tikzpicture}\]
which simplifies to  
\[\begin{tikzpicture}[>=latex] 
\matrix (m) [matrix of math nodes, row sep=1em,column sep=1em]
{ \bF^{(0,0)} & & \bF^{(0,0)}\oplus\bF^{(0,\miniminus1)}\\
& \bF^{(1,0)} & \\ };
\path[->,font=\scriptsize]
(m-1-1) edge node[auto] {$ f_1 $} (m-1-3)
(m-2-2) edge node[auto] {$0$} (m-1-1)
(m-1-3) edge  (m-2-2);
\end{tikzpicture}\]
so that, in particular, if $x_0$ generates $\bF^{(0,0)}$ then $f_1(x_0)=(x_0,0)\in \bF^{(0,0)}\oplus\bF^{(0,\miniminus1)}$. Similarly, to define $f_0$ we have 
\[\begin{tikzpicture}[>=latex] 
\matrix (m) [matrix of math nodes, row sep=1em,column sep=1em]
{ \Khred(T(1)) & & \Khred(T(0))[0,-\half] \\
& \Khred(T(\frac{1}{0}))[0,\half] & \\ };
\path[->,font=\scriptsize]
(m-1-1) edge node[auto] {$ f_0 $} (m-1-3)
(m-2-2.north west) edge (m-1-1.south east)
(m-1-3.south west) edge node[auto] {$ \partial $} (m-2-2.north east);
\end{tikzpicture}\]
which simplifies to give 
\[\begin{tikzpicture}[>=latex] 
\matrix (m) [matrix of math nodes, row sep=1em,column sep=1em]
{ \bF^{(0,\phalf)}\oplus\bF^{(0,\mhalf)} & & \bF^{(0,\mhalf)}\\
& \bF^{(0,\phalf)} & \\ };
\path[->,font=\scriptsize]
(m-1-1) edge node[auto] {$ f_0 $} (m-1-3)
(m-2-2) edge  (m-1-1)
(m-1-3) edge node[auto] {$0$} (m-2-2);
\end{tikzpicture}\]
where $f_0(x_{\phalf},x_{\mhalf})=(0,x_{\mhalf})$ given $(x_{\phalf},x_{\mhalf})\in \bF^{(0,\phalf)}\oplus\bF^{(0,\mhalf)}$. Composing these two maps  yields $(f_0\circ f_1)(x_0) = f_0(x_0,0)=0$ as claimed. 

The converse depends on a relationship with Heegaard Floer homology. Let $(K,h)$ be a strongly invertible knot and suppose that $\kh(K,h)\cong0$. For an appropriately chosen representative of $T=T_{K,h}$ we have that $S^3_n(K)$ is the two-fold branched cover  $\Br_{T(n)}$. 

Now let $n\gg 0$ so that by Corollary \ref{cor:stability} we have $\Khred(T(n+1))\cong\Khred(T(n))\oplus\bF$. Then given a graded section $\sigma\co \kh(K,h)\into \KHT(T_{K,h})$ we may write \begin{align*}\Khred(T(n+1))&\cong\kh(K,h)\oplus\bF^{m+1}\\  \Khred(T(n))&\cong\kh(K,h)\oplus\bF^{m}\end{align*} for some $m\ge0$. Applying Lemma \ref{lem:stability}, $\bF^{m}$ and $\bF^{m+1}$ are supported in a single (relative) $\delta$-grading. Indeed, these subspaces cannot be present in $\Khred(T(n))$ when $n\ll0$, so they must cancel in the associated iterated mapping cone. Note that $m\le n$ is determined by those $f_{i>0}$ that are surjective (as in Corollary \ref{cor:inj/surj}).
By choosing $n\gg 0$ the connecting homomorphism vanishes for grading reasons and we obtain a short exact sequence
\[\begin{tikzpicture}[>=latex] 
\matrix (m) [matrix of math nodes, row sep=1.5em,column sep=1.5em]
{ 0&\bF&{ \kh(K,h)\oplus\bF^{m+1}}& \kh(K,h)\oplus\bF^{m} & 0 \\
 };
\path[->,font=\scriptsize]
(m-1-1) edge (m-1-2) 
(m-1-2) edge (m-1-3) 
(m-1-3) edge node[above] {$ f_n $} (m-1-4) 
(m-1-4) edge (m-1-5) 
;
\end{tikzpicture}\]
(where the grading shifts have been suppressed). But $\kh(K,h)$ vanishes by hypotheses, so $\Khred(T(n))\cong \bF^m$ is supported in a single $\delta$-grading. It follows that \[m=\det(T(n))=|H_1(\Br_{T(n)};\bZ)|=|H_1(S_n(K);\bZ)|\] and $m=n$. 

Given a link $L$ there is a spectral sequence starting from $\Khred(L)$ and converging to $\HFhat(-\Br_L)$  (here, $-\Br_L$ denotes the two-fold branched cover $\Br_L$ with orientation reversed) \cite{OSz2005-branch}. In the present setting \[n=|H_1(S^3_n(K);\bZ)|\le\dim\HFhat(S^3_n(K))\le\dim\Khred(T(n)) = n\] so that $\dim\HFhat(S^3_n(K))=|H_1(S_n^3(K);\bZ)|$ and hence $S^3_n(K)$ is an L-space for all $n\gg0$. As a result, $g(K)=\tau(K)$, where $g(K)$ is the genus of the knot and $\tau(K)$ is the Ozsv\'ath-Szab\'o concordance invariant \cite[Proposition 3.3]{OSz2005-lens}. 

A nearly identical argument for $n\ll0$ shows that $S^3_n(K)$ is an L-space for all $|n|\gg 0$.  Thus $S^3_{-n}(K)\cong S^3_n(K^*)$ is an L-space for sufficiently large $n$ and hence $g(K)=g(K^*)=\tau(K^*)=-\tau(K)$ \cite[Lemma 3.3]{OSz2003}. It follows that  $\tau(K)=g(K)=0$ and hence $K$ is the trivial knot.$\hfill\ensuremath\Box$

\subsection{An aside on cabling} While not every knot $K$ is strongly invertible, it is always the case that $D(K)=K\# K$ is a strongly invertible knot (with canonical strong inversion that switches the components, which we will suppress from the notation). As a result, the relatively graded vector space resulting from the composite $\kh(D(-))$ is an invariant of knots in $S^3$ (compare \cite{HW2010}). This invariant detects the trivial knot as a consequence of Theorem \ref{thm:unknot}, and is closely related to work of  Grigsby and Wehrli. Indeed, we obtain an alternate proof of the following:

\begin{theorem}[Grigsby-Wehrli \cite{GW2010}, Hedden \cite{Hedden2009-unknot}]\label{thm:cable}
The Khovanov homology of the two-cable of a knot detects the trivial knot.
\end{theorem}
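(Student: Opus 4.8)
The plan is to leverage Theorem~\ref{thm:unknot} together with the correspondence between strongly invertible knots and sutured tangles (Proposition~\ref{prp:tangle}), applied to the specific strongly invertible knot $D(K)=K\#K$ equipped with its canonical switching involution. The key observation is that the two-fold branched cover appearing in the construction of $\kh(D(K))$ is precisely the exterior of $D(K)$, and the quotient tangle $T_{D(K)}$ is built from the two-cable of $K$; thus the closures $T_{D(K)}(n)$ carry the Khovanov homology of the two-cable. First I would make this identification explicit: starting from the arc-lift description in the proof of Proposition~\ref{prp:tangle}, I would check that lifting an unknotted arc meeting the two lobes of a diagram of $K\#K$ recovers $D(K)$ with its switching symmetry, and that the associated closures $T_{D(K)}(n)$ are (up to the usual twisting and closure conventions) the $n$-twisted two-cables of $K$.

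\emph{Next}, I would invoke Theorem~\ref{thm:unknot} directly: since $\kh(D(K))=0$ if and only if $D(K)=K\#K$ is the trivial knot, and since $K\#K$ is trivial if and only if $K$ is trivial (a connected sum is unknotted iff both summands are), we get that $\kh(D(K))$ vanishes exactly when $K$ is the unknot. The remaining task is to translate the vanishing of $\kh(D(K))$ into a statement about the Khovanov homology of the two-cable itself. Here I would use the stability results of Corollary~\ref{cor:stability} and Corollary~\ref{cor:inj/surj}: the vector space $\kh(D(K))$ is, by the colloquial description following Proposition~\ref{prp:subspace}, the largest graded subspace common to all the $A_j=\Khred(T_{D(K)}(j))$. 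Since each $A_j$ computes the Khovanov homology of a two-cable of $K$, the dimension of the reduced Khovanov homology of the two-cable controls the size of $\kh(D(K))$; in particular $\kh(D(K))$ is trivial precisely when these stabilised cable groups are as small as possible, which by the argument in Theorem~\ref{thm:unknot} forces the L-space/genus conclusion and hence triviality of $K$.

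\emph{The main obstacle} I anticipate is the bookkeeping in the first step: pinning down exactly which representative and which twisting convention identify $T_{D(K)}(n)$ with the two-cable, and confirming that the reduced Khovanov homology appearing in the inverse system $\{A_j\}$ matches the (reduced) Khovanov homology of the two-cable used in the Grigsby--Wehrli and Hedden formulations. One must be careful that the switching involution on $K\#K$ does indeed have quotient a braid-like sutured tangle with $T(\tfrac{1}{0})$ unknotted (so that Proposition~\ref{prp:tangle} applies) and that the branched double cover $\Br_{T(n)}$ is the correct surgery on $D(K)$. Once these identifications are secured, the detection statement follows immediately from Theorem~\ref{thm:unknot} with essentially no further analytic input, since the hard work---the spectral sequence to $\HFhat$ and the genus/$\tau$ argument---has already been carried out there. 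I would close by remarking that this gives a proof logically independent of the original arguments of Grigsby--Wehrli and Hedden, routing instead through the unknot-detection of $\kh$.
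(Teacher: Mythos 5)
There is a genuine gap, and it sits exactly where you wave your hands: the passage from $\dim\Khred(\sC_2K)=2$ back to the vanishing of $\kh(D(K))$. Theorem \ref{thm:unknot} gives you $\kh(D(K))=0$ if and only if $K$ is trivial, but the detection statement requires the converse direction starting from the \emph{cable}: assuming $\Khred(\sC_2K)\cong\bF^2$, you must conclude $K$ is trivial. The inclusion $\kh(D(K))\into\Khred(T(0))\cong\Khred(\sC_2K)$ only yields the upper bound $\dim\kh(D(K))\le 2$; your assertion that ``$\kh(D(K))$ is trivial precisely when these stabilised cable groups are as small as possible'' is exactly the unproven claim. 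Writing $\Khred(\sC_2K)\cong\kh(D(K))\oplus\bF^n$ with the $\bF^n$ in a single $\delta$-grading (Lemma \ref{lem:stability}), and using $0=\det(T(0))=|\chi_\delta\Khred(\sC_2K)|$, one can dispose of $n\ge 2$, but the cases $n=0$ with $\dim\kh(D(K))=2$ and $n=1$ with $\dim\kh(D(K))=1$ survive, and Theorem \ref{thm:unknot} says nothing about them. This is why the paper's proof is \emph{not} a formal corollary of Theorem \ref{thm:unknot}: it must import the Ozsv\'ath--Szab\'o spectral sequence together with the non-vanishing of $\HFhat(S^3_0(D(K)))$ to get $\HFhat(S^3_0(D(K)))\cong\bF^2$ \cite{OSz2005-branch}, primeness of $S^3_0(D(K))$ \cite{Scharlemann1990}, and the Hedden--Ni classification forcing $S^2\times S^1$ or $0$-surgery on a trefoil \cite{HN2010}, the latter ruled out by hand. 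So your closing claim that the argument needs ``essentially no further analytic input'' and is ``logically independent'' of the Heegaard Floer machinery is incorrect.

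What would rescue your route is precisely the statement that $\dim\kh(K,h)>2$ for every non-trivial strongly invertible knot, which would eliminate the two residual cases internally to Khovanov homology. The paper flags this explicitly after the proof of Theorem \ref{thm:cable} and it is only conjectural (a consequence of Conjecture \ref{con:structure}, which predicts $\dim\kh(K,h)\equiv 0 \bmod 4$ with a specific four-dimensional building block). Until that is known, your step from small cable homology to vanishing $\kh$ cannot be made, and the external Floer-theoretic input is unavoidable. Your first step --- the identification of $T_{D(K)}(0)$ with $\sC_2K$ and of $\Br_{T(n)}$ with surgeries on $D(K)$ --- is fine and matches the paper.
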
 

\begin{proof}[Sketch of Proof] Let $T$ be the tangle associated with the quotient of $D(K)$ with representative chosen so that  $S^3_0(D(K))\cong \Br_{T(0)}$. We appeal to two immediate facts. First, that $D(K)$ is the trivial knot if and only if $K$ is the trivial knot, and second, that $T(0)$ is the (untwisted) two-cable of $K$ denoted $\sC_2K$. We need to show  that if $\Khred(\sC_2K)\cong \bF^2$ then $\sC_2K$ is the two-component trivial link (and hence $K$ is trivial).  

Note that since $\kh(D(K))$ injects into $\Khred(T(0))\cong\Khred(\sC_2K)$ we may write $\Khred(\sC_2K)\cong\kh(D(K))\oplus\bF^n$ with the summand $\bF^n$ supported in a single $\delta$-grading (Lemma \ref{lem:stability}).  Since $\kh(D(K))$ vanishes only for the trivial knot (Theorem \ref{thm:unknot}) we may assume that $\kh(D(K))$ is non-zero. Furthermore, $0=\det(T(0))=|\chi_\delta\Khred(\sC_2K)|$ so if $n\ge 2$ we are done. This leaves two cases to consider: either $n=0$ and $\dim\kh(D(K))=2$ or $n=1$ and $\dim\kh(D(K))=1$. Notice that, in either case, $\Khred(\sC_2(K))\cong\bF^2$. 

Now applying the Ozsv\'ath-Szab\'o spectral sequence for the two-fold branched cover, together with the non-vanishing of $\HFhat(S^3_0(D(K)))$, we have that $\HFhat(S^3_0(D(K)))\cong\bF^2$ \cite{OSz2005-branch}. Now  $S^3_0(D(K))$ is prime \cite[Corollary 4.5]{Scharlemann1990} so by a result of  Hedden and Ni $S^3_0(D(K))$ must be $S^2\times S^1$ or  0-surgery on a trefoil  \cite[Theorem 1.1]{HN2010}. The latter can be ruled out by hand (see the first example of Section \ref{sec:examples}; compare \cite{HW2010}) hence $\sC_2K$ must be the two-component trivial link.  
\end{proof}

This proof is not appreciably different from those already in the literature and represents essentially a reorganizing of the data on the $E_2$-page of a spectral sequence. However, it illustrates an interesting point: The proof could be shortened and made more internal to Khovanov homology were it known that $\dim\kh(K,h)>2$ for $K$ non-trivial. This is worth advertising as something stronger appears to be true; see Conjecture \ref{con:structure}.

\section{Examples}
\label{sec:examples}

We now turn to calculations of $\kh(K,h)$ for some explicit examples. While this invariant is defined as a $\bZ$-graded vector space, in practice (as seen in establishing some of the properties in Section \ref{sec:properties}) it is useful to make use of the secondary (relative) $\bZ$-grading from $\delta=u-q$, which is described in Section \ref{sec:Kh}.

\subsection{Torus knots} We begin by giving a relatively detailed calculation for the invariant associated with the right-hand trefoil (our running example through the paper which we denote here by $K$). The strong inversion is shown in Figure \ref{fig:si} together with the quotient tangle. Note that the representative depicted satisfies $S^3_6(K)\cong\Br_{T(0)}$ since the connect sum in the branch set identifies the reducible surgery on this torus knot (see Moser \cite{Moser1971}). As a result, $T(-5)$ may be identified with the (negative) $(3,5)$-torus knot; see Figure \ref{fig:gradings}. 

It will be convenient to fix the preferred representative $T^\circ$ for this tangle satisfying $S^3_n(K)\cong\Br_{T^\circ(n)}$. With this choice we have $T^\circ(1)$ is the knot $10_{124}$ of Figure \ref{fig:gradings} and $T^\circ(6)$ is the connected sum as above. We focus on the portion 
\[\begin{tikzpicture}[>=latex] 
\matrix (m) [matrix of math nodes, row sep=1em,column sep=1em]
{\phantom{\cdot} &\Khred(T^\circ(6)) &  \Khred(T^\circ(5)) &  \Khred(T^\circ(4)) &  \Khred(T^\circ(3)) &  \Khred(T^\circ(2)) & \Khred(T^\circ(1)) & \phantom{\cdot}  \\
 };
\path[->,font=\scriptsize]
(m-1-1) edge (m-1-2)
(m-1-2) edge node[auto] {$ f_{5} $} (m-1-3)
(m-1-3) edge node[auto] {$ f_{4} $} (m-1-4)
(m-1-4) edge node[auto] {$ f_{3} $} (m-1-5)
(m-1-5) edge node[auto] {$ f_{2} $} (m-1-6)
(m-1-6) edge node[auto] {$ f_{1} $} (m-1-7)
(m-1-7) edge  (m-1-8);
\end{tikzpicture}\]
of the inverse system.  The key observation is that each of these maps, and indeed all of the $f_i$, are determined by $\Khred(T^\circ(1))$ and $\Khred(T^\circ(6))$ together with Lemma \ref{lem:stability}. Namely, in the notation of Lemma \ref{lem:stability} we have that 
\[\Khred(T^\circ(6)) \cong H_*\Big(\Khred(T^\circ(1)) \overset{D}{\to}\bF^5\Big)\]
and, since $T^\circ(6)$ is a connect sum of two-bridge knots, $\Khred(T^\circ(6))\cong\bF^6$ must be supported in a single $\delta$-grading (that is, alternating links are {\em thin}, see Lee \cite{Lee2005}). The precise (graded) form of this invariant is easily calculated and is given in Figure \ref{fig:trefoil-calc}.  On the other hand, from Figure \ref{fig:gradings} we have that $\Khred(T^\circ(1))\cong\bF^3\oplus\bF^4$ supported in two adjacent $\delta$-gradings. As a result, we have that 
\[\Khred(T^\circ(6))\cong H_*\Big(\Khred(T^\circ(1)) \overset{D}{\to} \bigoplus_{i=0}^4\bF^{(u=i-6)}\Big)\] 
up to an overall shift in the $\delta$-grading. 
\begin{figure}[tt]
\begin{tikzpicture}[>=latex] 

\fill [lightgray] (4,7.25)-- (3.5,7.25) -- (3.5,6)--(4,6);

\fill [lightgray] (3,7.25)-- (3.5,7.25) -- (3.5,4.75)--(3,4.75);

\fill [lightgray] (3,7.25)-- (2.5,7.25) -- (2.5,3.5)--(3,3.5);

\fill [lightgray] (2,7.25)-- (2.5,7.25) -- (2.5,2.25)--(2,2.25);

\fill [lightgray] (1,7.25)-- (1.5,7.25) -- (1.5,0)--(1,0);

\draw [gray] (4,-5) -- (4,7.25);
\draw [gray] (3.5,-5) -- (3.5,7.25);
\draw [gray] (3,-5) -- (3,7.25);
\draw [gray] (2.5,-5) -- (2.5,7.25);
\draw [gray] (2,-5) -- (2,7.25);
\draw [gray] (1.5,-5) -- (1.5,7.25);
\draw [gray] (1,-5) -- (1,7.25);
\draw [gray] (0.5,-5) -- (0.5,7.25);
\draw [gray] (0,-5) -- (0,7.25);
\draw [gray] (-0.5,-5) -- (-0.5,7.25);

\draw [gray] (-0.5,6.5) -- (5,6.5);
\draw [gray] (-0.5,6) -- (5,6);

\draw [gray] (-0.5,4.75) -- (5,4.75);
\draw [gray] (-0.5,5.25) -- (5,5.25);

\draw [gray] (-0.5,3.5) -- (5,3.5);
\draw [gray] (-0.5,4) -- (5,4);

\draw [gray] (-0.5,2.25) -- (5,2.25);
\draw [gray] (-0.5,2.75) -- (5,2.75);

\draw [gray] (-0.5,1) -- (5,1);
\draw [gray] (-0.5,1.5) -- (5,1.5);

\draw [gray] (-0.5,-0.25) -- (5,-0.25);
\draw [gray] (-0.5,0.25) -- (5,0.25);

\draw [gray] (-0.5,-1.5) -- (5,-1.5);
\draw [gray] (-0.5,-1) -- (5,-1);

\draw [gray] (-0.5,-2.25) -- (5,-2.25);
\draw [gray] (-0.5,-2.75) -- (5,-2.75);

\draw [gray] (-0.5,-3.5) -- (5,-3.5);
\draw [gray] (-0.5,-4) -- (5,-4);

\node at (-0.25,-4.75) {\footnotesize{-$7$}};
\node at (0.25,-4.75) {\footnotesize{-$6$}};
\node at (0.75,-4.75) {\footnotesize{-$5$}};
\node at (1.25,-4.75) {\footnotesize{-$4$}};
\node at (1.75,-4.75) {\footnotesize{-$3$}};
\node at (2.25,-4.75) {\footnotesize{-$2$}};
\node at (2.75,-4.75) {\footnotesize{-$1$}};
\node at (3.25,-4.75) {\footnotesize{$0$}};
\node at (3.75,-4.75) {\footnotesize{$1$}};

%\draw[ultra thick,->] (-0.5,-4) -- (4.5,-4);
%\node at (4.5,-4.125) {\footnotesize{$u$}};

\node at (4.75, -3.75) {$A_1$};

\node at (-0.25,-3.75) {$\circ$};
\node at (0.25,-3.75) {$\circ$};
\node at (1.25,-3.75) {$\circ$};

\node at (0.75,-3.75) {$\bu$};
\node at (1.75,-3.75) {$\bu$};
\node at (2.25,-3.75) {$\bu$};
\node at (3.25,-3.75) {$\bu$};

\node at (4.75, -2.5) {$A_2$};

%\node at (-0.25,-2.75) {$\circ$};
\node at (0.25,-2.5) {$\circ$};
\node at (1.25,-2.5) {$\circ$};

\node at (0.75,-2.5) {$\bu$};
\node at (1.75,-2.5) {$\bu$};
\node at (2.25,-2.5) {$\bu$};
\node at (3.25,-2.5) {$\bu$};

\node at (4.75, -1.25) {$A_3$};

\node at (1.25,-1.25) {$\circ$};

\node at (0.75,-1.25) {$\bu$};
\node at (1.75,-1.25) {$\bu$};
\node at (2.25,-1.25) {$\bu$};
\node at (3.25,-1.25) {$\bu$};

\node at (4.75, 0) {$A_4$};

\node at (1.25,-0.12) {$\circ$};
\node at (1.25,0.12) {$\bu$};

\node at (0.75,0) {$\bu$};
\node at (1.75,0) {$\bu$};
\node at (2.25,0) {$\bu$};
\node at (3.25,0) {$\bu$};

\node at (4.75, 1.25) {$A_5$};

\node at (1.25,1.25) {$\bu$};

\node at (0.75,1.25) {$\bu$};
\node at (1.75,1.25) {$\bu$};
\node at (2.25,1.25) {$\bu$};
\node at (3.25,1.25) {$\bu$};

\node at (4.75, 2.5) {$A_6$};

\node at (1.25,2.5) {$\bu$};

\node at (0.75,2.5) {$\bu$};
\node at (1.75,2.5) {$\bu$};
\node at (2.25,2.6) {$\bu$};\node at (2.25,2.4) {$\bu$};
\node at (3.25,2.5) {$\bu$};

\node at (4.75, 3.75) {$A_7$};

\node at (1.25,3.75) {$\bu$};

\node at (0.75,3.75) {$\bu$};
\node at (1.75,3.75) {$\bu$};
\node at (2.25,3.65) {$\bu$};\node at (2.25,3.85) {$\bu$};
\node at (3.25,3.75) {$\bu$};
\node at (2.75,3.75) {$\bu$};

\node at (4.75, 5) {$A_8$};

\node at (1.25,5) {$\bu$};

\node at (0.75,5) {$\bu$};
\node at (1.75,5) {$\bu$};
\node at (2.25,4.9) {$\bu$};\node at (2.25,5.1) {$\bu$};
\node at (2.75,5) {$\bu$};
\node at (3.25,4.9) {$\bu$};\node at (3.25,5.1) {$\bu$};

\node at (4.75, 6.25) {$A_9$};

\node at (1.25,6.25) {$\bu$};

\node at (0.75,6.25) {$\bu$};
\node at (1.75,6.25) {$\bu$};
\node at (2.25,6.15) {$\bu$};\node at (2.25,6.35) {$\bu$};
\node at (2.75,6.25) {$\bu$};
\node at (3.25,6.15) {$\bu$};\node at (3.25,6.35) {$\bu$};
\node at (3.75,6.25) {$\bu$};

\draw[thick,->>] (4.75,6) -- (4.75,5.25);
\draw[thick,->>] (4.75,4.75) -- (4.75,4);
\draw[thick,->>] (4.75,3.5) -- (4.75,2.75);
\draw[thick,->>] (4.75,2.25) -- (4.75,1.5);
\draw[thick,->] (4.75,1) -- (4.75,0.25);
\draw[thick,->] (4.75,-.25) -- (4.75,-1);
\draw[thick,left hook->] (4.75,-1.5) -- (4.75,-2.25);
\draw[thick,left hook->] (4.75,-2.75) -- (4.75,-3.5);

\node at (4.75,-4.25) {$\vdots$}; 
\node at (4.75,7) {$\vdots$}; 
\node at (0.75,7) {$\vdots$};
\node at (1.25,7) {$\vdots$};
\node at (1.75,7) {$\vdots$};
\node at (2.25,7) {$\vdots$};
\node at (2.75,7) {$\vdots$};
\node at (3.25,7) {$\vdots$};
\node at (3.75,7) {$\vdots$};
\node at (-3.5,-2.5) {\includegraphics[scale=0.75]{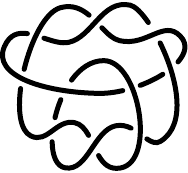}}; \draw [very thick, black,->] (-2.4,-3.1) .. controls (-2,-3.5) and (-1.25,-3.75) .. (-0.5,-3.75);

\node at (-4,1) {\includegraphics[scale=0.7]{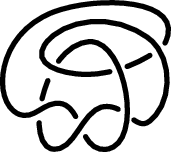}};  \draw [very thick, black,->] (-3,0.5) .. controls (-2.5,0) and (-1.25,1.25) .. (-0.5,1.25);
\node at (-2.5,2.75) {\includegraphics[scale=0.6]{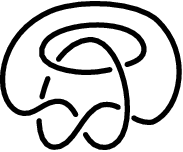}};  \draw [very thick, black,->] (-1.5,2.65) .. controls (-1,2.65) and (-1.25,2.5) .. (-0.5,2.5);
\node at (-4,4) {\includegraphics[scale=0.5]{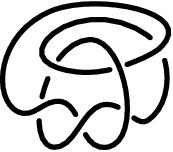}}; \draw [very thick, black,->] (-3.15,4.25) .. controls (-2,4.25) and (-1.25,3.75) .. (-0.5,3.75);
\end{tikzpicture}
\caption{Calculations for the unique strong inversion on $K$, the right-hand trefoil: The representative of the associated quotient tangle $T^\circ$ is chosen so that $T^\circ(1)\simeq 10_{124}$ (compare Figure \ref{fig:gradings}) and $T^\circ(6)$ is a connect sum of a trefoil and a Hopf link. There are two relative $\delta$-gradings distinguished here by the conventions that $\circ$ generates a copy of $\bF$ in grading $\delta$ and $\bu$ generates a copy of $\bF$ in grading $\delta+1$; recall that the connecting homomorphisms raise both $u$- and $\delta$-grading by one. With the convention that $A_i=\Khred(T^\circ(i))$ the sequences contributing to $\boldK\subset\boldA$ have been shaded so that, for example,  $\boldA^0\cong \bF^2$ while $\kh^0(K)\cong\bF$. }
\label{fig:trefoil-calc}
\end{figure}
The differential must cancel the off-diagonal $\bF^3$ to yield a thin knot; all other differentials are necessarily trivial. This calculation is summarized in Figure \ref{fig:trefoil-calc}. In particular, setting $A_i=\Khred(T^\circ(i))$ the long exact sequence splits in the cases
\[\begin{tikzpicture}[>=latex] 
\matrix (m) [matrix of math nodes, row sep=0.5em,column sep=1em]
{0 &\bF & A_{i+1}&  A_i &  0 & &  {\text{when}\ } i\ge5 \\
0 & A_{i+1}&  A_i & \bF & 0 && {\text{when}\ } i\le2\\
 };
\path[->,font=\scriptsize]
(m-1-1) edge (m-1-2)
(m-1-2) edge  (m-1-3)
(m-1-3) edge node[auto] {$ f_{i} $} (m-1-4)
(m-1-4) edge (m-1-5)
(m-2-1) edge (m-2-2)
(m-2-2) edge node[auto] {$ f_{i} $}  (m-2-3)
(m-2-3) edge  (m-2-4)
(m-2-4) edge (m-2-5)
;
\end{tikzpicture} \]
for grading reasons and hence $f_i$ is surjective for $i\ge5$ and $f_i$ is injective for $i\le2$. 

From the foregoing we calculate  
\[\KHT^u(T^\circ)\cong
\begin{cases}
\bF & u\ge1\\
\bF^2 & u=0\\ 
\bF & u = -1\\
\bF^2 & u=-2\\ 
\bF & u = -5,-4,-3 \\
0 & u\le-6
\end{cases}
\] and 
\[\kh^u(K)\cong
\begin{cases}
\bF & u=-5,-3,-2,0\\
0 & {\text{otherwise}}\\
\end{cases}
\]
where the unique strong inversion on $K$ has been suppressed from the notation. Alternatively, given a graded section $\sigma\co\kh(K)\into\KHT(T^\circ)$ we have 
\[\KHT(T^\circ)\cong\kh(K)\oplus u^{-4}\cdot W\]
 where $W$ is the subspace of the graded vector space $\bF[u]$ consisting of $\sum a_iu^i$ for which $a_1=0$. 

We will record the invariant by
\[
\begin{tikzpicture}[scale=0.85]

\draw[step=.5,black] (0,0) grid (3,.5);

\node at (-0.75,0.25) {$\kh(K)$};

\node at (0.25,.25) {$1$}; 

\node at (1.25,.25) {$1$}; 
\node at (1.75,.25) {$1$}; 

\node at (2.75,.25) {$1$}; 

\node at (0.25,-0.25) {\footnotesize{-$5$}}; 
\node at (0.75,-0.25) {\footnotesize{-$4$}}; 
\node at (1.25,-0.25) {\footnotesize{-$3$}}; 
\node at (1.75,-0.25) {\footnotesize{-$2$}}; 
\node at (2.25,-0.25) {\footnotesize{-$1$}}; 
\node at (2.75,-0.25) {\footnotesize{$0$}}; 

\end{tikzpicture}\]
as extracted from Figure \ref{fig:trefoil-calc}. Our convention (here, and in the examples to follow) is that the dimension of the vector space in each $u$-grading is recorded (with blank entries indicating dimension 0); and the $u$-grading (labelled along the bottom) is read from left-to-right, following the conventions in Figure \ref{fig:gradings}. 

\begin{remark}\label{rmk:Couture}Couture has defined a Khovanov-like invariant for signed divides which may be regarded as an invariant of strongly invertible knots \cite{Couture2009}. Consulting \cite[Section 3.6]{Couture2009} we see that Couture's invariant for the trefoil has dimension 6 and is supported in positive gradings, suggesting that our invariant is not an alternate formulation of Couture's. (In fact, the difference is perhaps more pronounced for the unknot where Couture's invariant has dimension 2 and $\kh$ vanishes; see Theorem \ref{thm:unknot}.) While a relationship between the two would be interesting, such a relationship seems unlikely: both invariants are extracted from auxiliary objects associated with a strong inversion however Couture defines an apparently new chain complex while $\kh$ appeals to stable/limiting behaviour of the long exact sequence.  \end{remark}

This trick of appealing to surgeries on torus knots may be applied more generally. We know, for example, that:

\begin{theorem}\label{thm:torus}
For any torus knot $K_{p,q}$ the invariant $\kh(K_{p,q})$ is thin, in the sense that the vector space is supported in a single (relative) $\delta$-grading. Moreover the dimension of   $\kh(K_{p,q})$ is bounded above by $|pq|-1$. 
\end{theorem}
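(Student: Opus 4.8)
The plan is to run the trefoil computation above in reverse, replacing the explicit calculation by Moser's classification of surgeries on torus knots. Since $K_{p,q}$ is a torus knot, Theorem \ref{thm:Schreier} provides a \emph{unique} strong inversion, so $\kh(K_{p,q})$ is unambiguous and the suppression of $h$ from the notation is justified. I would fix the preferred representative $T^\circ = T_{K_{p,q},h}$ with $S^3_n(K_{p,q}) \cong \Br_{T^\circ(n)}$ for all $n$. By Moser's theorem the surgery $S^3_{pq-1}(K_{p,q})$ is a lens space (and $S^3_{pq}(K_{p,q})$ is a connected sum of two lens spaces, as exploited for the trefoil with $pq=6$). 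I would work with the lens-space value $n = pq-1$, since it produces the stated bound directly.

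The crux is then to identify $A_{pq-1} = \Khred(T^\circ(pq-1))$ with the reduced Khovanov homology of an alternating link. Via the Montesinos correspondence underlying the construction of $T_{K,h}$, the branch set $T^\circ(pq-1)$ is a link whose two-fold branched cover is the lens space $S^3_{pq-1}(K_{p,q})$; combining the classical fact that lens spaces are the two-fold branched covers of two-bridge links with the explicit rational/Montesinos structure of the quotient tangle of a torus knot, this branch set is a two-bridge link, hence alternating. By Lee's theorem alternating links are thin, so $A_{pq-1}$ is supported in a single $\delta$-grading, and its dimension is pinned down by the determinant: since $\det\big(T^\circ(pq-1)\big) = |H_1(\Br_{T^\circ(pq-1)})| = |H_1(S^3_{pq-1}(K_{p,q}))| = pq-1$ and thinness forces $\dim A_{pq-1} = \det(T^\circ(pq-1))$, we get $\dim A_{pq-1} = pq-1$.

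Both assertions now follow from Proposition \ref{prp:subspace}: the maps $\iota_j$ realise $\kh(K_{p,q})$ as a graded subspace of every $A_j$, so in particular $\kh(K_{p,q})$ embeds in $A_{pq-1}$ as a graded subspace. As $A_{pq-1}$ is supported in a single relative $\delta$-grading, so is its subspace $\kh(K_{p,q})$, giving thinness; and $\dim \kh(K_{p,q}) \le \dim A_{pq-1} = pq-1$ gives the bound. One could equally run the argument through the reducible value $n=pq$, where $T^\circ(pq)$ is a connected sum of two-bridge links (hence alternating and thin), as in Figure \ref{fig:trefoil-calc}; this yields thinness and the weaker bound $\dim \le pq$, which must then be sharpened by one. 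For the mirror case $pq<0$ one appeals to Proposition \ref{prp:mirror}.

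The main obstacle is exactly the geometric identification of the second paragraph: passing from the surgery-theoretic input, which Moser's theorem phrases in terms of the two-fold branched cover $\Br_{T^\circ(n)}$, to the statement that the \emph{branch set} $T^\circ(pq-1)$ is itself a two-bridge — and hence Khovanov-thin — link. Knowing only that the double cover is a lens space is not enough: the Ozsv\'ath--Szab\'o spectral sequence of \cite{OSz2005-branch} yields only the lower bound $\dim \Khred(T^\circ(pq-1)) \ge \dim \HFhat(S^3_{pq-1}(K_{p,q})) = pq-1$, not thinness. The real content is therefore to make the Montesinos correspondence explicit for the quotient tangle of a torus knot, so that the two-bridge nature of $T^\circ(pq\pm1)$ is visible at the level of diagrams, exactly as the connected sum of a trefoil and a Hopf link was visible for $T^\circ(6)$ in the trefoil example.
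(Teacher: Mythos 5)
Your proof has the same skeleton as the paper's: Moser's theorem at the slope $pq-1$, thinness of the branch set via Lee's theorem, the determinant count $\dim \Khred(T^\circ(pq-1)) = pq-1$ forced by thinness, and the graded injection $\iota_{pq-1}\co\kh(K_{p,q})\into\Khred(T^\circ(pq-1))$ supplied by Proposition \ref{prp:subspace}, with mirrors disposing of the case $pq<0$. The single step you leave open --- and explicitly flag as the ``main obstacle'' --- is exactly where you diverge from the paper, and your caution there is misplaced: knowing only that the two-fold branched cover is a lens space \emph{is} enough to conclude that the branch set is two-bridge. This is the content of the theorem of Hodgson and Rubinstein \cite{HR1985}: a lens space admits a unique realisation as a two-fold branched cover of $S^3$ (the covering involution is unique up to conjugacy), so any link whose two-fold branched cover is $S^3_{pq-1}(K_{p,q})$ must be the two-bridge link corresponding to that lens space. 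The paper cites precisely this result, and no explicit Montesinos-type diagrammatic identification of $T^\circ(pq-1)$ is required; your proposed route of making the two-bridge structure visible at the level of diagrams would work but is strictly harder than necessary. (Your alternative through the reducible slope $n=pq$ is likewise unnecessary, and as you note it only yields the weaker bound $pq$.) With the Hodgson--Rubinstein citation substituted for your second paragraph, your argument is complete and coincides with the paper's.
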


\begin{proof} Up to taking mirrors it suffices to consider the case $p,q>0$. Fix the representative $T^\circ$ for the quotient tangle of $K_{p,q}$ satisfying $S^3_n(K_{p,q})\cong\Br_{T^\circ(n)}$.  Then by a result of Moser we have that $S^3_{pq-1}(K_{p,q})$ is a lens space \cite{Moser1971} so that the branch set $T^\circ(pq-1)$ must be a two-bridge knot by work of Hodgson and Rubinstein \cite{HR1985}.  Now Lee's results establish that $\Khred(T(pq-1))$ is supported in a single $\delta$-grading \cite{Lee2005}. As a result  \[pq-1=|H_1(S^3_{pq-1}(K_{p,q});\bZ)|=\det(T(pq-1)) = |\chi_\delta\Khred(T^\circ(pq-1))| = \dim\Khred(T^\circ(pq-1))\] and both statements follow on observing that $\iota_{pq-1}\co\kh(K_{p,q}) \to \Khred(T^\circ(pq-1))$ is injective, where $\iota_{pq-1}=\pi_{pq-1}\circ\sigma$, for any choice of graded section $\sigma\co\kh(K_{p,q})\into\KHT(T^\circ)$.
\end{proof}

As a result, the same procedure described for the trefoil may be applied to determine $\kh(K_{p,q})$ for any integers $p,q$ (again, omitting the unique strong inversion from the notation). For example, the (positive) torus knots $5_1=K_{2,5}$ and $8_{19}=K_{3,4}$ yield 

\[\begin{tikzpicture}[scale=0.85]

\draw[step=.5,black] (0,0) grid (5,1.5);

\node at (-0.85,1.25) {$\kh(5_1)$};

\node at (0.25,1.25) {$1$}; 
\node at (0.75,1.25) {$1$}; 
\node at (1.25,1.25) {$1$}; 
\node at (1.75,1.25) {$2$}; 
\node at (2.25,1.25) {$1$}; 
\node at (2.75,1.25) {$1$}; 
\node at (3.25,1.25) {$1$}; 
 
\node at (-0.85,0.25) {$\kh(8_{19})$};

\node at (1.75,.25) {$1$}; 
\node at (2.25,.25) {$1$}; 
\node at (2.75,.25) {$1$}; 
\node at (3.25,.25) {$2$}; 
\node at (3.75,.25) {$1$}; 
\node at (4.25,.25) {$1$}; 
\node at (4.75,.25) {$1$}; 

\node at (0.25,-0.25) {\footnotesize{-$2$}}; 
\node at (0.75,-0.25) {\footnotesize{-$1$}}; 
\node at (1.25,-0.25) {\footnotesize{$0$}}; 
\node at (1.75,-0.25) {\footnotesize{$1$}}; 
\node at (2.25,-0.25) {\footnotesize{$2$}}; 
\node at (2.75,-0.25) {\footnotesize{$3$}}; 
\node at (3.25,-0.25) {\footnotesize{$4$}}; 
\node at (3.75,-0.25) {\footnotesize{$5$}}; 
\node at (4.25,-0.25) {\footnotesize{$6$}}; 
\node at (4.75,-0.25) {\footnotesize{$7$}}; 

\draw [white, fill=white] (-0.015,0.515) rectangle (5.015,0.985);

\end{tikzpicture}\]

In both cases the invariant is supported in a single $\delta$-grading in agreement with Theorem \ref{thm:torus}. Notice that these examples are not distinguished as ungraded or relatively graded vector spaces, establishing  the (absolute) $u$-grading as an essential part of the invariant. 

It is interesting to compare this calculation with the behaviour of knot Floer homology \cite{OSz2004-knot, Rasmussen2003} for these examples: One can verify that $\dim\HFK(5_1)=\dim\HFK(8_{19})=5$ but that $\HFK(5_1)\ncong\HFK(8_{19})$ as graded vector spaces.

\subsection{Distinguishing strong inversions}\label{sec:dist} For all remaining examples we will state the result of our calculation, while specifying the strong inversion and the associated quotient tangle so that the reader can reproduce our work if desired. Our interest in this section will be on distinguishing strongly invertible knots $(K,h_1)$ and $(K,h_2)$, or, separating conjugacy classes in $\Sym(S^3,K)$.     

\begin{figure}[]
\includegraphics[scale=0.5]{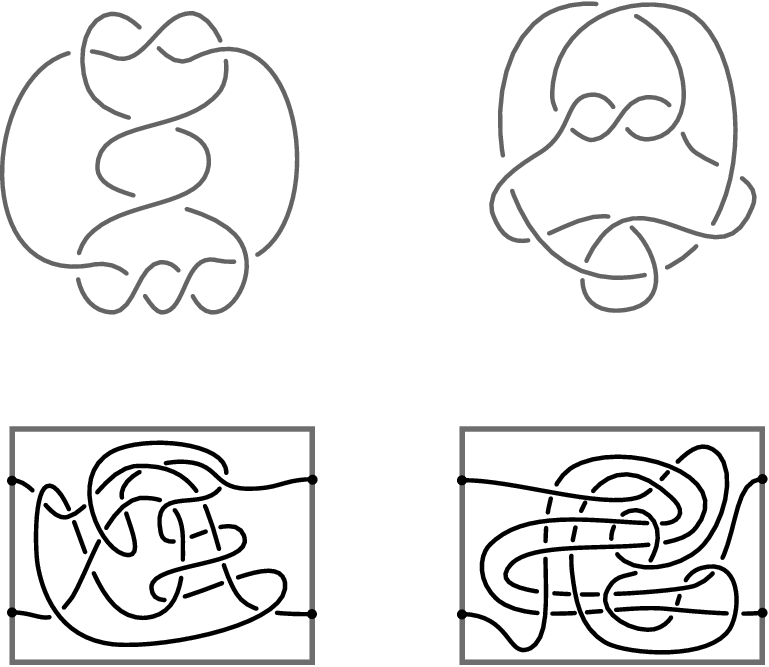} 
\caption{Two strong inversions $h_1$ and $h_2$ (left and right) on the knot $K=9_9$ with the relevant quotient tangle in each case. Note that according to Sakuma $\eta(K,h_1)=\eta(K,h_2)=-2t^{-2}+4-2t^2$ \cite{Sakuma1986}.}\label{fig:9_9}\end{figure}

\begin{figure}[]
\includegraphics[scale=0.5]{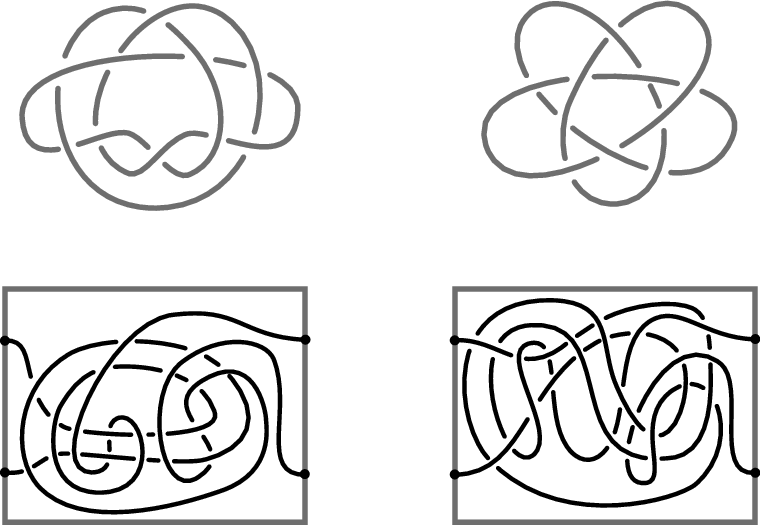} 
\caption{Two strong inversions $h_1$ and $h_2$ (left and right) on the knot $K=10_{155}$ with the relevant quotient tangle in each case. Note that according to Sakuma $\eta(K,h_1)=0$ and $\eta(K,h_2)=2t^{-2}-4+2t^2$ \cite{Sakuma1986}.}\label{fig:10_155}\end{figure}

For the first example the underlying knot is $9_9$. This knot admits a pair of strong inversions $h_1$ and $h_2$ (see Figure \ref{fig:9_9}) and is noteworthy as Sakuma's invariant fails to separate $(9_9,h_1)$ and $(9_9,h_2)$. We compute:

\[\begin{tikzpicture}[scale=0.85]
%\draw[step=.5,black,thin,xshift=0.5cm,yshift=0.5cm] (0.5,0.5) grid (16.5,3);

\draw[step=.5,black] (0,0) grid (9.5,1.5);

\node at (-2,1.25) {$\kh(9_9,h_1)$};

\node at (0.25,1.25) {$1$}; 
\node at (0.75,1.25) {$2$}; 
\node at (1.25,1.25) {$2$}; 
\node at (1.75,1.25) {$3$}; 
\node at (2.25,1.25) {$5$}; 
\node at (2.75,1.25) {$5$}; 
\node at (3.25,1.25) {$5$}; 
\node at (3.75,1.25) {$6$}; 
\node at (4.25,1.25) {$6$}; 
\node at (4.75,1.25) {$5$}; 
\node at (5.25,1.25) {$6$}; 
\node at (5.75,1.25) {$5$}; 
\node at (6.25,1.25) {$3$}; 
\node at (6.75,1.25) {$3$}; 
\node at (7.25,1.25) {$2$}; 
\node at (7.75,1.25) {$1$};

\node at (-2,0.25) {$\kh(9_9,h_2)$};

\node at (2.25,0.25) {$1$}; 
\node at (2.75,0.25) {$1$}; 
\node at (3.25,0.25) {$1$}; 
\node at (3.75,0.25) {$3$}; 
\node at (4.25,0.25) {$5$}; 
\node at (4.75,0.25) {$5$}; 
\node at (5.25,0.25) {$7$}; 
\node at (5.75,0.25) {$8$}; 
\node at (6.25,0.25) {$7$}; 
\node at (6.75,0.25) {$7$}; 
\node at (7.25,0.25) {$6$}; 
\node at (7.75,0.25) {$4$}; 
\node at (8.25,0.25) {$2$};
\node at (8.75,0.25) {$2$};
\node at (9.25,0.25) {$1$};

\node at (0.25,-0.25) {\footnotesize{-$8$}}; 
\node at (1.25,-0.25) {\footnotesize{-$6$}}; 
\node at (2.25,-0.25) {\footnotesize{-$4$}}; 
\node at (3.25,-0.25) {\footnotesize{-$2$}}; 
\node at (4.25,-0.25) {\footnotesize{$0$}}; 
\node at (5.25,-0.25) {\footnotesize{$2$}}; 
\node at (6.25,-0.25) {\footnotesize{$4$}}; 

\node at (7.25,-0.25) {\footnotesize{$6$}}; 
\node at (8.25,-0.25) {\footnotesize{$8$}}; 
\node at (9.25,-0.25) {\footnotesize{$10$}};

\draw [white, fill=white] (-0.015,0.515) rectangle (9.515,0.985);

\end{tikzpicture}\]

This example indicates that strong inversions need not be explicit on a given diagram, highlighting a subtlety in separating conjugacy classes. In fact, as pointed out by Paoluzzi \cite{Paoluzzi2005}, the fixed point sets of two strong inversions can be linked in $S^3$ in interesting ways. For example, $10_{155}$ admits a pair of strong inversions $h_1$ and $h_2$ (see Figure \ref{fig:10_155}) for which $\Fix(h_1)\cup\Fix(h_2)$ form a Hopf link. This is not made apparent in our diagrams; see \cite[Figure 10]{Paoluzzi2005} or \cite[Figure 3.1 (b)]{Sakuma1986}.  For this example we calculate:

\[
\begin{tikzpicture}[scale=0.85]

\draw[step=.5,black] (0,0) grid (7.5,1.5);
 
\node at (-2,.75) {$\kh(10_{155},h_1)$};

\node at (0.25,1.25) {$1$}; 
\node at (0.75,1.25) {$2$}; 
\node at (1.25,1.25) {$2$}; 
\node at (1.75,1.25) {$3$}; 
\node at (2.25,1.25) {$3$}; 
\node at (2.75,1.25) {$2$}; 
\node at (3.25,1.25) {$2$}; 
\node at (3.75,1.25) {$1$}; 

\node at (2.25,0.75) {$2$}; 
\node at (2.75,0.75) {$3$}; 
\node at (3.25,0.75) {$3$}; 
\node at (3.75,0.75) {$5$}; 
\node at (4.25,0.75) {$4$}; 
\node at (4.75,0.75) {$3$}; 
\node at (5.25,0.75) {$3$}; 
\node at (5.75,0.75) {$1$}; 

\node at (4.25,0.25) {$1$}; 
\node at (4.75,0.25) {$1$}; 
\node at (5.25,0.25) {$1$}; 
\node at (5.75,0.25) {$2$}; 
\node at (6.25,0.25) {$1$}; 
\node at (6.75,0.25) {$1$}; 
\node at (7.25,0.25) {$1$}; 
\end{tikzpicture}\]

\[\begin{tikzpicture}[scale=0.85]

\draw[step=.5,black] (0,0) grid (10.5,2.5);
 
\node at (-2,1.25) {$\kh(10_{155},h_2)$};

\node at (0.25,2.25) {$1$}; 
\node at (0.75,2.25) {$1$}; 
\node at (1.25,2.25) {$1$}; 
\node at (1.75,2.25) {$2$}; 
\node at (2.25,2.25) {$1$}; 
\node at (2.75,2.25) {$1$}; 
\node at (3.25,2.25) {$1$}; 

\node at (1.75,1.75) {$1$}; 
\node at (2.25,1.75) {$2$}; 
\node at (2.75,1.75) {$3$}; 
\node at (3.25,1.75) {$3$}; 
\node at (3.75,1.75) {$4$}; 
\node at (4.25,1.75) {$3$}; 
\node at (4.75,1.75) {$2$}; 
\node at (5.25,1.75) {$2$}; 

\node at (3.75,1.25) {$1$}; 
\node at (4.25,1.25) {$1$}; 
\node at (4.75,1.25) {$2$}; 
\node at (5.25,1.25) {$2$}; 
\node at (5.75,1.25) {$2$}; 
\node at (6.25,1.25) {$2$}; 
\node at (6.75,1.25) {$1$}; 
\node at (7.25,1.25) {$1$}; 

\node at (5.75,.75) {$1$}; 

\node at (6.75,.75) {$1$}; 
\node at (7.25,.75) {$1$}; 

\node at (8.25,0.75) {$1$}; 

\node at (7.75,.25) {$1$}; 

\node at (8.75,0.25) {$1$}; 
\node at (9.25,.25) {$1$}; 

\node at (10.25,0.25) {$1$};

\end{tikzpicture}\]

As illustration, we have omitted the absolute $\bZ$-grading and recorded instead $\kh(10_{155},h_1)$ and $\kh(10_{155},h_2)$ as relatively $(\bZ\times\bZ)$-graded vector spaces. This highlights considerable additional structure.

These (and other) examples point to an obvious question:

\begin{question}\label{qst:seperate}Does the invariant $\kh$ separate conjugacy classes of strong inversions in $\Sym(S^3,K)$ for a given prime knot? That is, given a prime knot $K$ admitting strong inversions $h$ and $h'$ is it the case that $(K,h)\simeq(K,h')$ if and only if $\kh(K,h)\cong\kh(K,h')$ as graded vector spaces? \end{question}

\begin{remark}\label{rmk:fig8} The emphasis on the grading is essential in this question: The figure eight admits a pair of strong inversions that are not distinguished by $\kh$ as relatively graded vector spaces but are distinguished by the absolute grading; see Section \ref{sec:bi}.\end{remark}

\section{Detecting non-amphicheirality}\label{sec:amph}

Recall that a knot $K$ is amphicheiral if $K\simeq K^*$, where $K^*$ denotes the mirror image of $K$, obtained by reversing orientation on $S^3$. For strongly invertible knots we write $(K,h)^*$; notice that if $h$ is unique then $(K,h)^*\simeq (K^*,h)$ makes sense (compare Section \ref{sub:mirror}). Regarding amphicheirality, Sakuma observes the following:

\begin{proposition}[Sakuma {\cite[Proposition 3.4 (1)]{Sakuma1986}}]
Let $K$ be an amphicheiral knot and suppose that $h$ is a unique strong inversion on $K$ (up to conjugacy in $\Sym(S^3,K)$). Then $(K,h)\simeq(K^*,h)$ and $\eta(K,h)$ vanishes. $\ensuremath\hfill\Box$
\end{proposition}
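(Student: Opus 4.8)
The plan is to prove the two assertions in sequence: first the topological equivalence $(K,h)\simeq(K^*,h)$, and then to deduce the vanishing of $\eta$ as a formal consequence of this self-duality together with the behaviour of $\eta$ under mirror images. The guiding principle is that $\eta$ is an Alexander-type polynomial that is \emph{negated} under reversal of the orientation of $S^3$; once $(K,h)$ is shown to be equivalent to its own mirror, $\eta(K,h)$ must equal its own negative and hence vanish.

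For the equivalence, I would argue as follows. Since $K$ is amphicheiral there is an orientation-reversing homeomorphism $\rho\co S^3\to S^3$ with $\rho(K)=K$, obtained by composing a fixed reflection $r$ realising the mirror with an orientation-preserving isotopy $g$ carrying $K^*=r(K)$ back to $K$. By the construction of the strongly invertible mirror, $(K,h)^*\simeq(K,\rho h\rho^{-1})$. A short check shows $\rho h\rho^{-1}$ is again a strong inversion on $K$: it is an orientation-preserving involution fixing $K$ setwise, and the sign computation $\epsilon\cdot(-1)\cdot\epsilon=-1$ (where $\epsilon=\pm1$ records the action of $\rho$ on the orientation of $K$) shows it reverses the orientation of $K$. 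Invoking the hypothesis that $h$ is the \emph{unique} strong inversion up to conjugacy, $\rho h\rho^{-1}$ is conjugate to $h$ in $\Sym(S^3,K)$, so $(K,h)^*\simeq(K,\rho h\rho^{-1})\simeq(K,h)$. Under the convention that a unique $h$ renders $(K,h)^*=(K^*,h)$ unambiguous, this is exactly $(K,h)\simeq(K^*,h)$.

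For the vanishing, I would combine the equivalence just established with the transformation law for $\eta$ under mirroring. Invariance of $\eta$ under equivalence gives $\eta(K,h)=\eta(K^*,h)=\eta\big((K,h)^*\big)$, while the mirror formula gives $\eta\big((K,h)^*\big)(t)=-\eta(K,h)(t^{-1})$. Since these $\eta$-polynomials are palindromic (as is visible in the examples of Figures \ref{fig:9_9} and \ref{fig:10_155}), $\eta(K,h)(t^{-1})=\eta(K,h)(t)$, and combining the two identities yields $\eta(K,h)=-\eta(K,h)$; as $\eta$ has integer coefficients this forces $\eta(K,h)=0$. The genuine content, and the step I expect to be the main obstacle, is establishing the mirror transformation law with its crucial minus sign: this is not formal but requires unwinding Sakuma's definition of $\eta$ (via the associated link of the quotient, together with its Seifert/linking data) and verifying that reversing the orientation of $S^3$ negates the invariant rather than merely applying $t\mapsto t^{-1}$. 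Everything else in the argument is bookkeeping around the uniqueness hypothesis and the palindromic symmetry.
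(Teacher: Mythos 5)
The paper itself contains no proof of this statement: it is quoted from Sakuma (\cite{Sakuma1986}, Proposition 3.4(1)) with a terminating $\Box$, and the invariant $\eta$ is never even defined in this paper, so there is no internal argument to compare yours against. Judged on its own terms, your proposal follows the natural (and, structurally, Sakuma's) route, and the first half is complete and correct: $\rho h\rho^{-1}$ is orientation-preserving, your sign computation $\epsilon\cdot(-1)\cdot\epsilon=-1$ shows it is again a strong inversion on $K$, and uniqueness up to conjugacy gives $(K,h)^*\simeq(K,h)$. One small point to make explicit: Definition \ref{def:strong-knot} requires the conjugating homeomorphism to be orientation-preserving, so ``unique up to conjugacy'' must be read in that sense (as it is in this paper and in Sakuma) for the conjugacy of $\rho h\rho^{-1}$ with $h$ to upgrade to an equivalence of strongly invertible knots.

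For the second half, the logic $\eta(K,h)=\eta\bigl((K,h)^*\bigr)=-\eta(K,h)$, hence $\eta(K,h)=0$, is exactly right, but the two inputs you defer --- the symmetry $\eta(t)=\eta(t^{-1})$ and the mirror law with its minus sign --- should be citations rather than open obstacles. Sakuma defines $\eta(K,h)$ as the Kojima--Yamasaki $\eta$-function of a two-component link (with linking number zero) canonically associated to $(K,h)$, and both the invariance under $t\mapsto t^{-1}$ and the negation under mirror image are established properties of that function in Kojima--Yamasaki's work, which Sakuma invokes; the palindromicity visible in Figures \ref{fig:9_9} and \ref{fig:10_155} is a symptom of this, not evidence one needs. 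So what you flag as ``the main obstacle'' is in fact settled in the literature your proof would cite; with that reference inserted, your argument is complete and agrees in structure with Sakuma's original proof.
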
 

Sakuma points out that for all but two strongly invertible hyperbolic knots with 9 or fewer crossings non-amphicheirality is detected by this condition (or a closely related condition \cite[Proposition 3.4 (2)]{Sakuma1986}). The exceptions are $8_{20}$ and $9_{40}$. The latter has non-zero signature ruling out amphicheirality,  but the former has vanishing signature and Sakuma invariant. 

Despite the fact that the amphicheirality of the tabulated knots is well established \cite{Perko1974}, this does raise an interesting question about the nature of algebraic invariants capable of detecting this subtle property. Along these lines, the non-vanishing result established in Theorem \ref{thm:unknot} suggests  that $\kh(K,h)$ is a good candidate invariant for detecting non-amphicheirality. We have:

\begin{proposition}\label{prp:amph}
Let $K$ be an amphicheiral knot and suppose that $h$ is a unique strong inversion on $K$ (up to conjugacy in $\Sym(S^3,K)$). Then $(K,h)\simeq(K^*,h)$ and $\kh(K,h)\cong\kh(K^*,h)$ as graded vector spaces. 
\end{proposition}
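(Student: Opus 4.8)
The plan is to combine the two symmetry statements already established: the mirror behaviour of Proposition \ref{prp:mirror}, which gives $\kh^u(K,h)^* \cong \kh^{-u}(K,h)$ as $\bZ$-graded vector spaces, and the hypothesis that $K$ is amphicheiral with a unique strong inversion. First I would observe that since $h$ is the unique strong inversion (up to conjugacy) and $K \simeq K^*$, the strongly invertible mirror $(K,h)^*$ must coincide with $(K^*,h)$ as a conjugacy class of strong inversions; this is exactly the statement $(K,h) \simeq (K^*,h)$ recorded in Sakuma's proposition above, and it identifies the tangle $T^*_{K,h}$ with $T_{K^*,h}$. Consequently $\kh(K,h)^* = \kh(K^*,h)$ by definition of the strongly invertible mirror, so the content of the proposition is that $\kh(K,h) \cong \kh(K,h)^*$ as \emph{graded} vector spaces.

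Next I would invoke Proposition \ref{prp:mirror} directly: it gives the graded isomorphism $\kh^u(K,h)^* \cong \kh^{-u}(K,h)$. Combining this with the identification $\kh(K,h)^* = \kh(K^*,h)$ from the previous step yields $\kh^u(K^*,h) \cong \kh^{-u}(K,h)$. The remaining task is to upgrade this to the unshifted statement $\kh^u(K,h) \cong \kh^u(K^*,h)$. For this I would use amphicheirality again, now at the level of the knot invariant rather than the strong inversion: since $K \simeq K^*$ and $h$ is unique, the invariant satisfies $\kh^u(K,h) \cong \kh^u(K^*,h)$ because the pairs $(K,h)$ and $(K^*,h)$ are equivalent strongly invertible knots (Definition \ref{def:strong-knot}), and $\kh$ is an invariant of the equivalence class by Proposition \ref{prp:subspace}.

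The heart of the argument is therefore almost entirely bookkeeping about which symmetry acts on which factor: one must be careful to distinguish the action of reversing orientation on $S^3$ (the mirror, which flips the $u$-grading) from the fact that the underlying \emph{unoriented} knot type is preserved under amphicheirality (which does not flip the grading). The main obstacle I anticipate is precisely ensuring that the uniqueness of $h$ is used correctly to guarantee that the strong inversion on $K^*$ is conjugate to $h$ and not to some other involution; without uniqueness, reversing orientation on $S^3$ could send $h$ to a different conjugacy class, and the identification $(K,h)^* \simeq (K^*,h)$ would fail. Once this identification is secured, the graded isomorphism $\kh(K,h) \cong \kh(K^*,h)$ follows formally by composing the invariance of $\kh$ under the equivalence $(K,h) \simeq (K^*,h)$ with the grading-preserving consistency of Proposition \ref{prp:mirror}.
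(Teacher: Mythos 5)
Your proposal is correct and takes essentially the same route as the paper, whose one-line proof likewise derives the statement from Proposition \ref{prp:mirror} combined with the identification $(K,h)^*\simeq(K^*,h)$ that amphicheirality and the uniqueness of $h$ provide (Sakuma's observation, quoted just above the proposition). Your detour through the shifted isomorphism $\kh^u(K^*,h)\cong\kh^{-u}(K,h)$ is logically unnecessary for the literal statement --- invariance of $\kh$ under the equivalence $(K,h)\simeq(K^*,h)$ already yields the graded isomorphism --- but it is precisely what produces the palindromic symmetry $\kh^u(K,h)\cong\kh^{-u}(K,h)$ that the paper actually uses to detect non-amphicheirality in Theorem \ref{thm:ten}.
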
 

\begin{proof} This follows immediately from Proposition \ref{prp:mirror}.\end{proof}

\begin{figure}[]
\raisebox{-20pt}{\includegraphics[scale=0.5]{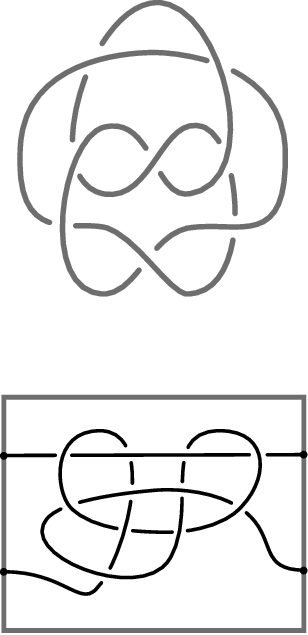}} 
\qquad
\begin{tikzpicture}[scale=0.85]
%\draw[step=.5,black,thin,xshift=0.5cm,yshift=0.5cm] (0.5,0.5) grid (16.5,3);

\draw[step=.5,black] (0,0) grid (5.5,.5);

%\node at (-0.75,0.25) {$\kh(K)$};

\node at (0.25,.25) {$1$}; 
\node at (0.75,.25) {$1$}; 
\node at (1.25,.25) {$1$}; 
\node at (1.75,.25) {$2$}; 
\node at (2.25,.25) {$2$}; 
\node at (2.75,.25) {$2$}; 
\node at (3.25,.25) {$2$}; 
\node at (3.75,.25) {$2$}; 
\node at (4.25,.25) {$1$}; 
\node at (4.75,.25) {$1$}; 
\node at (5.25,.25) {$1$}; 

\node at (0.25,-0.25) {\footnotesize{-$6$}}; 
\node at (0.75,-0.25) {\footnotesize{-$5$}}; 
\node at (1.25,-0.25) {\footnotesize{-$4$}}; 
\node at (1.75,-0.25) {\footnotesize{-$3$}}; 
\node at (2.25,-0.25) {\footnotesize{-$2$}}; 
\node at (2.75,-0.25) {\footnotesize{-$1$}}; 
\node at (3.25,-0.25) {\footnotesize{$0$}}; 
\node at (3.75,-0.25) {\footnotesize{$1$}}; 
\node at (4.25,-0.25) {\footnotesize{$2$}}; 
\node at (4.75,-0.25) {\footnotesize{$3$}}; 
\node at (5.25,-0.25) {\footnotesize{$4$}}; 

\end{tikzpicture}

\caption{The knot $8_{20}$ with quotient tangle associated with the unique strong inversion $h$ and $\bZ$-graded vector space $\kh(8_{20},h)$. Note that $\eta(K,h)=0$ \cite{Sakuma1986}.}\label{fig:8_20}\end{figure}

For example, this criterion detects the non-amphicheirality of $8_{20}$ while Sakuma's invariant does not. The calculation is summarized in Figure \ref{fig:8_20} (for unicity of $h$ we refer to Hartley \cite{Hartley1981}; see also Kodama and Sakuma \cite{KS1992}). However, it is well-known that the Jones polynomial of an amphicheiral knot is symmetric, and this gives a quick certification that $8_{20}$ is not amphicheiral. 

More generally, the Jones polynomial is typically very good at detecting non-amphicheirality. Given the relationship between the Jones polynomial and Khovanov homology, perhaps this should be explored further to ensure that we are not reinventing the wheel, in particular, that the information in $\kh(K,h)$ is not just a complicated repackaging of data from $\Khred(K)$. 

\begin{definition}
A knot $K$ is {J-amphicheiral} if the Jones polynomial of $K$ satisfies $V_K(t) = \sum_{i\ge0} a_i(t^i+t^{-i})$  for $a_i\in\bZ$.
\end{definition}

For example, the knot $9_{42}$ is J-amphicheiral. It is strongly invertible with a unique strong inversion, and both $\kh$ and $\eta$ may be used to establish non-amphicheirality. This knot also has non-zero signature giving an alternate (and much easier) means of confirming this fact and motivating a second definition.

\begin{definition}\label{def:Qamph}
A knot $K$ is {quasi-amphicheiral} if it is J-amphicheiral and has vanishing signature.\end{definition}
 
Amphicheiral knots are necessarily quasi-amphicheiral, however, quasi-amphicheiral knots that are non-amphicheiral seem to be quite rare. There are none with fewer than 9 crossings, for example; there are precisely three examples with 10 crossings: $10_{48},10_{71},10_{104}$. These are all thin knots, that is, the Khovanov homology $\Khred(K)$ is supported in a single diagonal for $K\in\{10_{48},10_{71},10_{104}\}$. Indeed, $\Khred(K)$ is determined by $V_K(t)$ and $\sigma(K)$ due to each of these knots being alternating. As a result, Khovanov homology does not detect the non-amphicheirality of these quasi-amphicheiral knots either, though in principle Khovanov homology should be more sensitive in this regard than the Jones polynomial (in fact, $9_{42}$ is an example supporting this presumption). Interestingly, $10_{71}$ and $10_{104}$ are distinct alternating knots that have identical invariants (Jones polynomial, signature and Khovanov homology).   

\begin{proposition}
Each $K\in\{10_{48},10_{71},10_{104}\}$ admits a unique strong inversion. 
\end{proposition}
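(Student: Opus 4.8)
The plan is to determine $\Sym(S^3,K)$ for each of the three knots and to read off the number of conjugacy classes of strong inversions, thereby reducing the statement to the structural result of Theorem \ref{thm:dihedral}. The essential observation is that each $K\in\{10_{48},10_{71},10_{104}\}$ is hyperbolic: each is a prime alternating knot that is not a $(2,n)$ torus knot, so hyperbolicity follows from the classification of alternating knots (Menasco), and may in any case be verified directly with SnapPy. Consequently Theorem \ref{thm:dihedral} applies, and $\Sym(S^3,K)$ is a subgroup of a dihedral group admitting $0$, $1$, or $2$ strong inversions up to conjugacy.

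First I would establish existence of a strong inversion by exhibiting one explicitly on each diagram, together with the associated quotient tangle $T_{K,h}$ as constructed in Section \ref{sec:strong}; these diagrams are needed in any event to compute $\kh(K,h)$. The exhibited involution is orientation preserving on $S^3$ and orientation reversing on $K$, hence a genuine strong inversion, so each $K$ admits at least one. In particular this also confirms that the order-two element generating the symmetry group (once uniqueness is in hand) is a strong inversion, rather than a free or cyclic involution or an orientation-preserving symmetry of $K$.

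For uniqueness I would invoke Theorem \ref{thm:dihedral} once more: $K$ admits a second strong inversion up to conjugacy if and only if $M_K$ carries a free or cyclic involution, so it suffices to show $\Sym(S^3,K)\cong\bZ/2\bZ$. The cleanest route is to appeal to the classification of symmetry groups of prime knots through ten crossings due to Kodama and Sakuma \cite{KS1992} (compare the treatment of $8_{20}$ above, and Hartley \cite{Hartley1981}); alternatively, by Mostow--Prasad rigidity one has $\Sym(S^3,K)\cong\operatorname{Isom}(M_K)$, and the latter is computed to be $\bZ/2\bZ$ directly from the hyperbolic structure. The hard part is precisely this uniqueness step: the absence of an extra free or cyclic involution is invisible at the level of diagrams and is exactly the subtle information carried by the full symmetry group, so the argument necessarily rests either on the hyperbolic geometry of $M_K$ (via rigidity and an isometry-group computation) or on the existing tabulations.
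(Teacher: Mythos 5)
Your argument matches the paper's proof essentially step for step: exhibit an explicit strong inversion on each diagram, note hyperbolicity, and rule out a second conjugacy class via Theorem \ref{thm:dihedral} by excluding free and cyclic involutions using the existing tabulations. The paper cites Hartley \cite{Hartley1981} for the absence of free periods and Kodama--Sakuma \cite[Table 3.1]{KS1992} for cyclic symmetries --- precisely the sources you identify --- so your proposal is correct and takes the same route (your rigidity/isometry-group computation is a valid alternative for the same step, but the paper does not use it).
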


\begin{proof}A strong inversion on each of the three knots is illustrated in Figure \ref{fig:fake}. Hartley proves that none of these knots admits a free period symmetry \cite{Hartley1981}. Cyclic symmetries are ruled out by Kodama and Sakuma, see in particular \cite[Table 3.1]{KS1992}. As each knot is hyperbolic $h$ must be unique; see Theorem \ref{thm:dihedral}. \end{proof}

As in previous examples, we will omit the unique strong inversion from the notation. This set of knots allows us to establish that $\kh(K)$ contains different information than $\{V_K(t), \sigma(K)\}$ and indeed $\Khred(K)$. By direct calculation we have:

\begin{figure}[]
\includegraphics[scale=0.5]{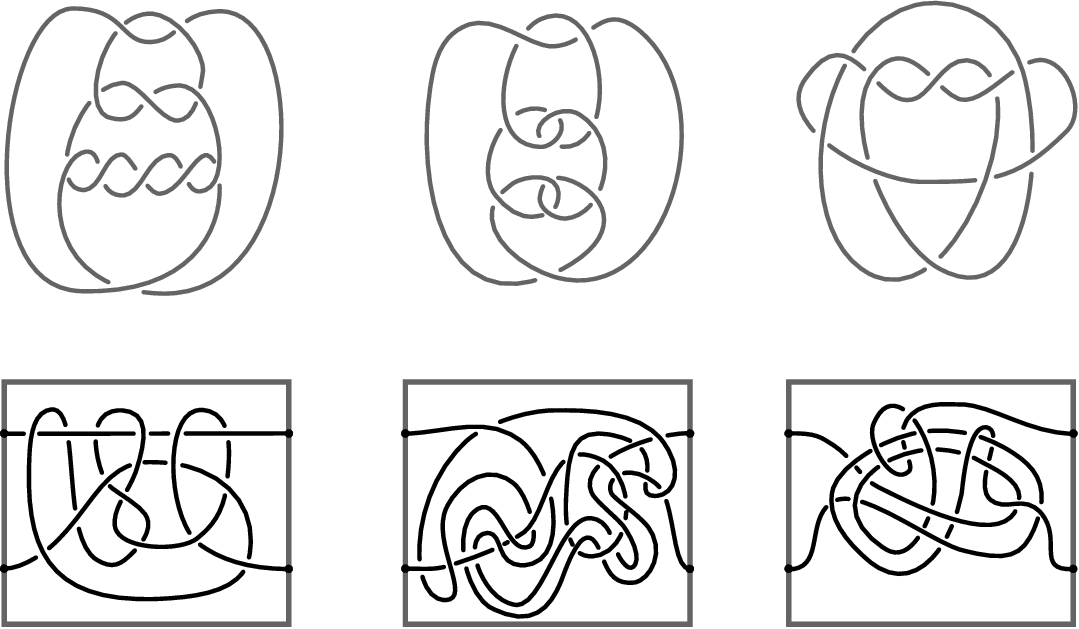} 
\caption{The quasi-amphicheiral knots $10_{48}$,  $10_{71}$ and $10_{104}$ (left-to-right, top) and their respective quotient tangles $T(10_{48})$,  $T(10_{71})$ and $T(10_{104})$ (left-to-right, bottom) corresponding to the unique strong inversion on each knot.}\label{fig:fake}\end{figure}

\begin{theorem}\label{thm:ten}
For \[K\in\{10_{48},10_{71},10_{104}\}\] the invariant $\kh(K)$ detects the non-amphicheirality of $K$. Moreover,   $\kh(10_{71})\ncong\kh(10_{104})$ distinguishing this pair, despite the fact that $\Khred(10_{71})\cong\Khred(10_{104})$.
\end{theorem}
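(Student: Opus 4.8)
The plan is to compute the graded vector space $\kh(K)$ explicitly for each $K\in\{10_{48},10_{71},10_{104}\}$ and then read off non-amphicheirality from the failure of an internal symmetry. Since each of these knots carries a unique strong inversion $h$, amphicheirality would give $(K,h)\simeq(K^*,h)=(K,h)^*$, whence Proposition \ref{prp:mirror} would force $\kh^u(K)\cong\kh^{-u}(K)$ for every $u$; that is, $\kh(K)$ would be symmetric under $u\mapsto -u$. It therefore suffices, for each knot, to exhibit a single grading $u$ with $\dim\kh^u(K)\neq\dim\kh^{-u}(K)$, which certifies $\kh(K)\ncong\kh(K^*)$ and hence $K\not\simeq K^*$. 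The closing assertion $\kh(10_{71})\ncong\kh(10_{104})$ is then nothing more than a side-by-side inspection of the two computed invariants, and is striking precisely because $\Khred$ cannot see any difference.

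To carry out each computation I would fix the quotient tangle $T=T_{K,h}$ recorded in Figure \ref{fig:fake} and mimic the torus-knot calculations of Section \ref{sec:examples}. Concretely, compute the reduced Khovanov homology $A_i=\Khred(T(i))$ of the twist family over a finite window of $i$, together with the induced maps $f_i\co A_{i+1}\to A_i$ of the inverse system. By Lemma \ref{lem:stability} each $A_i$ is the homology of an iterated mapping cone assembled from $\Khred(T(0))$ and copies of the one-dimensional space $X$ living in a single $\delta$-grading, so every generator that appears as $i$ grows (respectively, disappears as $i$ decreases) lies on one fixed diagonal. Corollary \ref{cor:stability} then guarantees $\dim A_{i+1}=\dim A_i+1$ once $i\gg 0$, and Corollary \ref{cor:inj/surj} that $f_i$ is surjective for $i\gg 0$ and injective for $i\ll 0$. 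The invariant $\kh(K)$ is extracted as the stable common graded subspace, i.e. the image of $\iota_j=\pi_j\circ\sigma$ for a graded section $\sigma$, which becomes visible once the window is wide enough on both tails.

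The substance of the argument is thus computational, and the main obstacle is \emph{certifying} that the chosen range of twists is large enough for the maps $f_i$ to have genuinely stabilized, so that what survives in $A_i$ is exactly $\kh(K)$ rather than an artefact of truncation. In the torus-knot setting this was pinned down by a lens-space (two-bridge, hence thin) closure, which forced the dimension count through $\det$; here no such distinguished thin closure is available, so instead one must observe the stabilization directly, verifying that the surviving generators settle into the single-$\delta$-grading pattern predicted by Lemma \ref{lem:stability} and that the increment-by-one behaviour of Corollary \ref{cor:stability} has set in at both ends of the window. Once $\kh(10_{48})$, $\kh(10_{71})$ and $\kh(10_{104})$ are in hand, the asymmetry under $u\mapsto -u$, and with it the non-amphicheirality, together with the separation of the pair $10_{71},10_{104}$, follow immediately by comparing dimensions grading-by-grading, exactly as displayed for $8_{20}$ in Figure \ref{fig:8_20}.
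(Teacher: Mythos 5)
Your proposal is correct and takes essentially the same route as the paper: the paper likewise invokes the uniqueness of the strong inversion on each of $10_{48}$, $10_{71}$, $10_{104}$, computes $\kh$ from the quotient tangles via the inverse system (Lemma \ref{lem:stability} and Corollaries \ref{cor:stability} and \ref{cor:inj/surj}), and concludes by observing that none of the three computed graded vector spaces exhibits the $u\mapsto -u$ symmetry forced by Propositions \ref{prp:mirror} and \ref{prp:amph}. The final claim $\kh(10_{71})\ncong\kh(10_{104})$ is likewise settled in the paper by grading-by-grading inspection of the tabulated invariants (both have total dimension $152$, so the graded structure is essential), exactly as you describe.
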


\begin{proof}The calculations that, together with Proposition \ref{prp:amph}, establish Theorem \ref{thm:ten} are summarized as follows:

\[\begin{tikzpicture}[scale=0.85]

\draw[step=.5,black] (0,0) grid (16,2.5);

\node at (-0.85,2.25) {$\kh(10_{48})$};

\node at (4.25,2.25) {$1$}; 
\node at (4.75,2.25) {$3$}; 
\node at (5.25,2.25) {$4$}; 
\node at (5.75,2.25) {$5$}; 
\node at (6.25,2.25) {$8$}; 
\node at (6.75,2.25) {$10$}; 
\node at (7.25,2.25) {$10$}; 
\node at (7.75,2.25) {$11$}; 
\node at (8.25,2.25) {$11$}; 
\node at (8.75,2.25) {$9$}; 
\node at (9.25,2.25) {$8$}; 
\node at (9.75,2.25) {$7$}; 
\node at (10.25,2.25) {$4$}; 
\node at (10.75,2.25) {$2$}; 
\node at (11.25,2.25) {$1$};
\node at (11.75,2.25) {$1$};
 
\node at (-0.85,1.25) {$\kh(10_{71})$};

\node at (0.25,1.25) {$1$}; 
\node at (0.75,1.25) {$2$}; 
\node at (1.25,1.25) {$2$}; 
\node at (1.75,1.25) {$3$}; 
\node at (2.25,1.25) {$6$}; 
\node at (2.75,1.25) {$8$}; 
\node at (3.25,1.25) {$7$}; 
\node at (3.75,1.25) {$10$}; 
\node at (4.25,1.25) {$12$}; 
\node at (4.75,1.25) {$12$}; 
\node at (5.25,1.25) {$12$}; 
\node at (5.75,1.25) {$13$}; 
\node at (6.25,1.25) {$12$}; 
\node at (6.75,1.25) {$10$}; 
\node at (7.25,1.25) {$11$}; 
\node at (7.75,1.25) {$9$}; 
\node at (8.25,1.25) {$6$}; 
\node at (8.75,1.25) {$5$}; 
\node at (9.25,1.25) {$5$}; 
\node at (9.75,1.25) {$3$}; 
\node at (10.25,1.25) {$1$}; 
\node at (10.75,1.25) {$1$}; 
\node at (11.25,1.25) {$1$};

\node at (-0.85,0.25) {$\kh(\! 10_{104}\!)$};

\node at (5.25,0.25) {$1$}; 
\node at (5.75,0.25) {$1$}; 
\node at (6.25,0.25) {$1$}; 
\node at (6.75,0.25) {$3$}; 
\node at (7.25,0.25) {$5$}; 
\node at (7.75,0.25) {$4$}; 
\node at (8.25,0.25) {$7$}; 
\node at (8.75,0.25) {$10$}; 
\node at (9.25,0.25) {$10$}; 
\node at (9.75,0.25) {$11$}; 
\node at (10.25,0.25) {$14$}; 
\node at (10.75,0.25) {$14$}; 
\node at (11.25,0.25) {$12$};
\node at (11.75,0.25) {$14$};

\node at (12.25,0.25) {$12$};
\node at (12.75,0.25) {$9$};
\node at (13.25,0.25) {$8$};
\node at (13.75,0.25) {$7$};
\node at (14.25,0.25) {$4$};
\node at (14.75,0.25) {$2$};
\node at (15.25,0.25) {$2$};
\node at (15.75,0.25) {$1$};

\node at (0.25,-0.25) {\footnotesize{-$14$}}; 
\node at (1.25,-0.25) {\footnotesize{-$12$}}; 
\node at (2.25,-0.25) {\footnotesize{-$10$}}; 
\node at (3.25,-0.25) {\footnotesize{-$8$}}; 
\node at (4.25,-0.25) {\footnotesize{-$6$}}; 
\node at (5.25,-0.25) {\footnotesize{-$4$}}; 
\node at (6.25,-0.25) {\footnotesize{-$2$}}; 

\node at (7.25,-0.25) {\footnotesize{$0$}}; 
\node at (8.25,-0.25) {\footnotesize{$2$}}; 
\node at (9.25,-0.25) {\footnotesize{$4$}}; 
\node at (10.25,-0.25) {\footnotesize{$6$}}; 
\node at (11.25,-0.25) {\footnotesize{$8$}}; 
\node at (12.25,-0.25) {\footnotesize{$10$}}; 
\node at (13.25,-0.25) {\footnotesize{$12$}}; 
\node at (14.25,-0.25) {\footnotesize{$14$}}; 
\node at (15.25,-0.25) {\footnotesize{$16$}}; 

\draw [white, fill=white] (-0.015,0.515) rectangle (16.015,0.985);
\draw [white, fill=white] (-0.015,1.515) rectangle (16.015,1.985);

\end{tikzpicture}\]

 None of these vector spaces exhibit the requisite symmetry for amphicheirality. \end{proof}

Note that Theorem \ref{thm:summary} is an immediate corollary of Theorem \ref{thm:ten}. We emphasise that $\kh(10_{71})\ncong\kh(10_{104})$ as graded vector spaces. Indeed, $\dim\kh(10_{71})=\dim\kh(10_{104})=152$; compare Remark \ref{rmk:closing}. 

\begin{remark} Sakuma computes $\eta(10_{104},h)=-t^{-3}+t^{-2}-t^{-1}+2-t+t^2-t^3$ \cite[Example 3.5]{Sakuma1986}, the non-vanishing of which provides another means of verifying the non-amphicheirality of $10_{104}$. \end{remark}

\section{Conjectures}\label{sec:conjectures}

\subsection{Structural observations}

Consider the graded vector space \[V=\bF^{(0,\delta)}\oplus\bF^{(2,\delta)}\oplus\bF^{(3,\delta)}\oplus\bF^{(5,\delta)}\] for some $\delta\in\bZ$, where the second grading should be regarded as a relative $\bZ$-grading (compare the form of $\kh(K,h)$ in Section \ref{sec:examples} when $K$ is the trefoil). 

\begin{conjecture}\label{con:structure}For any strongly invertible knot $(K,h)$ there is a decomposition \[\kh(K,h)\cong\bigoplus_{i=1}^kV[m_i,n_i]\] as a $(\bZ\times\bZ)$-graded group (where the secondary grading is a relative grading) for pairs $(m_i,n_i)\in\bZ\times\bZ$. In particular, \[\dim\kh(K,h)\equiv 0\bmod{4}.\]\end{conjecture}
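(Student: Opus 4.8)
The plan is to obtain the congruence $\dim\kh(K,h)\equiv 0\bmod 4$ as a formal corollary of the block decomposition, so the genuine target is the splitting into copies of $V$. The first thing I would record is the factorisation
\[
1+t^2+t^3+t^5=(1+t^2)(1+t^3),
\]
which shows that the conjectured decomposition is equivalent to the statement that, within each fixed relative $\delta$-diagonal, the $u$-graded Poincar\'e polynomial of $\kh(K,h)$ is a non-negative integral combination of shifts of $(1+t^2)(1+t^3)$. This reformulation points directly to seeking two commuting degree-raising operators on $\kh(K,h)$, of net bidegrees $(2,0)$ and $(3,0)$, each squaring to zero, under which $\kh(K,h)$ becomes a \emph{free} module over $\bF[X,Y]/(X^2,Y^2)$; the four monomials $1,X,Y,XY$ then account for the pattern $0,2,3,5$ and force the block structure, with the divisibility of the dimension by $4$ an automatic consequence.

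The main route I would pursue is therefore to construct such operators intrinsically. Candidate degree-shifting maps are already present in the set-up: the connecting homomorphism $\partial$ of bidegree $(1,1)$, composed appropriately with the twisting maps $f_i$ so as to achieve net $\delta$-degree zero, furnishes natural relatively graded endomorphisms of the stable object, and the two arcs of the branch set $\tau$ each carry a basepoint action on $\Khred$. I would try to assemble these into the required $X$ and $Y$, verify the relations $X^2=Y^2=0$ and $XY=YX$ on $\kh(K,h)$ by reducing to the iterated mapping cone of Lemma \ref{lem:stability}, and then establish freeness by exhibiting enough injectivity (for instance that $X$ and $Y$ are injective away from the top of each $\delta$-diagonal). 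The trefoil computation of Section \ref{sec:examples} supplies both the base case and a check on the degrees.

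A second, more geometric route passes through the branched double cover. For a suitable representative one has $\Br_{T(n)}=S^3_n(K)$, and for $n\gg 0$ the large-surgery description of $\HFhat(S^3_n(K))$ decomposes over $\spinc$ structures into standard summands; the $V$-block should be the shadow in $\Khred(T(n))$ of one such summand, persisting through the Ozsv\'ath--Szab\'o spectral sequence $\Khred(T(n))\Rightarrow\HFhat(-S^3_n(K))$. The behaviour under mirrors already constrains this picture: Proposition \ref{prp:mirror} forces $V$ to be self-mirror up to the shift $V^*\cong V[-5]$, which is exactly what pins down the symmetric $0,2,3,5$ pattern. For the weaker mod-$4$ statement on its own, one might instead hope to combine the factor of $2$ coming from the $\bF$-coefficient reduced-versus-unreduced relationship with a second factor of $2$ produced by the action of the strong inversion $h$ itself.

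The hardest part, and the reason this remains conjectural, is the freeness. Even granting operators $X,Y$ of the correct degrees, nothing so far excludes a defective summand --- a broken block whose dimension is not divisible by $4$ --- and ruling these out appears to demand genuinely new input: either an exactness statement for the twisting maps finer than Corollary \ref{cor:inj/surj}, or a transfer of the rigid $\spinc$-graded structure of $\HFhat$ backwards across the spectral sequence, which is only a filtered inequality rather than an isomorphism. Controlling this potential degeneration is where I expect the real difficulty to lie.
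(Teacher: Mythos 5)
You are attempting to prove a statement that the paper itself does not prove: this is Conjecture \ref{con:structure}, and the author says explicitly that it ``is based only on empirical evidence from a range of calculations'' and that he has ``no explanation whatsoever for this surprisingly ordered behaviour.'' So there is no argument in the paper to compare yours against, and your proposal --- as you concede in your final paragraph --- is a research program rather than a proof: the operators $X$ and $Y$ are never constructed, the freeness of the conjectured $\bF[X,Y]/(X^2,Y^2)$-module structure is left open, and the mod-$4$ congruence is therefore not derived. Your reduction of the block decomposition to freeness over $\bF[X,Y]/(X^2,Y^2)$, via the factorisation $1+t^2+t^3+t^5=(1+t^2)(1+t^3)$, is a reasonable reformulation, and it matches the shape of explanation the paper itself gestures at by citing Lee's resolution of Bar-Natan's pattern and the Dunfield--Gukov--Rasmussen pairing; but that is an analogy, not an argument.

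Beyond the admitted gap, several of your concrete ingredients would fail. First, the maps $f_i$ and the connecting homomorphism $\partial$ are consecutive arrows in a long exact sequence, so any composite returning to the same group vanishes by exactness; worse, for the tangles $T_{K,h}$ every connecting map factors through a shift of $\Khred(T(\frac{1}{0}))\cong\bF$, so any endomorphism built through $\partial$ has rank at most one, while a free $\bF[X,Y]/(X^2,Y^2)$-module of dimension $4k$ needs $X$ of rank $2k$ --- already impossible for $9_9$ or the $10$-crossing examples, where $\dim\kh$ is large. Second, the degree bookkeeping is against you: $\partial$ has bidegree $(1,1)$ and the basepoint actions preserve $u$, so it is not clear any composite of the available maps realises net bidegree $(2,0)$ or $(3,0)$. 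Third, the mirror argument is misapplied: Proposition \ref{prp:mirror} relates $\kh(K,h)$ to the invariant of the \emph{mirror} $(K,h)^*$, in general a different strongly invertible knot, so it imposes no internal symmetry on $\kh(K,h)$ for fixed $(K,h)$; and even as a consistency check, self-reciprocity does not single out $1+t^2+t^3+t^5$ (the polynomial $(1+t)(1+t^4)$ is equally symmetric). Finally, both halves of your shortcut to the mod-$4$ statement collapse: $\kh$ is built from \emph{reduced} Khovanov homology throughout, so the reduced/unreduced doubling never enters, and there is no action of $h$ on $\kh(K,h)$ in this framework --- the invariant is computed from the quotient tangle, where $h$ has already been divided out. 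The spectral-sequence route fares no better, as you note, since $\Khred(T(n))\Rightarrow\HFhat(-\Br_{T(n)})$ only bounds ranks and transports no $\spinc$-graded module structure back to the $E_2$ page. In short: the statement remains open, and your outline, while pointed in a plausible direction, does not close any of the gaps that make it a conjecture.
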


Note that a consequence of this conjecture is a Khovanov-theoretic alternative to the last step in the proof of Theorem \ref{thm:cable}. Namely, if $K$ is non-trivial, then so is $D(K)$ so that $\dim\kh(D(K))$ is purportedly at least 4 (combining Theorem \ref{thm:unknot} and Conjecture \ref{con:structure}) hence $\dim\Khred(\sC_2K)$ is at least 4 as well.

This conjecture is based only on empirical evidence from a range of calculations. While we have no explanation whatsoever for this surprisingly ordered behaviour, there is some precedent for this in the literature. For example  Lee's work \cite{Lee2005} (see also Rasmussen \cite{Rasmussen2010}) explained an observation of Bar-Natan \cite[Conjecture 1]{Bar-Natan2002}.  There is also a related conjecture that remains open due to Dunfield, Gukov and Rasmussen and an observed a three-step pairing \cite[Section 5.6, particularly Definition 5.5]{DGR2006}. Even if our conjecture proves to be incorrect there should be some  explanation for the observed behaviour on a wide range of examples. 

Also observed in examples (see Section \ref{sec:dist}) is the following.

\begin{conjecture}\label{con:rank} If $h_1$ and $h_2$ are strong inversions on a knot $K$ then $\dim\kh(K,h_1)=\dim\kh(K,h_2)$.\end{conjecture}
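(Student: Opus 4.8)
The plan is to first replace the inverse-limit definition of $\kh(K,h)$ by a finite, computable quantity, and then to argue that this quantity is intrinsic to the knot exterior $M_K$ rather than to the choice of strong inversion. For the first step, recall from Corollary \ref{cor:inj/surj} that the structure maps of the inverse system $\{A_i=\Khred(T_{K,h}(i)),f_i\}$ are surjective for $i\gg0$ and injective for $i\ll0$. Combined with finite-dimensionality and the ``largest common subspace'' description of $\kh$ following Proposition \ref{prp:subspace}, this identifies $\kh(K,h)$ with the image of the composite $f_M\circ\cdots\circ f_{N-1}\co\Khred(T(N))\to\Khred(T(M))$ for any $N\gg0\gg M$, so that
\[ \dim\kh(K,h)=\rk\big(f_M\circ\cdots\circ f_{N-1}\big) \]
is a stable composite rank. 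The conjecture then becomes the assertion that this stable rank is independent of the chosen presentation of $M_K$ as a two-fold branched cover.

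The conceptual heart of the approach is the observation underlying Proposition \ref{prp:tangle}: for suitable representatives $\Br_{T_{K,h}(n)}=S^3_n(K)$ for every $n$, and this family of three-manifolds depends only on $K$, not on $h$. Moreover the three terms $\Khred(T(n+1))$, $\Khred(T(n))$, $\Khred(T(\tfrac{1}{0}))$ of the long exact sequence defining $f_n$ have two-fold branched covers $S^3_{n+1}(K)$, $S^3_n(K)$ and $S^3$ respectively, matching precisely the Heegaard Floer surgery exact triangle for $K$. The plan is therefore to run the Ozsv\'ath--Szab\'o spectral sequence $\Khred(L)\Rightarrow\HFhat(-\Br_L)$ \cite{OSz2005-branch} uniformly across the inverse system, obtaining a Floer inverse system $\{\HFhat(S^3_n(K)),\hat f_n\}$ assembled from the integer surgeries on the single manifold $M_K$ (equivalently, from its bordered invariant and the mapping-cone surgery formula). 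Since that system is manifestly $h$-independent, its stable composite rank is an invariant of $K$ alone, and one then wants to conclude that the Khovanov stable rank agrees with it in the stable range.

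The hard part is exactly this last matching. The branched-cover spectral sequence is known only as an inequality $\dim\HFhat(-\Br_L)\le\dim\Khred(L)$, and its naturality with respect to the crossing-resolution maps $f_n$ is not available; hence neither the degeneration of the higher differentials in the stable range nor the identification of the induced map on $E_\infty$ with $\hat f_n$ is currently justified. The natural technology for closing this gap would be a bordered Floer model of the tangle invariant $\KHT$ in the spirit of Auroux--Grigsby--Wehrli \cite{AGW2011}, promoting the term-by-term spectral sequence to a morphism of inverse systems; this is essentially the content of the relationship with Heegaard Floer homology conjectured elsewhere in this section. When $K$ is hyperbolic the auxiliary free or cyclic involution $\rho=h_1h_2$ furnished by Theorem \ref{thm:dihedral} is a candidate tool for supplying this naturality directly. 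As a more hands-on alternative, one could instead try to establish a closed formula $\dim\kh(K,h)=F(K)$ for an explicit invariant $F$ extracted from the knot Floer homology of $K$ (or from $\HFhat$ of large surgeries) and consistent with the computations of Section \ref{sec:examples}; any such formula would make the equality $\dim\kh(K,h_1)=\dim\kh(K,h_2)$ immediate.
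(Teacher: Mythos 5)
The statement you are asked to prove is Conjecture \ref{con:rank}: the paper offers no proof of it at all, only empirical support from the computations in Section \ref{sec:dist} (the pairs of strong inversions on $9_9$ and $10_{155}$, where the total dimensions are observed to agree). So there is no argument in the paper to compare against, and your submission --- as you yourself say in the final paragraph --- is a research program rather than a proof. One ingredient of it is correct and worth keeping: using Corollary \ref{cor:inj/surj}, finite-dimensionality, and the Mittag-Leffler property invoked in Proposition \ref{prp:subspace}, one does get $\boldK=\ker\pi_M$ for $M\ll0$ (injectivity of the $f_j$ in the stable range propagates a vanishing tail upward), and hence
\[\dim\kh(K,h)=\rk\bigl(f_M\circ\cdots\circ f_{N-1}\bigr)\qquad\text{for }N\gg0\gg M,\]
a stable composite rank. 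That reformulation is sound and makes the conjecture concrete.

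The gap is the entire second half. The Ozsv\'ath--Szab\'o spectral sequence \cite{OSz2005-branch} is invoked term by term, but no naturality of that spectral sequence with respect to the resolution maps $f_n$ is available, and you correctly flag this; what you do not flag is that even granting naturality the argument would still fail to close. The spectral sequence generally does not degenerate --- $\dim\HFhat(-\Br_{T(n)})\le\dim\Khred(T(n))$ can be strict --- and a morphism of spectral sequences only relates ranks on $E_\infty$ pages by inequalities, so the Khovanov stable rank can strictly exceed any quantity computed from the Floer inverse system; the potential $h$-dependence lives exactly in the higher differentials your scheme discards. Indeed the paper itself points out (citing \cite{Watson2010}, in the paragraph following Conjecture \ref{con:rank}) that $\Br_{L_1}\cong\Br_{L_2}$ does \emph{not} imply $\dim\Khred(L_1)=\dim\Khred(L_2)$, so the ``manifestly $h$-independent'' middle step of your plan is known to be insufficient at each finite stage $n$: the content of the conjecture is precisely that this failure cancels in the stable limit, and nothing in your outline explains why it should. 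The fallback suggestions (a bordered model for $\KHT$, the involution $h_1h_2$, or a closed formula $\dim\kh(K,h)=F(K)$) are reasonable directions but are stated without any supporting construction, so the conjecture remains open under your proposal.
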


This again places emphasis on the graded structure of the invariant $\kh(K,h)$ (compare Question \ref{qst:seperate} and Remark \ref{rmk:fig8}).

Note that it is not the case that $\dim\Khred(L_1)=\dim\Khred(L_2)$ when $\Br_{L_1}\cong\Br_{L_2}$ \cite{Watson2010}, however this equality does hold on a surprising range of examples of three-manifolds that two-fold branch cover distinct links. Conjecture \ref{con:rank} would explain such an equality in the case where the three-manifold arises by Dehn surgery on a knot $K$ admitting a pair of strong inversions $h_1$ and $h_2$. In particular, for surgery coefficient $n$ we have branch sets $L_1=T^\circ_{K,h_1}(n)$ and $L_2=T^\circ_{K,h_2}(n)$ for the two-fold branched cover $S^3_n(K)$. 

\subsection{A Khovanov-theoretic characterisation of L-space knots} Recall that an L-space is a rational homology sphere $Y$ satisfying $\dim\HFhat(Y)=|H_1(Y;\bZ)|$, and a knot in $S^3$ admitting an L-space surgery is called an L-space knot \cite{OSz2005-lens}. This class of three-manifolds include lens spaces, for example. It is an interesting open problem to give a topological characterisation of L-spaces, and related to this is the problem of characterising L-space knots. In the presence of a strong inversion, we propose:

\begin{conjecture}\label{con:L-space}
A non-trivial knot $K$ admitting a strong inversion $h$ is an L-space knot if and only if $\kh(K,h)$ is supported in a single diagonal grading $\delta=u-q$. 
\end{conjecture}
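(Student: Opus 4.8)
The plan is to prove the two implications separately, in each case reducing the question to a statement about the family $\Khred(T(n))$ by means of the stabilization result (Lemma~\ref{lem:stability}) and then invoking the Ozsv\'ath--Szab\'o spectral sequence converging to $\HFhat(-\Br_{T(n)})$, exactly in the spirit of the converse of Theorem~\ref{thm:unknot} and of Theorem~\ref{thm:torus}. Throughout I fix the preferred representative $T=T_{K,h}$ with $S^3_n(K)\cong\Br_{T(n)}$, so that $|H_1(S^3_n(K);\bZ)|=\det(T(n))=|\chi_\delta\Khred(T(n))|$, and (using Proposition~\ref{prp:mirror}, under which both the L-space and the thinness conditions are mirror-symmetric) I may arrange that large positive surgeries are the relevant ones.

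For the implication that thinness of $\kh(K,h)$ forces $K$ to be an L-space knot, I would argue as follows. By Lemma~\ref{lem:stability}, for $n\gg0$ one has $\Khred(T(n))\cong\kh(K,h)\oplus\bF^m$ with the summand $\bF^m$ concentrated in a single relative $\delta$-grading. If $\kh(K,h)$ is supported in a single diagonal \emph{and that diagonal is the one carrying the stabilizing summand}, then $\Khred(T(n))$ is thin, whence $\dim\Khred(T(n))=|\chi_\delta\Khred(T(n))|=\det(T(n))=|H_1(S^3_n(K);\bZ)|=n$. The spectral-sequence squeeze $n=|H_1(S^3_n(K);\bZ)|\le\dim\HFhat(S^3_n(K))\le\dim\Khred(T(n))=n$ then exhibits $S^3_n(K)$ as an L-space for all $n\gg0$, so $K$ is an L-space knot. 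The one delicate point is the parenthetical hypothesis: I would verify directly from the explicit bidegree of $X$ in Lemma~\ref{lem:stability} that a single-diagonal $\kh(K,h)$ must share the stabilizing diagonal, ruling out the possibility that $\Khred(T(n))$ spreads across two diagonals (which would break the dimension count).

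For the converse, suppose $K$ is a non-trivial L-space knot. Then $S^3_n(K)$ is an L-space for all sufficiently large $n$, so $\dim\HFhat(S^3_n(K))=|H_1(S^3_n(K);\bZ)|=n$. Granting a strong inversion $h$ and the representative $T$ above, the spectral sequence yields only the \emph{lower} bound $\dim\Khred(T(n))\ge\dim\HFhat(S^3_n(K))=n$. To finish I must upgrade this to equality $\dim\Khred(T(n))=n$, which together with $\det(T(n))=n$ forces $\Khred(T(n))$ to be thin; transporting thinness back through Lemma~\ref{lem:stability} for large $n$ then gives thinness of the common summand $\kh(K,h)$. The needed equality is equivalent to degeneration of the spectral sequence at the $E_2$-page for the branch sets $T(n)$.

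The hard part, and the reason this is stated only as a conjecture, is exactly this converse, where there are two genuine gaps. First, it is not known that every L-space knot admits a strong inversion, so the forward direction silently carries a symmetry assertion that would itself require proof (or the statement must be read as restricted to strongly invertible L-space knots). Second, and more seriously, the Ozsv\'ath--Szab\'o spectral sequence only bounds $\dim\Khred$ from below by $\dim\HFhat$, so an L-space branched double cover does \emph{not} formally force thin Khovanov homology: one must independently rule out the higher differentials. Establishing this collapse uniformly across the family $\{T(n)\}$ would require new input tying the $E_\infty$-page directly to the $\delta$-grading, and this is precisely the sort of structural Khovanov--Heegaard-Floer correspondence flagged among the companion conjectures.
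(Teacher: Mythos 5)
The statement you were given is Conjecture~\ref{con:L-space}: the paper contains no proof of it, only supporting evidence (lens-space surgeries on torus knots via Theorem~\ref{thm:torus}, the $(-2,3,q)$-pretzels and sufficiently positive cables via \cite{Watson2011}), so there is no ``paper proof'' to match your argument against. To your credit, you do not claim a proof either, and your diagnosis of the converse direction reproduces exactly the two obstructions the paper itself flags: the conjecture silently asserts that every L-space knot is strongly invertible (the paper notes this generalises the contention implicit in the Berge conjecture), and the Ozsv\'ath--Szab\'o spectral sequence only gives $\dim\HFhat(S^3_n(K))\le\dim\Khred(T(n))$, a lower bound on Khovanov homology that cannot force thinness of the branch sets. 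Your forward-direction mechanism --- thin $\Khred(T(n))$ for $n\gg0$, then the squeeze $n=|H_1(S^3_n(K);\bZ)|=\det(T(n))\le\dim\HFhat(S^3_n(K))\le\dim\Khred(T(n))=n$ --- is indeed the engine behind the paper's evidence, which consists of strongly invertible knots admitting a large surgery with thin branch set.

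However, the one step you assert you ``would verify directly from the explicit bidegree of $X$'' --- that a single-diagonal $\kh(K,h)$ must share the stabilizing diagonal --- is itself a genuine gap, not a routine check, and I believe it is a principal reason the forward implication is also only conjectural. The bidegree of $X$ in Lemma~\ref{lem:stability} places the stable generators on a fixed diagonal but constrains nothing about where $\kh(K,h)$ sits relative to it: the paper's own figure-eight computation in Section~\ref{sec:bi} normalises the $X$-generators to $2\delta=+1$ and finds $\kh(4_1,h)$ supported entirely at $2\delta\in\{-3,-1\}$, off the stabilizing diagonal. Nor can a counting argument close the gap: if $\kh(K,h)$ were concentrated in a single diagonal whose contribution to $\chi_\delta$ has sign opposite to that of the stable summand $\bF^m$, then $\det(T(n))=n$ forces $m=n+\dim\kh(K,h)$, so $\dim\Khred(T(n))=n+2\dim\kh(K,h)>n$ and your squeeze no longer concludes that $S^3_n(K)$ is an L-space --- it simply yields no information. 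So as written, \emph{both} directions of your argument rest on unproved collapse/coincidence statements tying the $\delta$-grading to the $E_\infty$-page and to the stable diagonal; this is consistent with the author stating the full biconditional as a conjecture rather than proving either implication.
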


Support for this conjecture may be found in \cite{Watson2011}: Any knot admitting a lens space surgery (compare Theorem \ref{thm:torus}) as well as the $(-2,3,q)$-pretzel knots satisfy the conjecture. It is also the case that given a knot satisfying the conjecture, all sufficiently positive cables of the knot will also satisfy the conjecture (see \cite[Theorem 6.1]{Watson2011}).  This follows from the observation that all of these examples are strongly invertible and admit a large surgery with a thin branch set (that is, the branch set has Khovanov homology supported in a single $\delta$-grading). 

We remark that it is implicit in the Berge conjecture that knots admitting a non-trivial lens space surgery must be strongly invertible. It is tempting to guess --- and indeed the original version of Conjecture \ref{con:L-space} did so! --- that this is a property of L-space knots in general, namely, that {\em L-space knots are strongly invertible}.  However  recent work of Baker and Luecke shows that this is not the case \cite{BakerLuecke}. Interestingly, their construction produces knots in $S^3$ with no symmetries at all but which admit surgeries that are two-fold branched covers of alternating knots. In particular, the surgery admits an involution and the associated branch set has thin Khovanov homology. 

\section{Afterward: An absolute bi-grading}\label{sec:bi}

In calculating $\kh(K,h)$ we have made essential use of the secondary grading $\delta$ on $\Khred(T_{K,h}(n))$. This is {\em a priori} a relative $\bZ$-grading so that $\kh(K,h)$ is naturally a $(\bZ\times\bZ)$-graded vector space (the second factor being the relative grading). It seems reasonable to attempt to promote (or, lift) this to an absolute bi-grading. To conclude, we will sketch a construction of such a lift. 

Let $T=T_{K,h}$ be the tangle associated with a given strongly invertible knot $(K,h)$. Notice that from Lemma \ref{lem:stability} we can take a sufficiently large $n$ so that $\Khred(T(n))$ is computed (by way of an iterated mapping cone) in terms of $\Khred(T(0))$ and $\bigoplus_{i=0}^nX[i,0]\cong \bigoplus_{i=0}^n\bF^{(u(i),\delta(i))}$ for some integer $u(i)$ and half-integer $\delta(i)$. 

Inspecting the proof of Lemma \ref{lem:stability} we see that $\delta(i)$ depends on the integer $n$, however this dependance disappears when $\delta$ is taken as a relative $\bZ$-grading instead of an absolute $\frac{1}{2}\bZ$-grading. In particular, we may fix a choice of absolute $\delta$ grading on $\kh(K,h)$ by requiring that the potential generators from $\bigoplus_{i=0}^nX[i,0]$ lie in $\delta = +1$. 

In the interest of preserving the symmetry under mirrors that was essential in application (see Section \ref{sec:amph}) it is more natural to fix $\delta=+\frac{1}{2}$ instead. It is only a cosmetic difference to fix $2\delta=+1$ to obtain an absolutely $(\bZ\times\odd)$-graded vector space (effectively clearing denominators in an {\em a priori} $(\bZ\times\half \bZ)$-graded vector space). As a result, for example, the trefoil (considered in our running example) is promoted to 

\[
\includegraphics[scale=0.5]{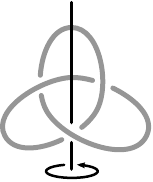}\qquad
\begin{tikzpicture}[scale=0.85]
%\draw[step=.5,black,thin,xshift=0.5cm,yshift=0.5cm] (0.5,0.5) grid (16.5,3);

\draw[step=.5,black] (0,0) grid (3,.5);

\node at (-0.375,0.25) {\footnotesize{$1$}};

\node at (0.25,.25) {$1$}; 

\node at (1.25,.25) {$1$}; 
\node at (1.75,.25) {$1$}; 

\node at (2.75,.25) {$1$}; 

\node at (0.25,-0.25) {\footnotesize{-$5$}}; 
\node at (0.75,-0.25) {\footnotesize{-$4$}}; 
\node at (1.25,-0.25) {\footnotesize{-$3$}}; 
\node at (1.75,-0.25) {\footnotesize{-$2$}}; 
\node at (2.25,-0.25) {\footnotesize{-$1$}}; 
\node at (2.75,-0.25) {\footnotesize{$0$}}; 

\end{tikzpicture}\]

as a $(\bZ\times\odd)$-graded vector space (the vertical axis represents $2\delta$, as in Figure \ref{fig:gradings}). Since the invariant for any torus knot is supported in a single $\delta$-grading (indeed, $2\delta=+1$ according to this absolute lift for positive torus knots; compare Theorem \ref{thm:torus}), this does not add too much new information. However, in general this does add considerably more structure. For example, the figure eight gives

\[
\includegraphics[scale=0.5]{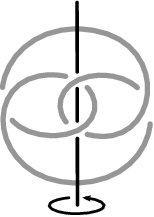}\qquad
\begin{tikzpicture}[scale=0.85]
%\draw[step=.5,black,thin,xshift=0.5cm,yshift=0.5cm] (0.5,0.5) grid (16.5,3);

\draw[step=.5,black] (0,0) grid (5,1);

\node at (-0.375,0.25) {\footnotesize{-$1$}};
\node at (-0.375,0.75) {\footnotesize{-$3$}};

\node at (0.25,.75) {$1$}; 

\node at (1.25,.75) {$1$}; 
\node at (1.75,.75) {$1$}; 

\node at (2.75,.75) {$1$}; 

\node at (2.25,.25) {$1$}; 

\node at (3.25,.25) {$1$}; 
\node at (3.75,.25) {$1$}; 

\node at (4.75,.25) {$1$}; 

\node at (0.25,-0.25) {\footnotesize{$0$}}; 
\node at (0.75,-0.25) {\footnotesize{$1$}}; 
\node at (1.25,-0.25) {\footnotesize{$2$}}; 
\node at (1.75,-0.25) {\footnotesize{$3$}}; 
\node at (2.25,-0.25) {\footnotesize{$4$}}; 
\node at (2.75,-0.25) {\footnotesize{$5$}}; 
\node at (3.25,-0.25) {\footnotesize{$6$}}; 
\node at (3.75,-0.25) {\footnotesize{$7$}}; 
\node at (4.25,-0.25) {\footnotesize{$8$}};
 \node at (4.75,-0.25) {\footnotesize{$9$}};

\end{tikzpicture}\]

(this may be extracted from the calculations in \cite{Watson2013}). 

\begin{remark}Notice that the figure eight admits a second strong inversion and,  since this knot is amphicheiral and hyperbolic, the pair of strong inversions $h_1$ and $h_2$ must be interchanged under mirror image \cite[Proposition 3.4 (2)]{Sakuma1986}. That is, $(K,h_1)^*\simeq(K,h_2)$ as strongly invertible knots where $K$ is the figure eight. In particular, this example illustrates the necessity for the $\bZ$-graded information in distinguishing strong inversion: $\kh(K,h_1)\cong\kh(K,h_2)$ even as relatively $(\bZ\times\bZ)$-graded groups (compare Remark \ref{rmk:fig8}).\end{remark}

A key feature of this choice --- building on Proposition \ref{prp:mirror} --- is summarized in the following statement, the proof of which is left to the reader.

\begin{proposition}
Let $(K,h)$ be a strongly invertible knot with strongly invertible mirror $(K,h)^*$. Then $\kh^{u,2\delta}(K,h)^*\cong \kh^{-u,-2\delta}(K,h)$ as $(\bZ\times\odd)$-graded vector spaces.  $\hfill\ensuremath\Box$
\end{proposition}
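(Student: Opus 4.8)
The plan is to promote the proof of Proposition \ref{prp:mirror} to the bi-graded setting by carrying the diagonal grading along at every step. The only external input beyond that proof is the behaviour of the diagonal grading under mirrors: the stated symmetry $\Khred^u_q(L^*)\cong\Khred^{-u}_{-q}(L)$ sends $\delta=u-q$ to $-u-(-q)=-\delta$, so that $\Khred^{u,\delta}(L^*)\cong\Khred^{-u,-\delta}(L)$. This is the bi-graded upgrade of the input used in Proposition \ref{prp:mirror}, and it is the engine of the whole argument.

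First I would fix a section $\sigma\co\kh(K,h)\into\KHT(T_{K,h})$ that is graded for both $u$ and the absolute $\delta$ from Section \ref{sec:bi}, together with the resulting bi-graded inclusions $\iota_j=\pi_j\circ\sigma$; and likewise for the mirror, using $T^*_{K,h}=T_{(K,h)^*}$. Applying the mirror symmetry above to $L=T_{K,h}(j)$ and using $(T_{K,h}(j))^*\simeq T^*_{K,h}(-j)$, I would form the same commutative square as in the proof of Proposition \ref{prp:mirror}, now with vertical isomorphism $\Khred^{u,\delta}(T^*_{K,h}(-j))\cong\Khred^{-u,-\delta}(T_{K,h}(j))$. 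Composing $\iota_{-j}$ on the mirror side with this isomorphism and comparing against $\iota_j$ on the original side produces bi-graded inclusions $\kh^{u,\delta}(K,h)^*\subseteq\kh^{-u,-\delta}(K,h)$ and, by the symmetric argument, the reverse, hence an isomorphism of relatively bi-graded vector spaces. Clearing denominators ($2\delta$ in place of $\delta$) turns $\delta\mapsto-\delta$ into $2\delta\mapsto-2\delta$ and gives the stated form.

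The main obstacle is passing from the relative to the absolute $\delta$-grading, since the latter was pinned down in Section \ref{sec:bi} by a normalization made separately for $(K,h)$ and for $(K,h)^*$. Concretely, the absolute grading is fixed by declaring that the stabilizing generators from $\bigoplus_{i=0}^n X[i,0]$ in Lemma \ref{lem:stability} lie in $2\delta=+1$. I would check that this choice is natural under mirroring: the stabilizing summand for $T^*_{K,h}$ is obtained from that for $T_{K,h}$ under the mirror symmetry, so that the normalization $2\delta=+1$ on one side is carried by $\delta\mapsto-\delta$ precisely to the normalization on the other, with no residual shift. This is exactly why the symmetric convention $2\delta=+1$ (equivalently $\delta=\phalf$) was adopted in Section \ref{sec:bi} rather than the asymmetric $\delta=+1$; once this compatibility is confirmed the two independent normalizations are interchanged by the mirror isomorphism, and the absolute statement follows. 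Everything else is the verbatim bi-graded echo of Proposition \ref{prp:mirror}.
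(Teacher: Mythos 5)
Your proposal is correct and coincides with the paper's intended argument: the paper leaves the proof of this proposition to the reader, indicating only that it builds on Proposition \ref{prp:mirror}, and your bi-graded upgrade of that proof via $\Khred^{u,\delta}(L^*)\cong\Khred^{-u,-\delta}(L)$, applied to the squares relating $\iota_{-j}$ for $T^*_{K,h}$ and $\iota_j$ for $T_{K,h}$, is exactly what is meant. Your treatment of the one delicate point is also right, with the minor caveat that under mirroring the positive stabilizers of $T^*_{K,h}$ correspond to the \emph{negative}-twist stabilizers of $T_{K,h}$ (not its positive ones), which sit one diagonal below, at $2\delta=-1$ --- as one can see concretely in Figure \ref{fig:trefoil-calc}, where the $\circ$-diagonal surviving as $n\to-\infty$ lies one step below the $\bu$-diagonal --- so the symmetric normalization $2\delta=+1$ is interchanged rather than shifted, which is precisely why Section \ref{sec:bi} fixes $\delta=+\frac{1}{2}$ instead of $\delta=+1$.
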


We note that this $(\bZ\times\odd)$-graded invariant of strong inversions typically contains considerably more information than its $\bZ$-graded counterpart. For example, revisiting the quasi-amphicheiral knots of Section \ref{sec:amph} we have:

\[
\begin{tikzpicture}[scale=0.85]

\draw[step=.5,black] (0,0) grid (8,1.5);
 
\node at (-2,.75) {$\kh(10_{48})$};

\node at (0.25,1.25) {$1$}; 
\node at (0.75,1.25) {$3$}; 
\node at (1.25,1.25) {$4$}; 
\node at (1.75,1.25) {$5$}; 
\node at (2.25,1.25) {$6$}; 
\node at (2.75,1.25) {$5$}; 
\node at (3.25,1.25) {$4$}; 
\node at (3.75,1.25) {$3$}; 
\node at (4.25,1.25) {$1$};

\node at (2.25,0.75) {$2$}; 
\node at (2.75,0.75) {$5$}; 
\node at (3.25,0.75) {$6$}; 
\node at (3.75,0.75) {$8$}; 
\node at (4.25,0.75) {$9$}; 
\node at (4.75,0.75) {$7$}; 
\node at (5.25,0.75) {$6$}; 
\node at (5.75,0.75) {$4$}; 
\node at (6.25,0.75) {$1$}; 

\node at (4.25,0.25) {$1$}; 
\node at (4.75,0.25) {$2$}; 
\node at (5.25,0.25) {$2$}; 
\node at (5.75,0.25) {$3$}; 
\node at (6.25,0.25) {$3$}; 
\node at (6.75,0.25) {$2$}; 
\node at (7.25,0.25) {$2$}; 
\node at (7.75,0.25) {$1$}; 

\node at (-0.375,1.25) {\footnotesize{-$1$}}; 
\node at (-0.375,0.75) {\footnotesize{$1$}}; 
\node at (-0.375,0.25) {\footnotesize{$3$}}; 

\node at (0.25,-0.25) {\footnotesize{-$6$}}; 
\node at (0.75,-0.25) {\footnotesize{-$5$}}; 
\node at (1.25,-0.25) {\footnotesize{-$4$}}; 
\node at (1.75,-0.25) {\footnotesize{-$3$}}; 
\node at (2.25,-0.25) {\footnotesize{-$2$}}; 
\node at (2.75,-0.25) {\footnotesize{-$1$}}; 
\node at (3.25,-0.25) {\footnotesize{$0$}}; 
\node at (3.75,-0.25) {\footnotesize{$1$}}; 
\node at (4.25,-0.25) {\footnotesize{$2$}}; 
\node at (4.75,-0.25) {\footnotesize{$3$}}; 
\node at (5.25,-0.25) {\footnotesize{$4$}}; 
\node at (5.75,-0.25) {\footnotesize{$5$}}; 
\node at (6.25,-0.25) {\footnotesize{$6$}}; 
\node at (6.75,-0.25) {\footnotesize{$7$}}; 
\node at (7.25,-0.25) {\footnotesize{$8$}}; 
\node at (7.75,-0.25) {\footnotesize{$9$}}; 
\end{tikzpicture}\]

\[\begin{tikzpicture}[scale=0.85]

\draw[step=.5,black] (0,0) grid (11.5,2.5);
 
\node at (-2,1.25) {$\kh(10_{71})$};

\node at (0.25,2.25) {$1$}; 
\node at (0.75,2.25) {$2$}; 
\node at (1.25,2.25) {$2$}; 
\node at (1.75,2.25) {$3$}; 
\node at (2.25,2.25) {$3$}; 
\node at (2.75,2.25) {$2$}; 
\node at (3.25,2.25) {$2$}; 
\node at (3.75,2.25) {$1$}; 

\node at (2.25,1.75) {$3$}; 
\node at (2.75,1.75) {$6$}; 
\node at (3.25,1.75) {$5$}; 
\node at (3.75,1.75) {$9$}; 
\node at (4.25,1.75) {$8$}; 
\node at (4.75,1.75) {$5$}; 
\node at (5.25,1.75) {$6$}; 
\node at (5.75,1.75) {$2$}; 

\node at (4.25,1.25) {$4$}; 
\node at (4.75,1.25) {$7$}; 
\node at (5.25,1.25) {$6$}; 
\node at (5.75,1.25) {$11$}; 
\node at (6.25,1.25) {$9$}; 
\node at (6.75,1.25) {$6$}; 
\node at (7.25,1.25) {$7$}; 
\node at (7.75,1.25) {$2$};

\node at (6.25,0.75) {$3$}; 
\node at (6.75,0.75) {$4$}; 
\node at (7.25,0.75) {$4$}; 
\node at (7.75,0.75) {$7$}; 
\node at (8.25,0.75) {$5$}; 
\node at (8.75,0.75) {$4$}; 
\node at (9.25,0.75) {$4$}; 
\node at (9.75,0.75) {$1$};

\node at (8.25,0.25) {$1$}; 
\node at (8.75,0.25) {$1$}; 
\node at (9.25,0.25) {$1$}; 
\node at (9.75,0.25) {$2$}; 
\node at (10.25,0.25) {$1$}; 
\node at (10.75,0.25) {$1$}; 
\node at (11.25,0.25) {$1$};

\node at (-0.375,2.25) {\footnotesize{-$1$}}; 
\node at (-0.375,1.75) {\footnotesize{$1$}}; 
\node at (-0.375,1.25) {\footnotesize{$3$}}; 
\node at (-0.375,0.75) {\footnotesize{$5$}}; 
\node at (-0.375,0.25) {\footnotesize{$7$}}; 

\node at (0.25,-0.175) {\footnotesize{-$14$}}; 
\node at (0.75,-0.375) {\footnotesize{-$13$}}; 
\node at (1.25,-0.175) {\footnotesize{-$12$}}; 
\node at (1.75,-0.375) {\footnotesize{-$11$}}; 
\node at (2.25,-0.175) {\footnotesize{-$10$}}; 
\node at (2.75,-0.25) {\footnotesize{-$9$}}; 
\node at (3.25,-0.25) {\footnotesize{-$8$}}; 
\node at (3.75,-0.25) {\footnotesize{-$7$}}; 
\node at (4.25,-0.25) {\footnotesize{-$6$}}; 
\node at (4.75,-0.25) {\footnotesize{-$5$}}; 
\node at (5.25,-0.25) {\footnotesize{-$4$}}; 
\node at (5.75,-0.25) {\footnotesize{-$3$}}; 
\node at (6.25,-0.25) {\footnotesize{-$2$}}; 
\node at (6.75,-0.25) {\footnotesize{-$1$}}; 
\node at (7.25,-0.25) {\footnotesize{$0$}}; 
\node at (7.75,-0.25) {\footnotesize{$1$}}; 
\node at (8.25,-0.25) {\footnotesize{$2$}}; 
\node at (8.75,-0.25) {\footnotesize{$3$}}; 
\node at (9.25,-0.25) {\footnotesize{$4$}}; 
\node at (9.75,-0.25) {\footnotesize{$5$}}; 
\node at (10.25,-0.25) {\footnotesize{$6$}}; 
\node at (10.75,-0.25) {\footnotesize{$7$}}; 
\node at (11.25,-0.25) {\footnotesize{$8$}}; 

\end{tikzpicture}\]

\[\begin{tikzpicture}[scale=0.85]

\draw[step=.5,black] (0,0) grid (11,2.5);
 
\node at (-2,1.25) {$\kh(10_{104})$};

\node at (0.25,2.25) {$1$}; 
\node at (0.75,2.25) {$1$}; 
\node at (1.25,2.25) {$1$}; 
\node at (1.75,2.25) {$2$}; 
\node at (2.25,2.25) {$1$}; 
\node at (2.75,2.25) {$1$}; 
\node at (3.25,2.25) {$1$}; 

\node at (1.75,1.75) {$1$}; 
\node at (2.25,1.75) {$4$}; 
\node at (2.75,1.75) {$3$}; 
\node at (3.25,1.75) {$5$}; 
\node at (3.75,1.75) {$6$}; 
\node at (4.25,1.75) {$3$}; 
\node at (4.75,1.75) {$4$}; 
\node at (5.25,1.75) {$2$}; 

\node at (3.25,1.25) {$1$}; 
\node at (3.75,1.25) {$4$}; 
\node at (4.25,1.25) {$7$}; 
\node at (4.75,1.25) {$7$}; 
\node at (5.25,1.25) {$10$}; 
\node at (5.75,1.25) {$9$}; 
\node at (6.25,1.25) {$6$}; 
\node at (6.75,1.25) {$6$}; 
\node at (7.25,1.25) {$2$};

\node at (5.25,0.75) {$2$}; 
\node at (5.75,0.75) {$5$}; 
\node at (6.25,0.75) {$6$}; 
\node at (6.75,0.75) {$8$}; 
\node at (7.25,0.75) {$9$}; 
\node at (7.75,0.75) {$7$}; 
\node at (8.25,0.75) {$6$}; 
\node at (8.75,0.75) {$4$}; 
\node at (9.25,0.75) {$1$}; 

\node at (7.25,0.25) {$1$}; 
\node at (7.75,0.25) {$2$}; 
\node at (8.25,0.25) {$2$}; 
\node at (8.75,0.25) {$3$}; 
\node at (9.25,0.25) {$3$}; 
\node at (9.75,0.25) {$2$}; 
\node at (10.25,0.25) {$2$}; 
\node at (10.75,0.25) {$1$};

\node at (-0.375,2.25) {\footnotesize{-$5$}}; 
\node at (-0.375,1.75) {\footnotesize{-$3$}}; 
\node at (-0.375,1.25) {\footnotesize{-$1$}}; 
\node at (-0.375,0.75) {\footnotesize{$1$}}; 
\node at (-0.375,0.25) {\footnotesize{$3$}}; 

\node at (0.25,-0.25) {\footnotesize{-$4$}}; 
\node at (0.75,-0.25) {\footnotesize{-$3$}}; 
\node at (1.25,-0.25) {\footnotesize{-$2$}}; 
\node at (1.75,-0.25) {\footnotesize{-$1$}}; 
\node at (2.25,-0.25) {\footnotesize{$0$}}; 
\node at (2.75,-0.25) {\footnotesize{$1$}}; 
\node at (3.25,-0.25) {\footnotesize{$2$}}; 
\node at (3.75,-0.25) {\footnotesize{$3$}}; 
\node at (4.25,-0.25) {\footnotesize{$4$}}; 
\node at (4.75,-0.25) {\footnotesize{$5$}}; 
\node at (5.25,-0.25) {\footnotesize{$6$}}; 
\node at (5.75,-0.25) {\footnotesize{$7$}}; 
\node at (6.25,-0.25) {\footnotesize{$8$}}; 
\node at (6.75,-0.25) {\footnotesize{$9$}}; 
\node at (7.25,-0.25) {\footnotesize{$10$}}; 
\node at (7.75,-0.25) {\footnotesize{$11$}}; 
\node at (8.25,-0.25) {\footnotesize{$12$}}; 
\node at (8.75,-0.25) {\footnotesize{$13$}}; 
\node at (9.25,-0.25) {\footnotesize{$14$}}; 
\node at (9.75,-0.25) {\footnotesize{$15$}}; 
\node at (10.25,-0.25) {\footnotesize{$16$}}; 
 \node at (10.75,-0.25) {\footnotesize{$17$}}; 

\end{tikzpicture}\]

As this is apparently stronger information than the integer grading used to this point, it would be interesting to exhibit, for example, a quasi-amphicheiral knot for which determining the non-amphicheirality depends on this additional structure.  

\begin{remark}\label{rmk:closing}
As observed in the proof of Theorem \ref{thm:ten}, $\dim\kh(10_{71})=\dim\kh(10_{104})$; consulting the invariants above the number of $\delta$-gradings (i.e. the homological width) supporting these invariants coincide. It is interesting that certain aspects of $\kh(10_{71})$ and $\kh(10_{104})$  (particularly, integer-valued invariants derived from $\kh$) coincide given that $\Khred(10_{71})\cong\Khred(10_{104})$. The fact that $\kh(10_{71})$ and $\kh(10_{104})$ differ as $\delta$-graded groups (absolutely or relatively) and are therefore separated by $\kh$ provides another application of the gradings in Khovanov homology to distinguish this pair.
\end{remark}

\begin{footnotesize}
\subsection*{Acknowledgements}  I would like to thank  Ciprian Manolescu and Luisa Paoluzzi for inspiring conversations and helpful comments.  This work took shape at the Simons Center for Geometry and Physics during the program {\em Symplectic and Contact Geometry and Connections to Low-Dimensional Topology}. I would like to thank the organizers for providing a great working environment. 
\end{footnotesize}

\bibliographystyle{plain}
\bibliography{involutions}

\end{document}